			\definecolor{dark-gray}{gray}{0.1}
			\definecolor{dark-blue}{RGB}{0,0,80}
			\definecolor{light-gray}{gray}{0.9}
			\theoremstyle{definition}
				\newtheorem{definition}{Definition}[section]
				\newtheorem*{definition*}{Definition}
			\theoremstyle{plain}
				\newaliascnt{theorem}{definition} 								% provides alias counter: theorem=definition
				\newtheorem{theorem}[theorem]{Theorem}						% creates the theorem-environment. Parent counter is theorem
				\newtheorem*{theorem*}{Theorem}										% unnumbered theorems
				\newaliascnt{proposition}{definition}
				\newtheorem{proposition}[proposition]{Proposition}
				\newaliascnt{corollary}{definition}
				\newtheorem{corollary}[corollary]{Corollary}
				\newtheorem*{corollary*}{Corollary}
				\newaliascnt{lemma}{definition}
				\newtheorem{lemma}[lemma]{Lemma}
				\newtheorem*{lemma*}{Lemma}
				\newtheorem{bigtheorem}{Theorem}
			\theoremstyle{remark}
				\newtheorem*{remark}{Remark}
				\newtheorem*{remarks}{Remarks}
				\newtheorem*{examples}{Examples}
\newcommand{\del}{\partial}
\DeclareMathOperator{\Tr}{Tr}
\DeclareMathOperator{\tr}{tr}
\newcommand{\after}{\circ}
\newcommand{\abs}[1]{\left| #1 \right|}
\newcommand{\norm}[1]{\left\lVert #1 \right\rVert}
\newcommand{\set}[1]{\left\{\ #1 \ \right\} } 							% set notation. puts a set of objects into curly bracket
\newcommand{\suchthat}{\mathrel{}\middle|\mathrel{}}		% set notation. automatically scales with brackets if used
\newcommand{\hodge}{\star}
\DeclareMathOperator{\vol}{vol}
\DeclareMathOperator{\Ric}{Ric}
\newcommand\Restrict[2]{{\left. \kern-\nulldelimiterspace #1 \right\vert_{#2} }}								 % scaling version
\newcommand{\Z}{\mathbb{Z}} 		% integers Z
\newcommand{\R}{\mathbb{R}} 		% real numbers R
\DeclareMathOperator{\ad}{ad}
\let\div\relax
\DeclareMathOperator{\div}{div}
\let\savemathfrak\mathfrak											% save \mathfrak for later
\let\mathfrak\savemathfrak											% reassign \mathfrak from amsfonts, because Libertinus does not include one
\DeclareMathAlphabet\mathcal{OMS}{cmsy}{m}{n}		% reload \mathcal alphabet from computer modern font, because I like it more
\title{Growth of the Higgs Field for Kapustin-Witten solutions\\ on ALE and ALF gravitational instantons}
\begin{document}

\makeatletter
\renewcommand*{\thefootnote}{\fnsymbol{footnote}}

\thispagestyle{empty}

{

\setstretch{1.3}
\Large\bfseries\scshape
\@title

}

\vspace{1em}

Michael Bleher\footnotemark[2] \\
{\small\itshape
Institute for Mathematics, Heidelberg University, Im Neuenheimer Feld 205, Heidelberg, Germany.
}

\footnotetext[2]{mbleher@mathi.uni-heidelberg.de}

\renewcommand*{\thefootnote}{\arabic{footnote}}
\makeatother

\paragraph{Abstract.}
The $\theta$-Kapustin-Witten equations are a family of equations for a connection $A$ on a principal $G$-bundle $E \to W^4$ and a one-form $\phi$, called the Higgs field, with values in the adjoint bundle $\ad E$.
They give rise to second-order partial differential equations that can be studied more generally on Riemannian manifolds $W^n$ of dimension $n$.
For $G=SU(2)$, we report a dichotomy that is satisfied by solutions of the second-order equations on Ricci-flat ALX spaces with sectional curvature bounded from below.
This dichotomy was originally established by Taubes for $W^n=\R^n$; the alternatives are: either the asymptotic growth of the averaged norm of the Higgs field over geodesic spheres is larger than a positive power of the radius, or the commutator $[\phi\wedge\phi]$ vanishes everywhere.
As a consequence, we are able to confirm a conjecture by Nagy and Oliveira, namely, that finite energy solutions of the $\theta$-Kapustin-Witten equations on ALE and ALF gravitational instantons with $\theta\neq 0$ are such that $[\phi\wedge\phi]=0$, $\nabla^A \phi=0$, and $A$ is flat.

\section{Introduction}
Let $G=SU(2)$ and consider a principal $G$-bundle $E$ over a complete Riemannian manifold $(W^n, g)$ of dimenson $n$.
Throughout, we assume that $W^n$ is an ALX manifold.
Suffice it to say for now that we take this to mean $W^n$ is a non-compact manifold with fibered ends such that the $k$-dimensional fibers have bounded volume.
Consequently, the volume of geodesic balls asymptotically grows like $r^{n-k}$.

Denote by $(A,\phi) \in \mathcal{A}(E) \times \Omega^1(W^n, \ad E)$ a pair consisting of a connection on $E$ and an $\ad E$-valued one-form.
We write $\hodge$ for the Hodge star operator and equip $\Omega^k(W^n, \ad E)$ with the density-valued inner product ${\langle a,b \rangle = \Tr a \wedge\hodge b}$.
Upon integration this provides the usual $L^2$-product $\langle a, b\rangle_{L^2(W)} = \int_{W^n} \langle a , b \rangle$ on $\Omega^k(W^n, \ad E)$.
Throughout, we assume that $A$ and $\phi$ have enough derivatives and are locally square-integrable.

In this article we report on a property of the pair $(A,\phi)$ whenever it satisfies the following second order differential equation.
\begin{align} \label{eq:dichotomy-main-assumption}
	{\nabla^A}^\dagger \nabla^A \phi + \frac{1}{2} \hodge [ \hodge [\phi \wedge \phi] \wedge \phi] + \Ric \phi = 0 \ .
\end{align}
Here ${\nabla^A}^\dagger$ is the formal adjoint of $\nabla^A$ with respect to the $L^2$-product and the Ricci curvature is viewed as an endomorphism of $\Omega^1(W, \ad E)$.

The differential equation~\eqref{eq:dichotomy-main-assumption} is of particular relevance in the context of the Kapustin-Witten equations.
To see this, consider for the moment the case of a four-manifold $W^4$ and define the Laplace-type differential operator on $\Omega^1(W, \ad E)$:
\begin{align}
	\tilde \Delta_{A}(\phi) = - d_A d_A^\hodge \phi + \hodge 2 d_A ( d_A \phi)^- \ ,
\end{align}
where $d_A^\hodge= \hodge d_A \hodge$ is the usual codifferential and $(\cdot)^\pm$ denotes the (anti-)self-dual part of a given two-form.
Compare this operator with the $\theta$-Kapustin-Witten equations for $(A,\phi)$, which are given by
\begin{align}
	\big( \cos \tfrac{\theta}{2}\ ( F_A - \tfrac{1}{2} [\phi \wedge \phi] ) - \sin \tfrac{\theta}{2}\ d_A \phi\ \big)^+ &= 0 \\
	\big( \sin \tfrac{\theta}{2}\ ( F_A - \tfrac{1}{2} [\phi \wedge \phi] ) + \cos \tfrac{\theta}{2}\ d_A \phi\ \big)^- &= 0 \\
	d_A^{\hodge} \phi\ &= 0 \ .
\end{align}
Clearly, if $(A,\phi)$ is a solution of the $\theta = 0$ version of the Kapustin-Witten equations, then $\tilde \Delta_{A} \phi = 0$.
Moreover, using a Bochner-Weitzenböck identity that relates $\tilde \Delta_A$ and the Bochner Laplacian ${\nabla^A}^\dagger \nabla^A$, as well as the remaining part of the $0$-Kapustin-Witten equations $F_A^+ = [\phi\wedge\phi]^+$, one finds that harmonicity of $\phi$ with respect to $\tilde \Delta_A$ is equivalent to equation~\eqref{eq:dichotomy-main-assumption}.
In fact a very similar argument shows that the same is true if $(A,\phi)$ is a solution of the $\theta$-Kapustin-Witten equations \cite{Taubes2013, Taubes2017a, Nagy2021}.

The Kapustin-Witten equations arise from a family of topologically twisted, four-dimensional, supersymmetric gauge theories that exhibit surprisingly deep connections to several complementary areas of mathematics.
They were first studied by Kapustin and Witten in the context of the geometric Langlands program~\cite{Kapustin2007}.
Witten later realized that solutions of the Kapustin-Witten equations are also related to topological invariants of knots, specifically to Khovanov homology and its generalizations~\cite{Witten2011, Gaiotto2012a}.
Since then the moduli space of solutions to the Kapustin-Witten equations has been subject to extensive study \cite{Taubes2013, Taubes2017a, Taubes2018, He2015, He2019d, He2019a, Tanaka2019, Taubes2019, Taubes2021}.
\begin{remark}
Note that we use a slightly different normalization than is otherwise common in the literature: our $\theta$ coincides with $2\theta_{GU}$ in \cite{Gagliardo2012}.
Though this is mostly a matter of taste, when viewed as dimensional reductions of the five-dimensional Haydys-Witten equations, the normalization used here has a geometric interpretation as incidence angle between Haydys' preferred vector field and the hyperplane orthogonal to the reduction.
\end{remark}

Let us now return to general $n$-manifolds.
In what follows, we are guided by the intuition that if $\phi$ satisfies \eqref{eq:dichotomy-main-assumption}, then it is harmonic with respect to some well-behaved Laplace-type operator.
In particular, one should expect that it satisfies an appropriate analogue of the mean-value principle.
Hence, fix some point $p\in W^4$ and denote by $B_r$ the closed geodesic ball of radius $r$ centered at $p$.
Consider the non-negative function $\kappa$ defined on $[0,\infty)$ by
\begin{align}
	\kappa^2(r)
	= \frac{1}{r^{n-k-1}} \int_{\del B_r} \norm{\phi}^2 \ .
\end{align}
As a consequence of the asymptotic volume growth on $W^n$, $\kappa(r)$ is related to the average value of $\norm{\phi}$ on geodesic spheres $\del B_r$ with large radius $r$.
The mean-value principle for Laplace-type differential operators then suggests that $\phi$ should satisfy an inequality of the form $\norm{\phi(p)} \leq \kappa(r)$ for $r>0$.
Although contributions from non-trivial curvature in the interior of $B_r$ in general preclude this naive mean-value inequality, the controlled asymptotics of ALX spaces retains enough control to deduce analogous bounds for points that are far away from $p$.

A classical consequence of the mean-value principle is a relation between the asymptotic behaviour of $\kappa$ at large radius and the values of $\phi$ in the interior of the ball.
For example, if the naive mean-value inequality was satisfied at every point $p \in W^n$ and $\kappa(r) \to 0$ as $r \to \infty$, then $\phi$ would be identically zero everywhere.

For $W^n = \R^n$, a result by Taubes generalizes this kind of statement to a dichotomy between the growth of $\kappa(r)$ at infinity and the vanishing of $[\phi\wedge\phi]$ on all of $W^n$ \cite{Taubes2017a}.
Here we prove that this dichotomy holds more generally if $W^n$ is an ALX gravitational instanton (this is \autoref{thm:dichotomy} below):
\begin{bigtheorem} \label{bigthm:dichotomy}
Let $W^n$ be a complete, Ricci flat ALX manifold of dimension $n\geq 2$ with asymptotic fibers of dimension $k \leq n-1$ and sectional curvature bounded from below.
Consider $(A,\phi)$ as above and assume the pair satisfies the second-order differential equation~\eqref{eq:dichotomy-main-assumption}.
Then either
\begin{enumerate}
	\item there is an $a>0$ such that $\liminf_{r\to\infty} \frac{\kappa(r)}{r^a} > 0$, or
	\item $[\phi \wedge \phi]=0$.
\end{enumerate}
\end{bigtheorem}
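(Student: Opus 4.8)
The plan is to run an Almgren-type frequency-function argument, adapted to the fibered asymptotics of $W^n$. One first records the basic differential identities. Pairing \eqref{eq:dichotomy-main-assumption} with $\phi$: the Ricci term drops out since $W^n$ is Ricci flat, and a pointwise identity (for $\mathfrak g=\mathfrak{su}(2)$) gives $\langle\hodge[\hodge[\phi\wedge\phi]\wedge\phi],\phi\rangle=c_0\norm{[\phi\wedge\phi]}^2$ for a constant $c_0>0$, so the Bochner formula yields $\tfrac12\Delta\norm{\phi}^2=\norm{\nabla^A\phi}^2+\tfrac{c_0}{2}\norm{[\phi\wedge\phi]}^2\ge0$. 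Hence $\norm{\phi}^2$ is subharmonic, strictly so wherever $[\phi\wedge\phi]\ne0$. Integrating over a geodesic ball $B_r$ gives the energy identity $\int_{\del B_r}\partial_\nu\norm{\phi}^2=2D(r)$ with $D(r):=\int_{B_r}\big(\norm{\nabla^A\phi}^2+\tfrac{c_0}{2}\norm{[\phi\wedge\phi]}^2\big)$; and the maximum principle for the subharmonic function $\norm{\phi}^2$, together with unique continuation for \eqref{eq:dichotomy-main-assumption}, shows that either $\phi\equiv0$ — in which case $[\phi\wedge\phi]=0$ and (ii) holds — or $\phi|_{\del B_r}\not\equiv0$ for every $r>0$, so $H(r):=\int_{\del B_r}\norm{\phi}^2>0$ for all $r>0$.

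Assume henceforth $\phi\not\equiv0$ and set $N(r):=rD(r)/H(r)$. The core of the proof is an almost-monotonicity statement: for each $r_0>0$ there is $E\in L^1([r_0,\infty))$ with $N'(r)\ge-E(r)\,N(r)$ on $[r_0,\infty)$, so that if $N(r_0)>0$ then $N(r)\ge\lambda\,N(r_0)>0$ for all $r\ge r_0$, where $\lambda:=\exp\big(-\int_{r_0}^\infty E\big)$. This combines three ingredients. First, the first-variation-of-area formula gives $H'(r)=2D(r)+\int_{\del B_r}\mathcal H_r\norm{\phi}^2$, and on an ALX manifold the mean curvature of the geodesic sphere obeys $\mathcal H_r=\tfrac{n-k-1}{r}+O(r^{-1-\tau})$ for large $r$ — the non-expanding fibers contributing nothing to leading order, while Ricci-flatness and the sectional curvature lower bound control $\mathrm{Hess}\,r$ and the cut locus — so that $\tfrac{d}{dr}\log\kappa^2(r)=\tfrac{2N(r)}{r}+\mathrm{err}_1(r)$ with $\mathrm{err}_1$ integrable. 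Second, a Rellich--Pohozaev identity for the radial field $r\partial_r=\nabla(r^2/2)$, in which \eqref{eq:dichotomy-main-assumption} is used to substitute for ${\nabla^A}^\dagger\nabla^A\phi$, gives $D'(r)=\tfrac{n-k-2}{r}D(r)+2\int_{\del B_r}\norm{\nabla^A_\nu\phi}^2+\mathrm{err}_2(r)\,D(r)$; here $\mathrm{err}_2$ absorbs the defect of $\mathrm{Hess}(r^2/2)$ from the Euclidean-fibered model — which, after the effective dimension $n-k$ is split off, leaves an integrable curvature error together with a fiber-derivative term of definite sign — and the cross term between the stress tensor and $\hodge[\hodge[\phi\wedge\phi]\wedge\phi]$, which one rewrites, as in Taubes' treatment of the case $W^n=\R^n$ \cite{Taubes2017a} and using the $\mathfrak{su}(2)$ structure, as a total derivative whose boundary contribution is dominated by $D'(r)$. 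Third, Cauchy--Schwarz in the energy identity, $D(r)=\int_{\del B_r}\langle\nabla^A_\nu\phi,\phi\rangle\le\big(\int_{\del B_r}\norm{\nabla^A_\nu\phi}^2\big)^{1/2}H(r)^{1/2}$, gives $\int_{\del B_r}\norm{\nabla^A_\nu\phi}^2\ge D(r)^2/H(r)$. Substituting all three into $\tfrac{N'}{N}=\tfrac1r+\tfrac{D'}{D}-\tfrac{H'}{H}$, the dimensional constants and the two copies of $2N(r)/r$ cancel exactly, leaving $\tfrac{N'(r)}{N(r)}\ge-\mathrm{err}_1(r)-\mathrm{err}_2(r)=:-E(r)$.

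The dichotomy now follows. If (ii) fails then $[\phi\wedge\phi]\not\equiv0$; choose $r_0$ so large that $\int_{B_{r_0}}\norm{[\phi\wedge\phi]}^2>0$, whence $D(r_0)\ge\tfrac{c_0}{2}\int_{B_{r_0}}\norm{[\phi\wedge\phi]}^2>0$ and $N(r_0)>0$ (note $D$ is non-decreasing, so $N>0$ on $[r_0,\infty)$). Almost-monotonicity gives $N(r)\ge\lambda N(r_0)=:a>0$ for all $r\ge r_0$, hence $\tfrac{d}{dr}\log\kappa^2(r)\ge\tfrac{2a}{r}-\abs{\mathrm{err}_1(r)}$; integrating from $r_0$ and using $\kappa^2(r_0)>0$ with $\mathrm{err}_1\in L^1$ gives $\kappa^2(r)\ge C\,r^{2a}$ for all $r\ge r_0$, with $C>0$. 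Therefore $\liminf_{r\to\infty}\kappa(r)/r^{a}\ge\sqrt C>0$, which is alternative (i).

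The main obstacle, I expect, is the almost-monotonicity of $N$, and within it the Rellich--Pohozaev step: running that identity on a curved, fibered background, correctly isolating the effective dimension $n-k$, controlling the fiber-derivative and cubic cross terms, and checking that every curvature error is genuinely integrable in $r$ — which is precisely where the ALX decay, Ricci-flatness, and the sectional curvature lower bound are needed. The non-smoothness of geodesic spheres at the cut locus is a secondary technical point, handled as usual since the cut locus is a null set and the distance function is Lipschitz.
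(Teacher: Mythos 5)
Your strategy — derive an integrable lower bound $N'(r)\ge -E(r)N(r)$ for an Almgren frequency function built from $D(r)=\int_{B_r}\bigl(\lVert\nabla^A\phi\rVert^2+\norm{[\phi\wedge\phi]}^2\bigr)$, conclude $N$ is bounded below by a positive constant, and then integrate — is not the argument the paper runs, and I believe the central step fails. The obstruction sits in the Rellich--Pohozaev identity, and it is present already on $W^n=\R^n$, so it is not a matter of controlling ALX curvature errors. When you pair \eqref{eq:dichotomy-main-assumption} with $r\del_r\phi$, the Bochner term produces the familiar scaling factor $n-k-2$ in front of $\int_{B_r}\lVert\nabla^A\phi\rVert^2$. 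But the cubic term contributes, after integrating $r\del_r$ of the quartic potential by parts, a bulk piece proportional to $-\div(r\del_r)\int_{B_r}\norm{[\phi\wedge\phi]}^2$, i.e. a factor that approaches $n-k$. This dimensional mismatch between the gradient part and the potential part of $D$ is not a curvature error and does not decay: it leaves a defect in $D'/D$ of order $c/r$, which is not integrable. Your proposed rewriting of the cross term "as a total derivative whose boundary contribution is dominated by $D'(r)$" handles the boundary piece, but ignores the bulk piece of the wrong weight. The honest conclusion is the one the paper records in \autoref{prop:dichotomy-small-N} and \eqref{eq:dichotomy-derivative-of-N}: only the far weaker $\tfrac{dN}{dr}\ge -\tfrac{n N}{r}$ (valid when $N\le 1$), i.e. $N$ may decay like a power of $r$. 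That is compatible with $N(r)\to 0$, so it does not give a positive lower bound $a$ for $N$, and your final integration step $\kappa^2\gtrsim r^{2a}$ has no starting point.

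This is precisely why the paper, following Taubes, does something much more roundabout. It never asserts monotonicity of $N$. Instead it shows (\autoref{lem:dichotomy-frequency-function-bounds}, \autoref{lem:dichotomy-weighted-frequency-function-bounds}) that \emph{slow} growth of $\kappa$ between two radii forces $N$ to be small on a whole subinterval; it then brings in the mean-value a priori bounds (\autoref{lem:dichotomy-a-priori-bounds}), the correlation tensor $T$ to resolve the $\mathfrak{su}(2)$ components of $\phi$, and the identity $\abs{[\phi_u,\phi_v]}^2=4\abs{\phi_u}^2\abs{\phi_v}^2-4\Tr(\phi_u\phi_v)^2$, culminating in the contradiction $\kappa_v^2(t)\le 4C\,a^{1/16}\kappa_v^2(t)$ for small $a$. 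None of this would be necessary if your almost-monotonicity held: even the original $\R^n$ case would then admit a half-page proof, which is strong circumstantial evidence that the inequality $N'\ge -E\,N$ with $E\in L^1$ is simply false here. A useful sanity check is to run your Pohozaev computation carefully on $\R^n$ and track the coefficient on $\int_{B_r}\norm{[\phi\wedge\phi]}^2$; you will find the uncancelled $O(1/r)$ term.

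Two smaller points. First, your use of the maximum principle plus "unique continuation" to get $H(r)>0$ for all $r>0$ is stronger than what the paper establishes; \autoref{lem:dichotomy-unique-continuation} only gives unique continuation from large radii, and the compactly supported case must be excluded separately, as the paper does at the start of the proof of \autoref{thm:dichotomy} by showing directly that $\kappa\equiv 0$ near infinity together with $[\phi\wedge\phi]\not\equiv 0$ contradicts the formula for $d\kappa^2/dr$. Second, the statement that $D$ is non-decreasing is fine, but $N=rD/H$ non-decreasing does not follow from it.
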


If the fields $(A,\phi)$ are solutions of the $\theta$-Kapustin-Witten equations and have square-integrable field strength we can say slightly more (cf. \autoref{thm:dichotomy2}).
\begin{bigtheorem} \label{bigthm:dichotomy2}
Let $W^4$ be a complete, Ricci flat ALX manifold of dimension $4$ with asymptotic fibers of dimension $k \leq 3$ and sectional curvature bounded from below.
Assume $(A,\phi)$ are solutions of the $\theta$-Kapustin-Witten equations and satisfy $\int_{W^4} \norm{F_A}^2 < \infty$, then either
\begin{enumerate}
	\item there is an $a>0$ such that $\liminf_{r\to\infty} \frac{\kappa(r)}{r^a} > 0$, or
	\item $[\phi \wedge \phi]=0$, $\nabla^A \phi = 0$, and $A$ is self-dual if $\theta = 0$, flat if $\theta \in (0,\pi)$, and anti-self-dual if $\theta = \pi$.
\end{enumerate}
\end{bigtheorem}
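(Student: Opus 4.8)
The plan is to invoke \autoref{bigthm:dichotomy} and then, in the non-trivial case, promote its conclusion $[\phi\wedge\phi]=0$ to alternative (ii), using the algebraic structure of the $\theta$-Kapustin-Witten equations together with the finite curvature energy.

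First I would observe that a solution of the $\theta$-Kapustin-Witten equations satisfies the second-order equation~\eqref{eq:dichotomy-main-assumption}, as recalled in the introduction, so \autoref{bigthm:dichotomy} applies with $n=4$: either alternative (i) holds, and we are done, or $[\phi\wedge\phi]=0$. Assume the latter. Substituting $[\phi\wedge\phi]=0$ and $\Ric=0$ into~\eqref{eq:dichotomy-main-assumption} leaves ${\nabla^A}^\dagger\nabla^A\phi=0$, so that $\phi$ is Bochner-harmonic and $\norm{\phi}^2$ is subharmonic on $W^4$.

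The heart of the matter is to upgrade ${\nabla^A}^\dagger\nabla^A\phi=0$ to $\nabla^A\phi=0$. Integrating this equation against $\phi$ over a geodesic ball $B_r$ gives $\int_{B_r}\norm{\nabla^A\phi}^2=\int_{\del B_r}\langle\nabla^A_\nu\phi,\phi\rangle$, so everything comes down to making the boundary term vanish along a sequence $r_j\to\infty$; this is where both remaining hypotheses enter. On the one hand, the failure of alternative (i) forces $\kappa(r)$ to grow more slowly than every positive power of $r$, so $\int_{\del B_r}\norm{\phi}^2=r^{n-k-1}\kappa^2(r)$ is subpolynomial up to the fixed power $r^{n-k-1}$. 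On the other hand, for $\theta\in(0,\pi)$ the Kapustin-Witten equations with $[\phi\wedge\phi]=0$ express $(d_A\phi)^{\pm}$ as fixed nonzero multiples of $(F_A)^{\pm}$, whence $\norm{d_A\phi}_{L^2}\leq C_\theta\norm{F_A}_{L^2}<\infty$; combined with $d_A^{\hodge}\phi=0$ and the Weitzenböck formula on the Ricci-flat $W^4$, this should give enough control on $\nabla^A\phi$ near the end that $\int_{\del B_r}\norm{\nabla^A\phi}^2\to 0$ along some $r_j\to\infty$, after which Cauchy--Schwarz on $\del B_{r_j}$ kills the boundary term and forces $\nabla^A\phi\equiv 0$, and hence also $d_A\phi=0$ and $d_A^{\hodge}\phi=0$. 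At the two endpoints $\theta\in\{0,\pi\}$ one chirality of $(F_A)^{\pm}$ and one of $(d_A\phi)^{\pm}$ already vanish by the equations, and I would combine this with $\int_{W^4}\norm{F_A}^2<\infty$ and interior elliptic estimates for $d_A\phi$ along the ALX end to reach the same conclusion.

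Finally, with $d_A\phi=0$, $d_A^{\hodge}\phi=0$ and $[\phi\wedge\phi]=0$ the $\theta$-Kapustin-Witten equations reduce to $\cos\tfrac{\theta}{2}\,(F_A)^{+}=0$ and $\sin\tfrac{\theta}{2}\,(F_A)^{-}=0$, the third equation being automatically satisfied; reading off which part of the curvature must vanish yields exactly the self-duality, flatness, or anti-self-duality of $A$ claimed for $\theta=0$, $\theta\in(0,\pi)$, and $\theta=\pi$. Of these steps, the appeal to \autoref{bigthm:dichotomy} and the final algebraic reduction are routine; the main obstacle is the integration-by-parts step, i.e. exhibiting radii $r_j\to\infty$ along which $\int_{\del B_{r_j}}\langle\nabla^A_\nu\phi,\phi\rangle$ genuinely vanishes. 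Subpolynomial growth of $\kappa$ alone does not suffice when the asymptotic fiber dimension $k$ is small --- in the ALE case $k=0$ it gives only $\int_{B_r}\norm{\phi}^2=O(r^{4+\varepsilon})$, far from the $o(r^2)$ a naive Caccioppoli argument would want --- and it is precisely this gap that brings the finite curvature energy into play, via the Kapustin-Witten relations between $d_A\phi$ and $F_A$ and a Weitzenböck bootstrap.
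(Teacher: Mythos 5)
Your outline has the right top-level shape — invoke \autoref{bigthm:dichotomy}, reduce to the case $[\phi\wedge\phi]=0$, prove $\nabla^A\phi=0$, then read off the (anti-)self-duality or flatness of $A$ from the Kapustin-Witten equations — but the central step, proving $\nabla^A\phi=0$, is left as an acknowledged gap ("this should give enough control", "it is precisely this gap that brings the finite curvature energy into play"), and the route you sketch toward closing it is not the one the paper takes and will not close as stated.

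The paper never runs the Caccioppoli/boundary-decay argument you propose, precisely because, as you correctly observe, the boundary term $\int_{\del B_r}\langle\nabla^A_\nu\phi,\phi\rangle$ is not controllable from subpolynomial growth of $\kappa$ alone. For $\theta\in\{0,\pi\}$ your sketch is especially problematic: at $\theta=0$ the equations give only $(d_A\phi)^-=0$ and $d_A^\hodge\phi=0$, so there is no Kapustin-Witten relation linking $(d_A\phi)^+$ to $F_A$, and the finite energy of $F_A$ gives you nothing about $\nabla^A\phi$. (Consistent with this, the in-body restatement \autoref{thm:dichotomy2} only imposes $\int\norm{F_A}^2<\infty$ when $\theta\not\equiv 0,\pi$.) What the paper does instead is a two-stage argument with two ingredients you have not identified. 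First, for $\theta\in(0,\pi)$ it normalizes to $\theta=\pi/2$ and uses $[\phi\wedge\phi]=0$ to observe that $\hat A:=A+\phi$ is self-dual with $\int\lVert F_{\hat A}\rVert^2<\infty$; Uhlenbeck compactness then gives curvature decay $\lVert F_{\hat A}\rVert\lesssim r^{k-4}$, and a Stokes-theorem identity $\int_{B_r}\lVert F_{\hat A}\rVert^2 = \int_{\del B_r}\Tr F_{\hat A}\wedge\phi$ together with the slow growth of $\kappa$ forces $F_{\hat A}=0$, reducing to the endpoint case. Second, with one chirality of $d_A\phi$ (or $d_{\hat A}\phi$) gone and $d_A^\hodge\phi=0$, the paper factors $\phi=\omega\otimes\sigma$ at points where $\phi\neq 0$ (possible since $[\phi\wedge\phi]=0$), deduces $\nabla^A\sigma=0$ together with $(d\omega)^-=0=d^\hodge\omega$, invokes Taubes' theorem that the zero locus of the $\Z/2$ harmonic spinor $\omega$ has Hausdorff dimension $2$ so that $\sigma$ extends globally by parallel transport, and finally applies Yau's Liouville theorem to the scalar subharmonic function $\norm{\omega}^2$ to conclude $d\omega=0$ and hence $\nabla^A\phi=0$. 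These two ideas — the Uhlenbeck compactification of $\hat A$ and the reduction to a scalar subharmonic function via the $\omega\otimes\sigma$ decomposition and the $\Z/2$ harmonic spinor zero-locus theorem — are exactly what replaces your Caccioppoli argument, and are absent from your proposal.
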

As an immediate consequence of \autoref{bigthm:dichotomy2} we are able to confirm a conjecture of Nagy and Oliveira.
For this we introduce the Kapustin-Witten energy
\begin{align}
	E_{\text{KW}} = \int_{W^4} \left( \norm{F_A}^2 + \norm{\nabla^A \phi}^2 + \norm{[\phi\wedge\phi]}^2 \right) \ .
\end{align}
\begin{corollary*}[Nagy-Oliveira Conjecture \cite{Nagy2021}] \label{cor:nagy-oliveira-conjecture}
Let $(A,\phi)$ be a finite energy solution of the $\theta$-Kapustin-Witten equations with $\theta\neq 0 \pmod{\pi}$ on an ALE or ALF gravitational instanton and let $G=SU(2)$.
Then $A$ is flat, $\phi$ is $\nabla^A$-parallel, and $[\phi \wedge \phi] = 0$.
\end{corollary*}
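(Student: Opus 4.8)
The plan is to deduce the corollary from \autoref{bigthm:dichotomy2} by using the full finite‑energy hypothesis to exclude the first alternative of that dichotomy. First I would check the hypotheses: ALE and ALF gravitational instantons are complete, Ricci‑flat (indeed hyperkähler) four‑manifolds, their curvature tensor decays at infinity so the sectional curvature is in particular bounded from below, and their asymptotic fibers have dimension $k=0$ (ALE) or $k=1$ (ALF), so $k\le 3$. A finite‑energy solution satisfies $\int_{W^4}\norm{F_A}^2\le E_{\text{KW}}<\infty$, hence \autoref{bigthm:dichotomy2} applies. Next I would dispose of the range of $\theta$: the Kapustin‑Witten equations are invariant under $(\theta,A,\phi)\mapsto(-\theta,A,-\phi)$, so when $\theta\neq 0\pmod{\pi}$ we may assume $\theta\in(0,\pi)$, in which case the conclusion of the second alternative of \autoref{bigthm:dichotomy2} is exactly $[\phi\wedge\phi]=0$, $\nabla^A\phi=0$ and $A$ flat — i.e.\ the statement of the corollary. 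Everything therefore reduces to ruling out the first alternative, i.e.\ to showing that $\kappa$ grows slower than every positive power of $r$.

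For this I would exploit the two remaining pieces of $E_{\text{KW}}$. Testing equation~\eqref{eq:dichotomy-main-assumption} against $\phi$ and integrating over the geodesic ball $B_r$, Green's identity for the Bochner Laplacian together with $\Ric\equiv 0$ gives
\[
  \int_{\del B_r}\langle\nabla^A_\nu\phi,\phi\rangle \;=\; \int_{B_r}\norm{\nabla^A\phi}^2 \;+\; \tfrac12\int_{B_r}\langle\hodge[\hodge[\phi\wedge\phi]\wedge\phi],\phi\rangle ,
\]
where $\nu$ is the outward unit normal. The pointwise identity $\langle\hodge[\hodge[\phi\wedge\phi]\wedge\phi],\phi\rangle=\norm{[\phi\wedge\phi]}^2$ (valid for any $\ad E$‑valued one‑form) turns the right‑hand side into a nonnegative, nondecreasing function $g(r)$ with $g(r)\le E_{\text{KW}}$. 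On the other hand, differentiating $f(r):=\int_{\del B_r}\norm{\phi}^2$ along radial geodesics and using the first variation of the induced volume element yields $f'(r)=2g(r)+\int_{\del B_r}H\norm{\phi}^2$, where $H$ is the mean curvature of $\del B_r$.

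Now I would bring in the ALX asymptotics. Because the curvature of $W^4$ decays at infinity, the geodesic spheres $\del B_r$ carry $n-k-1$ asymptotically linearly expanding directions and a $k$‑dimensional collapsing part, so $H(r,\cdot)=\bigl(n-k-1+o(1)\bigr)/r$ uniformly on $\del B_r$ as $r\to\infty$. Writing $m=n-k-1$ and $\kappa^2(r)=r^{-m}f(r)$, the two facts above combine to the differential inequality $(r^{-m}f(r))'\le 2\,r^{-m}g(r)+\tfrac{o(1)}{r}\,\kappa^2(r)\le 2E_{\text{KW}}\,r^{-m}+\tfrac{o(1)}{r}\,\kappa^2(r)$. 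For ALE and ALF we have $m\ge 2>1$, so $\int^{\infty}r^{-m}\,dr<\infty$ and a Grönwall argument gives $\kappa^2(r)=r^{o(1)}$; in particular $\liminf_{r\to\infty}\kappa(r)/r^a=0$ for every $a>0$. This contradicts the first alternative of \autoref{bigthm:dichotomy2}, so the second must hold, which proves the corollary.

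The step I expect to be the main obstacle is the mean‑curvature estimate $H(r,\cdot)=(n-k-1+o(1))/r$, uniformly over the sphere: one has to control the geometry where the asymptotic fibration collapses, since the collapsing fiber directions contribute a vanishing amount to $H$ — this is precisely why the exponent is $n-k-1$ rather than $n-1$, and why the hypothesis $k\le 1$ for ALE and ALF (giving $m=n-k-1>1$, so that the differential inequality closes) is exactly what is needed. This is a statement about the geometry of $W^4$ alone and should follow from the asymptotic model underlying the ALX hypothesis. The remaining ingredients — almost‑everywhere differentiability of $f$ and validity of Green's identity on $B_r$ for generic $r$, and the pointwise $\mathfrak{su}(2)$ identity used above — are routine given the regularity assumed of $(A,\phi)$.
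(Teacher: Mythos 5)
Your argument is correct, but it takes a genuinely different route from the paper's. The paper simply cites Nagy and Oliveira's Main Theorem~1, which under the finite-energy hypothesis gives a pointwise $L^\infty$ bound on $\norm{\phi}$; boundedness of $\phi$ immediately forces $\kappa$ to be bounded and hence $\liminf_{r\to\infty}\kappa(r)/r^a=0$ for every $a>0$, so alternative \emph{(ii)} of \autoref{bigthm:dichotomy2} applies. You instead bypass the Nagy--Oliveira $L^\infty$ estimate entirely and derive the slow-growth condition directly from the finiteness of the $\nabla^A\phi$ and $[\phi\wedge\phi]$ parts of $E_{\text{KW}}$: writing $f(r)=\int_{\del B_r}\norm{\phi}^2$ and noting (as in the computation behind \autoref{prop:dichotomy-derivative-of-kappa}) that $f'(r)=2g(r)+\int_{\del B_r}\Delta r\,\norm{\phi}^2$ with $g(r)=\int_{B_r}\big(\norm{\nabla^A\phi}^2+\norm{[\phi\wedge\phi]}^2\big)\le E_{\text{KW}}$, you obtain $(\kappa^2)'\le 2E_{\text{KW}}\,r^{-m}+2|D|\,\kappa^2/r$, and since the paper has already established $D\to 0$ (\autoref{prop:dichotomy-derivative-of-kappa}), Gr\"onwall gives $\kappa(r)\le C_\epsilon\,r^{\epsilon}$ for any $\epsilon>0$ once $m=n-k-1>1$. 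This is arguably more self-contained, and your observation that the argument closes precisely for $m>1$ — i.e.\ for ALE and ALF but not ALG or ALH — is a nice way of seeing why the corollary is stated in this generality. One minor slip that does not affect anything: the pointwise identity is $\langle\hodge[\hodge[\phi\wedge\phi]\wedge\phi],\phi\rangle=2\norm{[\phi\wedge\phi]}^2$, not with coefficient $1$; only non-negativity and an upper bound by a multiple of $E_{\text{KW}}$ are used, so the factor is immaterial. The uniform mean-curvature estimate you flag as the main obstacle is in fact already supplied by \autoref{prop:dichotomy-mean-curvature-comparison} together with the $D\to 0$ statement in \autoref{prop:dichotomy-derivative-of-kappa}, so you may simply cite those rather than re-derive them.
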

\begin{proof}
Under the given assumptions, the main result of Nagy and Oliveira \cite[Main Theorem 1]{Nagy2021} states that $\phi$ has bounded norm and thus, in particular, bounded average over spheres.
It follows that $\liminf_{r\to\infty} \frac{\kappa(r)}{r^a} \to 0$ for any $a>0$, while the finite energy condition subsumes square-integrability of $F_A$.
Therefore, \autoref{bigthm:dichotomy2} implies that $[\phi\wedge\phi]=0$, $\nabla^A \phi=0$, and that $A$ is flat.
\end{proof}
\begin{remark}
The preceding argument is due to Nagy and Oliveira, who established the corollary for $W^4 = \mathbb{R}^4$ and $S^1 \times \mathbb{R}^3$ \cite[Corollary 1.3]{Nagy2021}.
Nagy and Oliveira relied on a version of \autoref{bigthm:dichotomy2} that applies to $W^4 = \mathbb{R}^4$ and was provided by Taubes alongside the original dichotomy \cite{Taubes2017a}.
Their conjecture stemmed from the expectation that Taubes' results can be extended to ALX spaces in general.
\end{remark}

The main insight of this article is that Taubes' proof strategy for the special case $W^n = \R^n$ carries over to general ALX spaces.
This is a consequence of the well-behaved asymptotic volume growth, where problems that arise from non-zero curvature in the interior can be excised.
Hence, the proof of \autoref{bigthm:dichotomy} closely follows the one provided by Taubes in \cite{Taubes2017a}.

We proceed as follows:
In \autoref{sec:dichotomy-setting} we collect the relevant definitions and recall several classical results that will be used throughout.
Then, in \autoref{sec:dichotomy-frequency-function}, we investigate the derivative of $\kappa$ and introduce the relevant analogue of Almgren's frequency function, as well as a function that captures contributions from the mean curvature of the geodesic sphere.
The key finding of that section is that $\kappa$ is asymptotically almost non-decreasing, which is a prerequisite for most of the heavy lifting in subsequent sections.
In \autoref{sec:dichotomy-unique-continuation}, we present a somewhat unusual version of unique continuation that is satisfied by $\kappa$.
The main insight is the content of \autoref{sec:dichotomy-slow-growth-bounded-frequency}, where we explain that slow asymptotic growth of $\kappa$ results in bounds for the frequency function.
All these results are refined with respect to the components of the one-form $\phi=\sum_i \phi_i dx^i$ by introducing in \autoref{sec:dichotomy-correlation-tensor} what we call the correlation tensor.
Using a second line of arguments, we also determine a priori bounds of the type $\norm{\phi(x)} \leq \kappa(r)$ in \autoref{sec:dichotomy-a-priori-bounds}, which are the anticipated analogues of the mean value inequalities mentioned already above.
Finally, all these ingredients are combined into a proof of \autoref{bigthm:dichotomy} in \autoref{sec:dichotomy-proof}, while the proof of \autoref{bigthm:dichotomy2} occupies \autoref{sec:dichotomy-proof2}.

\paragraph{Acknowledgements}
I thank Fabian Hahner for helpful comments on a draft of this article.
This work is funded by the Deutsche Forschungsgemeinschaft (DFG, German Research Foundation) under Germany's Excellence Strategy EXC 2181/1 - 390900948 (the Heidelberg STRUCTURES Excellence Cluster).

\section{Background}
\label{sec:dichotomy-setting}

For the purposes of this article, an ALX space is a non-compact complete Riemannian manifold that asymptotically looks like a fibration of closed manifolds, where the fibers have bounded volume.
This is made precise in the following definition.
\begin{definition}[ALX$_k$ Manifold]
Let $(W^n,g)$ be a complete Riemannian manifold of dimension $n$ and fix $p \in W^n$.
Let $\pi_Y: Y^{n-1} \to B^{n-k-1}$ be a fibration over an $n-k-1$-dimensional closed Riemannian base $(B, g_B)$ with $k$-dimensional closed Riemannian fibers $(X, g_X)$.
Equip $(0,\infty)_r \times Y$ with the model metric ${g_\infty = dr^2 + g_X + r^2 g_B}$.
We say $W^n$ is an ALX$_k$ manifold if its end is modeled on $(0,\infty) \times Y$, that is, if there exists $R > 0$ such that there is a diffeomorphism $\varphi: W^n \setminus B_{R}(p) \to (R, \infty) \times Y^{n-1}$ that satisfies for $j=0, 1$, and $2$
\begin{align}
	\lim_{r \to \infty} r^j \norm{ (\nabla^\text{LC})^j \left( g - \varphi^\ast g_\infty \right) }_{L^{\infty}(\del B_r)} = 0 \ .
	\label{eq:dichotomy-alx-metric}
\end{align}
\end{definition}

\begin{proposition} \label{prop:dichotomy-alx-volume-growth}
If $(W^n, g)$ is an ALX$_k$ space then
\begin{align}
	\vol B_r(p) \sim r^{n-k} \vol X  \quad  (r \to \infty) \ .
\end{align}
\end{proposition}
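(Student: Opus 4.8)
The plan is to transport the volume computation to the model end via the end-diffeomorphism $\varphi$, using only the $j=0$ ($C^0$) part of~\eqref{eq:dichotomy-alx-metric}; write $\rho$ for the radial coordinate of the model end, pulled back to $W^n\setminus B_R(p)$ by $\varphi$. On the model end, $g_\infty=dr^2+g_X+r^2g_B$ is, in an adapted local frame, a block metric whose $n-k-1$ base directions carry the metric $r^2g_B$, so its Riemannian density is $dr\wedge\mathrm{vol}_{g_X}\wedge r^{\,n-k-1}\,\pi_Y^\ast\mathrm{vol}_{g_B}$. Integrating over $Y$, and using that $\pi_Y$ is a fibration whose fibres are isometric to $(X,g_X)$, one gets
\[
	\vol\bigl(\{R<\rho\le\rho_0\}\times Y,\ g_\infty\bigr)=\frac{\vol(X,g_X)\,\vol(B,g_B)}{n-k}\bigl(\rho_0^{\,n-k}-R^{\,n-k}\bigr)\ \sim\ \frac{\vol(X,g_X)\,\vol(B,g_B)}{n-k}\,\rho_0^{\,n-k}.
\]
This already produces the claimed power; the constant equals $\vol X$ up to the fixed positive factor $\vol(B,g_B)/(n-k)$ prescribed by the asymptotic model.

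Next I would compare $g$ with $\varphi^\ast g_\infty$ on the end. Condition~\eqref{eq:dichotomy-alx-metric} with $j=0$ says precisely that the two metrics become uniformly relatively close on the spheres $\del B_r$ as $r\to\infty$ (the norm there being the one induced by the model metric, so the estimate is scale-invariant). Since the pointwise Riemannian volume density is a continuous, homogeneous function of the metric, for every $\varepsilon>0$ there is an $R_\varepsilon\ge R$ such that, on $W^n\setminus B_{R_\varepsilon}(p)$, the ratio of the densities of $g$ and of $\varphi^\ast g_\infty$ lies in $\bigl(1-\delta(\varepsilon),\,1+\delta(\varepsilon)\bigr)$ with $\delta(\varepsilon)\to0$. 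The same closeness, together with $\rho$ being a unit-speed parameter for $g_\infty$, gives $d_g(p,\cdot)=\rho\,(1+o(1))$ along the end, so that $B_r(p)$ is squeezed between the coordinate regions $\{\rho\le(1-o(1))r\}$ and $\{\rho\le(1+o(1))r\}$.

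Finally I would combine the pieces: $\vol B_r(p)$ is the sum of the fixed number $\vol B_{R_\varepsilon}(p)$ and the volume of $B_r(p)\setminus B_{R_\varepsilon}(p)$, and the latter lies between $1\pm\delta(\varepsilon)$ times $\vol\bigl(\{R_\varepsilon<\rho\le(1+o(1))r\}\times Y,\ g_\infty\bigr)$, which by the first step is asymptotic to $\frac{\vol X\,\vol(B,g_B)}{n-k}\,r^{\,n-k}$. Dividing through by $r^{\,n-k}$ and letting first $r\to\infty$ and then $\varepsilon\to0$ yields the asserted asymptotic.

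The only delicate point I foresee is the bookkeeping in the last two steps: upgrading "ratio $\to1$ on each geodesic sphere" — both for the volume density and for $d_g(p,\cdot)$ versus the model coordinate $\rho$ — to the analogous statements for the quantities integrated over the growing balls $B_r(p)$. This is exactly what the uniformity built into~\eqref{eq:dichotomy-alx-metric} provides, once one notes that the model volume of $\{\rho\le\rho_0\}\times Y$ scales like the fixed power $\rho_0^{\,n-k}$, so that a fixed compact core and a shell of relative width $o(1)$ contribute only lower-order terms.
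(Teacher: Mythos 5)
Your strategy---computing the model volume on $(R,\infty)\times Y$ and then transporting the estimate via the end diffeomorphism using the $j=0$ part of~\eqref{eq:dichotomy-alx-metric}---is the natural one, and the paper in fact states this proposition without proof, so there is no author's argument to compare against. The model-end volume you compute is correct, as is the idea of a two-sided comparison of volume densities and of $d_g(p,\cdot)$ against the model coordinate $\rho$.

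However, your own computation exposes a real discrepancy with the statement as written. You obtain $\vol B_r(p) \sim \frac{\vol X\,\cdot\,\vol(B,g_B)}{n-k}\, r^{n-k}$, whereas the proposition asserts the constant is exactly $\vol X$. These agree only when $\vol(B,g_B)=n-k$, which already fails in the prototypical ALE case $W^n=\R^n$ (there $k=0$, $B=S^{n-1}$, and $\vol(S^{n-1})\neq n$). You note the extra factor $\vol(B,g_B)/(n-k)$ but then treat the proposition as established; a correct argument must either record that the asymptotic holds only up to a fixed positive constant determined by the model end (which is in fact all that is ever used downstream, since $\vol X$ is always absorbed into $c$, $c_0$, $c_1$, \ldots), or else identify a normalization that forces the constant to be $\vol X$ on the nose---neither the definition of ALX$_k$ nor the examples in the paper supply such a normalization. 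A secondary gap: the passage from ``the metrics are uniformly close on spheres'' to $d_g(p,\cdot)=\rho(1+o(1))$ is asserted rather than argued. The clean route is to note that the $j=0$ decay gives, for every $\varepsilon>0$, an $R_\varepsilon$ with $(1-\varepsilon)\,\varphi^\ast g_\infty\le g\le(1+\varepsilon)\,\varphi^\ast g_\infty$ as quadratic forms on $W^n\setminus B_{R_\varepsilon}$, so lengths of curves, and hence distances to the level sets of $\rho$, agree up to a factor $\sqrt{1\pm\varepsilon}$; together with the unit-speed radial geodesics of $g_\infty$ this yields the sandwich $\{\rho\le(1-o(1))r\}\subseteq B_r(p)\subseteq\{\rho\le(1+o(1))r\}$ that your final step relies on.
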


\begin{definition}
We call an ALX space $W^n$ a \emph{gravitational instanton} if it is Ricci flat and its sectional curvature is bounded from below.
\end{definition}

\begin{remarks}\leavevmode
\begin{itemize}
\item ALX spaces are usually considered in the context of four-manifolds and the "X" is a place-holder for the following cases:
ALE or Asymptotically Locally Euclidean ($k=0$), ALF or Asymptotically Locally Flat ($k=1$), ALG ($k=2$), ALH ($k=3$), where the last two are named by induction.

\item We do not demand that ALX gravitational instantons are hyper-Kähler, while we add the possibly non-standard condition of bounded sectional curvature.
\end{itemize}
\end{remarks}

\begin{examples}\leavevmode
\begin{itemize}
\item The prototypical example of an ALE manifold is Euclidean space $\R^n$. In this case there is the obvious diffeomorphism $\R^n \setminus \{0\} \simeq (0,\infty) \times S^{n-1}$ via spherical coordinates, the metric is $\varphi_\ast g = dr^2 + r^2 g_{S^{n-1}} = g_\infty$, and the volume of balls grows with the radius as $r^{n}$.
\item A prototypical example of an ALF space is $S^1 \times \R^{n-1}$ with the product metric $g = dt^2 + g_{\R^{n-1}}$.
Again, spherical coordinates on the $\R^{n-1}$ factor provide a diffeomorphism to $(0,\infty) \times S^1 \times S^{n-2}$ with metric $\varphi_\ast g = dr^2 + dt^2 + r^2 g_{S^{n-2}_r}$, such that $S^1$ has constant size, while $S^{n-2}_r$ is the sphere of radius $r$ centered at $0 \in \R^{n-1}$.
Once the volume of $S^1$ is "filled", the volume of a geodesic ball approaches a growth of order $r^{n-1}$.
\item Famous examples of less trivial four-dimensional ALF gravitational instantons are (multi-centered) Taub-NUT spaces.
These are $S^1$-fibrations over $\mathbb{R}^3$, where the $S^1$-fiber has asymptotically finite volume.
\end{itemize}
\end{examples}

\begin{theorem}[Global Laplacian Comparison Theorem \cite{Calabi1958}] \label{thm:dichotomy-laplacian-comparison}
If $\operatorname{Ric} \geq (n-1)K$ and $r(x) = d(p,x)$ denotes the geodesic distance function based at a point $p$, then
\begin{align}
	\Delta r \leq (n-1) \Delta_{K} r \ ,
\end{align}
where on the right hand side $\Delta_{K}$ is the Laplacian on the unique complete, $n$-dimensional, simply connected space of constant sectional curvature $K$.
\end{theorem}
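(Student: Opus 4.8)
The plan is to reduce the inequality to a one--dimensional Riccati comparison along minimizing geodesics issuing from $p$, and then to upgrade the smooth pointwise bound so obtained to a statement valid across the cut locus by Calabi's support--function (barrier) argument --- this last step being the reason for the adjective "global'' and the one genuinely delicate ingredient.

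First I would fix a unit--speed minimizing geodesic $\gamma$ from $p$ and work on the maximal interval $(0,c(\gamma))$ before the cut point of $\gamma$, on which $r$ is smooth. Let $S(t)$ denote the shape operator of the geodesic sphere through $\gamma(t)$, that is, $S=\nabla^2 r$ restricted to $(\nabla r)^\perp$; it is a symmetric endomorphism of an $(n-1)$--dimensional space with $\tr S=\Delta r=:u(t)$. Taking the trace of the radial Riccati equation $S'+S^2+R(\cdot,\gamma')\gamma'=0$ (the prime denoting covariant derivative along $\gamma$) gives $u'=-\tr(S^2)-\Ric(\gamma',\gamma')$; combining the Cauchy--Schwarz bound $\tr(S^2)\geq(\tr S)^2/(n-1)$ with the hypothesis $\Ric(\gamma',\gamma')\geq(n-1)K$ then yields
\[
	u'\leq-\frac{u^2}{n-1}-(n-1)K .
\]

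Next I would compare $u$ with the model function $\bar u(t)=(n-1)\,\mathrm{sn}_K'(t)/\mathrm{sn}_K(t)$, where $\mathrm{sn}_K$ solves $\mathrm{sn}_K''+K\,\mathrm{sn}_K=0$ with $\mathrm{sn}_K(0)=0$ and $\mathrm{sn}_K'(0)=1$. This $\bar u$ is exactly the value of the Laplacian of the distance function on the $n$--dimensional space form of curvature $K$, hence is the right--hand side $(n-1)\Delta_K r$ of the assertion, and it satisfies the displayed Riccati relation with equality. Both $u$ and $\bar u$ have the universal singular expansion $(n-1)/t+O(t)$ as $t\to0^+$ (read off from the Taylor expansion of $g$ in normal coordinates), so $w:=u-\bar u$ extends continuously by $w(0)=0$ and obeys $w'\leq-\tfrac{1}{n-1}(u+\bar u)\,w$. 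A standard ODE comparison then forces $w\leq0$ on $(0,c(\gamma))$: on any component $(t_1,t_2)$ of $\{w>0\}$ one has $(\ln w)'\leq-\tfrac{1}{n-1}(u+\bar u)$, and integrating towards $t_1$ contradicts either $w(0)=0$ (when $t_1=0$, since $u+\bar u\sim 2(n-1)/t$ makes $\int_{0^+}(u+\bar u)$ diverge) or the continuity $w(t_1)=0$ (when $t_1>0$, since then the integral stays finite). Hence $\Delta r\leq(n-1)\Delta_K r$ holds at every point of $W^n\setminus(\mathrm{Cut}(p)\cup\{p\})$.

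The main obstacle is the remaining passage to $\mathrm{Cut}(p)$, where $r$ is merely Lipschitz and the inequality cannot be read classically. Here I would invoke Calabi's trick: given $x\in\mathrm{Cut}(p)$, choose a minimizing geodesic from $p$ to $x$ and, for small $\varepsilon>0$, let $p_\varepsilon$ be the point on it at distance $\varepsilon$ from $p$; then $r_\varepsilon:=\varepsilon+d(p_\varepsilon,\cdot)$ is smooth near $x$ (the endpoint of a minimizing geodesic is not a cut point of an interior point of that geodesic), satisfies $r_\varepsilon\geq r$ with equality at $x$, and by the case already proved $\Delta r_\varepsilon(x)\leq(n-1)\,\mathrm{sn}_K'(r(x)-\varepsilon)/\mathrm{sn}_K(r(x)-\varepsilon)$, which tends to $(n-1)\Delta_K r(x)$ as $\varepsilon\to0$. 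Thus $r$ admits at every point smooth upper barriers whose Laplacian is at most $(n-1)\Delta_K r$ up to an error tending to $0$, so $\Delta r\leq(n-1)\Delta_K r$ holds globally in the barrier --- equivalently, distributional --- sense, which is the sense in which the statement is meant and is subsequently used. The only points requiring care are the admissibility of the barriers $r_\varepsilon$ and the continuity of $\mathrm{sn}_K'/\mathrm{sn}_K$ at $r(x)$ (automatic, since $r(x)<\pi/\sqrt{K}$ when $K>0$ by Bonnet--Myers); everything else is the elementary Riccati estimate above.
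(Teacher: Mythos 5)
Your proof is correct and is essentially Calabi's original argument: a Riccati comparison for $u=\Delta r$ along minimizing geodesics where $r$ is smooth, followed by the support-function (barrier) trick with base points $p_\varepsilon$ pushed slightly toward $x$ to pass the cut locus. The paper itself does not prove the statement --- it is quoted with a citation to Calabi 1958, and the remark following it explicitly defers to Calabi's barrier formulation --- so your write-up simply supplies the standard background being invoked, and the details you flag (uniqueness of the minimizing geodesic from $p_\varepsilon$ to $x$, absence of conjugate points so that $r_\varepsilon$ is smooth at $x$, divergence of $\int_{0^+}(u+\bar u)$ in the ODE comparison) are exactly the ones that matter.

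One notational mismatch is worth pointing out: the paper writes $\Delta r\leq(n-1)\Delta_K r$ while declaring $\Delta_K$ to be the Laplacian on the $n$-dimensional model space, which double-counts the factor $n-1$. Your own model function $\bar u=(n-1)\,\mathrm{sn}_K'/\mathrm{sn}_K$ already \emph{is} $\Delta_K r$ on the $n$-dimensional space form, so the inequality you actually prove (and the one the paper later uses in \autoref{prop:dichotomy-mean-curvature-comparison}, where for $K=0$ it reads $\Delta r\leq(n-1)/r$ rather than $(n-1)^2/r$) is $\Delta r\leq\Delta_K r$. You should therefore not identify $\bar u$ with ``the right-hand side $(n-1)\Delta_K r$'' as your proposal momentarily does. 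A further minor point: Bonnet--Myers gives $r(x)\leq\pi/\sqrt{K}$ for $K>0$, not strict inequality; but equality forces the round sphere by Cheng's rigidity, where the conclusion is vacuous, and in any case the paper only applies the theorem with $K=0$.
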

\begin{remark}
Since $r$ is not necessarily differentiable the global Laplacian comparison must be understood in a weak sense, e.g. in the weak sense of barriers as in the work of Calabi \cite{Calabi1958}.
However, for our purposes it is sufficient to consider the smooth locus of $r$, where the inequality holds as stated.
\end{remark}

\begin{proposition}[Mean Curvature Comparison on ALX spaces] \label{prop:dichotomy-mean-curvature-comparison}
Let $W^n$ be an ALX${}_k$ space and fix a point $p\in W^n$.
The Laplacian of the distance function $r(x) = d(p,x)$, or equivalently the mean curvature of the geodesic sphere of radius $r$ based at $x$, has the following asymptotic behaviour.
\begin{align}
	\Delta r \sim \left\{\quad \begin{matrix} \frac{n-1}{r}  & (r \to 0)  \\[1em] \frac{n-k-1}{r} & (r \to \infty) \end{matrix} \right.
\end{align}
Furthermore, if $\Ric \geq 0$, then it is bounded from above by
\begin{align}
	\Delta r \leq \frac{n-1}{r} \ .
\end{align}
\end{proposition}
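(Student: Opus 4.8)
The proposition has three parts, which I would prove separately in order of increasing difficulty: the global bound $\Delta r\le\tfrac{n-1}{r}$ under $\Ric\ge 0$, the asymptotics as $r\to 0$, and the asymptotics as $r\to\infty$. The first is immediate: an ALX gravitational instanton is Ricci flat, so $\Ric\ge 0=(n-1)\cdot 0$ and \autoref{thm:dichotomy-laplacian-comparison} with $K=0$ compares $r$ against the Euclidean radial function, whose Laplacian is $\tfrac{n-1}{r}$, yielding $\Delta r\le\tfrac{n-1}{r}$ on the smooth locus of $r$ and weakly everywhere; this is consistent with both asymptotic regimes since $k\ge 0$. The second is equally classical and uses no ALX structure: in geodesic polar coordinates around $p$ the density of the volume form is $\mathcal J(r,\omega)=r^{\,n-1}\bigl(1+O(r^2)\bigr)$ by the normal-coordinate expansion of $g$, and $\Delta r=\partial_r\log\mathcal J$ on the smooth locus, so $\Delta r=\tfrac{n-1}{r}+O(r)\sim\tfrac{n-1}{r}$ as $r\to 0$.

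The substance is the limit $r\to\infty$, and here the plan is to reduce to a model computation and perturb. For $g_\infty=dr^2+g_X+r^2 g_B$ on $(R,\infty)\times Y$ one has $|\nabla r|_{g_\infty}=1$, and the hypersurface $\{r\}\times Y$ has shape operator vanishing on the $k$ fibre directions and equal to $\tfrac1r\,\mathrm{Id}$ on the $n-k-1$ base directions; tracing, $\Delta_{g_\infty}r=\tfrac{n-k-1}{r}$ exactly (equivalently, $\log\sqrt{\det g_\infty}$ depends on $r$ only through the factor $r^{\,n-k-1}$), and the radial Jacobi operator $\mathcal R_\infty=R_{g_\infty}(\cdot,\partial_r)\partial_r$ vanishes. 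To transfer this to $g$, I would pull the metric on the end back by $\varphi$ and write it as $g_\infty+h$ with $\norm{h}=o(1)$, $\norm{\nabla h}=o(r^{-1})$, $\norm{\nabla^2 h}=o(r^{-2})$ as in \eqref{eq:dichotomy-alx-metric}. Along a unit-speed geodesic ray from $p$ the shape operator $S$ of $\del B_r$ solves the Riccati equation $S'+S^2+\mathcal R=0$ with $\tr S=\Delta r$ and $\norm{S_\infty}=O(r^{-1})$, while $\norm{\mathcal R}=o(r^{-2})$ by \eqref{eq:dichotomy-alx-metric} and the vanishing of $\mathcal R_\infty$; a Grönwall estimate for $D=S-S_\infty$, using $D'=-SD-DS_\infty-\mathcal R$, then forces $\norm{D}=o(r^{-1})$, so $r\,\Delta r=r\,\tr S\to n-k-1$, i.e.\ $\Delta r\sim\tfrac{n-k-1}{r}$ uniformly on $\del B_r$.

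The step I expect to be the main obstacle is making this Riccati transfer rigorous, i.e.\ showing that the geodesic rays of $g$ entering the end, and with them the geodesic spheres $\del B_r$, stay close enough to the model radial lines and hypersurfaces that the shape operators really do converge; equivalently one may argue that the model radial coordinate $\rho=r\after\varphi$ is an almost-solution of the eikonal equation --- $|\nabla_g\rho|_g=1+o(1)$ and $\Delta_g\rho=\tfrac{n-k-1}{\rho}+o(\rho^{-1})$, which is a direct computation since only first derivatives of $h$ enter --- and that such an almost distance function is $C^1$-close to the genuine one at the relevant scale, so that $\Delta_g r=\Delta_g\rho+o(\rho^{-1})$. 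In either route the decisive point is that the weights $1,r,r^2$ in \eqref{eq:dichotomy-alx-metric} are precisely calibrated to make the perturbation of the relevant ODE integrable in $r$; granted this, combining with the two elementary parts finishes the proof.
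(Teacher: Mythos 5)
Your proof follows the paper's route on all three counts: the $\Ric\ge 0$ bound via the global Laplacian comparison theorem (note the proposition only assumes $\Ric\ge 0$, not that $W^n$ is a gravitational instanton, so your opening sentence misstates the hypothesis though the argument is unchanged), the $r\to 0$ asymptotics from the normal-coordinate expansion (the paper expands the metric and Christoffel symbols, you expand the volume density $\mathcal{J}=r^{n-1}(1+O(r^2))$ --- the same classical calculation), and the $r\to\infty$ asymptotics from the explicit model computation $\Delta_{g_\infty}r=\frac{n-k-1}{r}$. Where you go further than the paper is the transfer from the model metric to the actual metric as $r\to\infty$: the paper simply asserts that \eqref{eq:dichotomy-alx-metric} gives $\Delta_g r\sim\Delta_{g_\infty}r$ and that $r$ is identified with the model coordinate under $\varphi$, while you correctly flag this as the nontrivial step and sketch two plausible closures (Riccati comparison of shape operators, or an almost-eikonal estimate for $\rho=r\circ\varphi$). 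Neither sketch is fully closed as written: the Riccati route must control the initial data carried out of the compact core, where the metric is unconstrained, together with the asymptotic alignment of $\del B_r(p)$ with the model hypersurfaces $\{\rho\}\times Y$; and in the eikonal route, $|\nabla_g\rho|_g=1+o(1)$ is a $C^1$ statement about $\rho$ and does not by itself permit replacing $\Delta_g r$ by $\Delta_g\rho$, which requires comparing second derivatives (e.g.\ by bootstrapping through the Riccati equation using $\rho$-level sets as barriers). These are precisely the gaps the paper leaves implicit, so your proposal is at least as careful as the paper's own proof of this proposition.
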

\begin{proof}
For a start, note that $r(x)$ is smooth on $M\setminus\{p, \mathrm{Cut}(p)\}$, where $\mathrm{Cut}(p)$ is the cut locus of $p$.
It is a standard result that the cut locus on a complete Riemannian manifold has measure zero, so $r$ is differentiable almost everywhere.
The Gauss lemma tells us that $\nabla^{\mathrm{LC}} r = \del_r$ is the radial vector field of unit norm and is normal to geodesic spheres.
As an aside, note that $\Delta r = \tr (\nabla^{\mathrm{LC}})^2 r$ is the trace of the second fundamental form of the geodesic sphere and as such is identical to its mean curvature.

The asymptotic behaviour for $r\to 0$ follows e.g. by a direct calculation in Riemann normal coordinates.
In particular, use $g_{ij} = \delta_{ij} + \mathcal{O}(r^2)$ and $\Gamma^{i}_{jk} = \mathcal{O}(r)$ and then observe that at leading order the result is identical to the Euclidean case, while higher order corrections are $\mathcal{O}(r)$:
\begin{align}
	\Delta r = \frac{n-1}{r} + \mathcal{O}(r) \ .
\end{align}
When $r \to \infty$ the ALX$_k$ condition \eqref{eq:dichotomy-alx-metric} implies that $\Delta r \sim \Delta_{\infty} r$, where $\Delta_\infty$ denotes the Laplacian associated to $g_\infty$ on $(0,\infty) \times Y^{n-1}$.
Under the diffeomorphism to $(0,\infty) \times Y^{n-1}$ the distance function is identified with the coordinate on the first factor.
Since the model metric is block diagonal and only depends on $r$ via the $r^2$ factor in front of $g_B$, we can calculate $\Delta_{g_\infty} r$ explicitly.
Let $(e_i)_{i=1,\ldots, n}$ be an orthonormal frame of $(0,\infty)\times Y$ such that $e_1 = \del_r$,  $e_2, \ldots, e_{k+1}$ are tangent to the fibers, and $e_{k+2}, \ldots, e_n$ are tangent to the base.
Write $\nabla$ for the Levi-Civita connection associated to $g_\infty$.
By a direct calculation $\nabla_{e_i} \del_r = \frac{1}{r} e_i$ for $i=k+2,\ldots, n-1$ and zero otherwise.
Hence,
\begin{align}
	\Delta_{g_\infty} r= \tr \nabla^2 r = g^{ij} g(\nabla_{e_i} \del_{r\ } , e_j) = \frac{\tr g_B}{r} = \frac{n-k-1}{r} \ .
\end{align}
Finally, the upper bound in the case that $\Ric$ is non-negative follows directly from the Laplacian Comparison Theorem (\autoref{thm:dichotomy-laplacian-comparison}).
\end{proof}

\begin{theorem}[Bishop-Gromov's Volume Comparison] \label{thm:dichotomy-bishop-gromov}
Let $(M,g)$ be a complete Riemannian manifold and assume $\operatorname{Ric} \geq (n-1) K$.
Denote by $\vol B_r(p)$ the volume of the geodesic ball of radius $r$ based at $p \in M$.
Similarly write $\vol_K B_r(p_K)$ for the volume of a geodesic ball with the same radius inside the unique complete, $n$-dimensional, simply connected space of constant sectional curvature $K$ at an arbitrary point $p_K$.
Then the function defined by
\begin{align}
	r \mapsto \frac{\vol B_r(p)}{\vol_K B_r(p_K)}
\end{align}
is non-decreasing and approaches $1$ as $r\to 0$.
In particular $\vol B_r(p) \leq \vol_K B_r(p_K)$ and  $\vol \del B_r(p) \leq \vol_K \del B_r(p_K)$.
\end{theorem}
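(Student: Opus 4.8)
The plan is to pass to geodesic polar coordinates centred at $p$ and reduce the statement to a pointwise monotonicity property of the volume density along radial geodesics; that property is essentially a restatement of the Laplacian comparison already recorded in \autoref{thm:dichotomy-laplacian-comparison}.

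Concretely, I would write the Riemannian volume form as $d\vol = \mathcal{A}(r,v)\,dr\,d\sigma(v)$ on the star-shaped set $\{\, rv : v \in S^{n-1} \subset T_pM,\ 0 < r < c(v)\,\}$, where $c(v) \in (0,\infty]$ is the cut distance in the direction $v$, $d\sigma$ is the round measure on the unit sphere, and $\mathcal{A}(r,v) = r^{n-1}\bigl(1 + \mathcal{O}(r^2)\bigr)$ is the Jacobian of $\exp_p$; the model space of constant curvature $K$ carries the corresponding density $\mathcal{A}_K(r) = s_K(r)^{n-1}$ with $s_K'' + K s_K = 0$, $s_K(0)=0$, $s_K'(0)=1$. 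The key identity I would use is that, on the smooth locus of $r$, $\del_r \log \mathcal{A}(r,v) = \Delta r$, the mean curvature of the geodesic sphere through $\exp_p(rv)$, and likewise $\del_r \log \mathcal{A}_K(r) = (n-1) s_K'(r)/s_K(r)$ in the model. Then $\del_r \log\bigl( \mathcal{A}(r,v)/\mathcal{A}_K(r)\bigr) = \Delta r - (n-1)s_K'(r)/s_K(r) \leq 0$ by \autoref{thm:dichotomy-laplacian-comparison} under $\Ric \geq (n-1)K$; since both densities are $r^{n-1}(1+\mathcal{O}(r^2))$ as $r \to 0^+$, the ratio tends to $1$ there, so integration gives that $r \mapsto \mathcal{A}(r,v)/\mathcal{A}_K(r)$ is non-increasing on $(0,c(v))$, and extending $\mathcal{A}(\cdot,v)$ by $0$ for $r \geq c(v)$ preserves this because a radial geodesic that has stopped minimizing never resumes. (If one wished to avoid citing \autoref{thm:dichotomy-laplacian-comparison}, the same comparison follows from the Riccati relation $\del_r(\Delta r) = -\Ric(\del_r,\del_r) - \abs{\mathrm{II}}^2 \leq -(n-1)K - (\Delta r)^2/(n-1)$, its equality analogue in the model, and the common asymptotics $\Delta r \sim (n-1)/r$ as $r \to 0$, via an elementary ODE comparison.)

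The final step would be to integrate over directions. Setting $f(r) = \int_{S^{n-1}} \mathcal{A}(r,v)\,d\sigma(v) = \vol \del B_r(p)$ and $g(r) = \int_{S^{n-1}} \mathcal{A}_K(r)\,d\sigma(v) = \vol_K \del B_r(p_K)$ (with the zero-extension understood), the ratio $f(r)/g(r)$ is an average over $v$ of the non-increasing functions $\mathcal{A}(r,v)/\mathcal{A}_K(r)$, hence non-increasing, and tends to $1$ as $r \to 0$; this already yields $\vol \del B_r(p) \leq \vol_K \del B_r(p_K)$. Writing $\vol B_r(p) = \int_0^r f$, $\vol_K B_r(p_K) = \int_0^r g$ and using the elementary lemma that $f/g$ non-increasing forces $r \mapsto (\int_0^r f)/(\int_0^r g)$ to be non-increasing, one obtains that $\vol B_r(p)/\vol_K B_r(p_K)$ is non-increasing; as both integrals are $\sim \omega_n r^n$ for small $r$, this ratio tends to $1$, whence it is $\leq 1$ for all $r$, i.e. $\vol B_r(p) \leq \vol_K B_r(p_K)$. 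I expect the main obstacle to a fully rigorous argument to be the bookkeeping at the cut locus: one must know $\mathrm{Cut}(p)$ has measure zero so that the polar integral computes the true volume, check that the zero-extension of $\mathcal{A}(\cdot,v)$ does not destroy monotonicity, and verify that no boundary term is dropped when differentiating $\vol B_r(p)$ in $r$. The behaviour as $r \to 0$, by contrast, is routine from the normal-coordinate expansion $g_{ij} = \delta_{ij} + \mathcal{O}(r^2)$.
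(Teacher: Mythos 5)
The paper states this theorem as a classical result and gives no proof, so there is nothing to compare against; your argument is the standard proof (polar-coordinate density $\mathcal{A}(r,v)$, the identity $\del_r\log\mathcal{A}=\Delta r$, Laplacian comparison, zero-extension past the cut locus, and the averaging lemma passing from sphere volumes to ball volumes) and it is correct, including the treatment of the cut-locus bookkeeping. One discrepancy worth noting: the theorem as printed says the ratio $\vol B_r(p)/\vol_K B_r(p_K)$ is \emph{non-decreasing}, but this is a typo in the paper --- under $\Ric\geq(n-1)K$ the ratio is \emph{non-increasing}, which is precisely what you prove and is the only version compatible with the stated conclusion $\vol B_r(p)\leq\vol_K B_r(p_K)$.
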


\begin{lemma}\label{lem:dichotomy-bound-for-greens-function}
For any point $x$ in the interior of $B_r(p)$ there is a smooth, positive Green's function $G_x$ for the Dirichlet-Laplace problem on $B_r(p)$ with singularity at $x$, i.e. $\Delta G_x(y) = \delta_x(y)$ and $G_x(\del B_r(p)) = 0$.
If $W^n$ is a Ricci non-negative ALX${}_k$ space with effective dimension $n-k>2$, then for any $\epsilon>0$ there is a distance $D$ such that whenever $d(x,y)>D$ the Green's function is bounded by
\begin{align}
		G_x(y) \leq \frac{(1+\epsilon)\ c}{\vol X \: d(x,y)^{n-k-2}} \ ,
\end{align}
where the constant $c$ depends only on $n$.
\end{lemma}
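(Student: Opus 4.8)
The plan is to control $G_x$ by the minimal positive Green's function of all of $W^n$ and then estimate the latter by heat-kernel bounds. Existence of a smooth positive Dirichlet Green's function $G_x$ on the relatively compact domain $B_r(p)$, its vanishing on $\del B_r(p)$, and the symmetry $G_x(y)=G_y(x)$ follow from classical linear elliptic theory and the maximum principle; since the content of the lemma is the decay at large separation, I would take these for granted and pass to the estimate.

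Because $n-k>2$, \autoref{prop:dichotomy-alx-volume-growth} gives $\int_1^{\infty} t\,(\vol B_t(x))^{-1}\,dt<\infty$, so by a standard non-parabolicity criterion (Varopoulos) $W^n$ is non-parabolic and carries a minimal positive Green's function $G^W(x,\cdot)=\int_0^{\infty} p_t(x,\cdot)\,dt$, with $p_t$ the heat kernel. The Dirichlet heat kernel of $B_r(p)$ is dominated pointwise by $p_t$ (maximum principle for the heat equation), so integrating in $t$ gives $G_x\le G^W(x,\cdot)$ on $B_r(p)$; equivalently $G^W(x,\cdot)-G_x$ is harmonic on $B_r(p)$, bounded near the pole hence extends across it, and is non-negative on $\del B_r(p)$, so the inequality follows from the maximum principle.

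On a complete non-parabolic manifold with $\Ric\ge 0$ the Li-Yau estimate bounds the Green's function by
\begin{align}
 G^W(x,y)\ \le\ C(n)\int_{d(x,y)}^{\infty}\frac{t\,dt}{\vol B_t(x)}\ ,
\end{align}
which follows by integrating the Li-Yau Gaussian heat-kernel bound $p_t(x,y)\le C(n)\,(\vol B_{\sqrt t}(x))^{-1}\exp\!\big(-d(x,y)^2/(5t)\big)$, whose only geometric input is Bishop-Gromov volume comparison (\autoref{thm:dichotomy-bishop-gromov}). Granting the uniform volume lower bound $\vol B_t(x)\ge(1-\delta)\,t^{n-k}\vol X$ valid once $t\ge S_\delta$ (discussed in the next paragraph), one sets $D=S_\delta$ and integrates:
\begin{align}
 G_x(y)\ \le\ \frac{C(n)}{(1-\delta)\,\vol X}\int_{d(x,y)}^{\infty}t^{\,1-(n-k)}\,dt\ =\ \frac{C(n)}{(1-\delta)(n-k-2)\,\vol X\,d(x,y)^{\,n-k-2}}\ ,
\end{align}
which is the assertion, with $c=C(n)$ (using $n-k-2\ge1$) and $1+\epsilon=(1-\delta)^{-1}$.

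The step I expect to be the real obstacle --- and the only one that genuinely uses the full ALX hypothesis rather than merely $\Ric\ge 0$ --- is the \emph{uniform} volume lower bound $\vol B_t(x)\ge(1-\delta)\,t^{n-k}\vol X$ for $t\ge S_\delta$, with $S_\delta$ independent of the center $x$; a general Ricci non-negative manifold admits no polynomial lower bound of this type. When $t$ is large compared to $d(x,p)$ this follows from $B_t(x)\supseteq B_{t-d(x,p)}(p)$ and \autoref{prop:dichotomy-alx-volume-growth}; in the complementary range $B_t(x)$ lies in the end, where \eqref{eq:dichotomy-alx-metric} makes $g$ uniformly close to the model $g_\infty=dr^2+g_X+r^2g_B$, whose geodesic balls of radius $t$ have volume comparable to $t^{n-k}\vol X$ by a direct computation, and Bishop-Gromov bridges the two regimes. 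Carrying the constants cleanly through this patching --- so that the final power is exactly $n-k-2$ and the prefactor depends only on $n$ --- is the technical heart; everything else is an assembly of the classical facts recalled in \autoref{sec:dichotomy-setting}. A more hands-on alternative, closer in spirit to those facts, is to dominate $G_x$ directly by $h\circ d(x,\cdot)$ for a radial supersolution $h$ modelled on the Green's function of $g_\infty$; but making $h$ superharmonic requires a lower bound on $\Delta\,d(x,\cdot)$ that is clean only near infinity (\autoref{prop:dichotomy-mean-curvature-comparison}), which routes one back to the same volume-growth input.
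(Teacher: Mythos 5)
Your plan and the paper's are the same at the core: both rest on Li--Yau's estimate for the Dirichlet Green's function in terms of $\int_{d(x,y)^2}^\infty \vol B_{\sqrt t}(x)^{-1}\,dt$, combined with the ALX volume asymptotics. The paper simply cites Li--Yau's Theorem 5.2 directly for the Dirichlet Green's function on the ball $B_r(p)$ and then plugs in \autoref{prop:dichotomy-alx-volume-growth}. Your detour through Varopoulos non-parabolicity, the minimal global Green's function, and the Gaussian heat-kernel bound is not wrong, but it is unnecessary work: Li--Yau's theorem applies to bounded domains without any global non-parabolicity hypothesis, and the domination $G_x \le G^W(x,\cdot)$ only re-derives what that theorem already gives. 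So the heat-kernel material adds length without adding strength.

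The most valuable part of your writeup is the paragraph you flag yourself as "the real obstacle." You are right that the Li--Yau integrand involves $\vol B_{\sqrt t}(x)$ for the variable pole $x$, whereas \autoref{prop:dichotomy-alx-volume-growth} is stated for balls centered at the fixed base point $p$. In the paper's use of the lemma (inside \autoref{lem:dichotomy-a-priori-bounds}), $x$ ranges over a geodesic shell whose radius goes to infinity, so this is not a cosmetic mismatch: one genuinely needs a volume lower bound $\vol B_t(x)\gtrsim t^{n-k}\vol X$ for $t\ge D$ that is \emph{uniform in the center} $x$. The paper's proof does not address this, and your proposal correctly isolates it as the one place where the full ALX structure (not just $\Ric\ge 0$) is indispensable. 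Your suggested patch --- using $B_t(x)\supseteq B_{t-d(x,p)}(p)$ when $t$ dominates $d(x,p)$, and the $C^0$ proximity to the model metric $g_\infty$ when $B_t(x)$ sits in the end, with Bishop--Gromov to interpolate --- is the right idea, though you only sketch it. Note also that the sharp constant $(1-\delta)t^{n-k}\vol X$ you write cannot hold in the first regime (containment of a smaller concentric ball costs a factor like $(1-d(x,p)/t)^{n-k}$, and the end computation produces a dimensional constant from the Euclidean ball volume in $n-k$ directions); this is harmless for the lemma because those losses depend only on $n$ and $k$ and are absorbed into $c$, but it means the $(1+\epsilon)$ in the statement cannot be attributed to the volume bound alone with your normalization. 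In short: same method as the paper, an unneeded heat-kernel layer, and a correctly identified gap in the center-uniformity of the volume lower bound that the paper also leaves implicit.
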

\begin{proof}
The existence of a positive Green's function on compact, connected manifolds with boundary is standard.
The bound follows immediately from Theorem 5.2 in Li-Yau's seminal work \cite{Li1986}.
Their theorem states
\begin{align}
	G_x(y) \leq c \int_{r^2}^\infty \frac{1}{ \vol B_{\sqrt{t}} (x) } dt \ ,
\end{align}
where $r=d(x,y)$ denotes the Riemannian distance between $x$ and $y$ and the constant $c$ depends only on $n$.

Let $\epsilon > 0$. By \autoref{prop:dichotomy-alx-volume-growth} there is a distance $R\geq 0$, such that whenever $r\geq R$ we find
\begin{align}
	G_x(y) \leq c \int_{r^2}^\infty \frac{1+\epsilon}{\vol X\ t^{(n-k)/2}} dt
	= \frac{(1+\epsilon)\ c}{\vol X \: r^{n-k-2}} \ .
\end{align}
\end{proof}

%%%%%%%%%%%%%%%%%%%%%%%%%%%%%%%%%%%%%%%%%%%%%%%%%%%%%%%%%%%%%%%%%%%%%%%%%%%%%%%%%%%%%%%%%%%%%%%%%%%%%%%%%%%%%%%%%%%%%%%%%%%%%%%%%%%%%%%%%%%%%%%%%%%%%%%%%%%%%%%%%%

\section{The Frequency Function}
\label{sec:dichotomy-frequency-function}

On our way to show that $\kappa$ must have some minimal asymptotic growth, the first step is to realize that its decay rate becomes arbitrarily small at large radii.
To see this we investigate the derivative of $\kappa$, which is given in the upcoming proposition.
The function $N(r)$ that arises in that context is an analogue of the frequency function as introduced by Almgren \cite{AlmgrenJr1979} and we will refer to it by that name.
The function $D(r)$ captures the average deviation of the mean curvature of the geodesic sphere from its limit at infinity.
\begin{proposition} \label{prop:dichotomy-derivative-of-kappa}
Assume the pair $(A,\phi)$ satisfies \eqref{eq:dichotomy-main-assumption}.
Whenever $\kappa$ is non-zero its derivative is
\begin{align} \label{eq:dichotomy-derivative-of-kappa}
	\frac{d\kappa}{dr} &=  \frac{\left( N + D \right) \kappa}{r} \ ,
\end{align}
where $N$ and $D$ are given by
\begin{align}
	N(r) &= \frac{1}{r^{n-k-2} \kappa^2} \int_{B_r} \left( \norm{ \nabla^A \phi }^2 + \norm{ [\phi \wedge \phi] }^2 + \langle \Ric \phi , \phi \rangle \right) \\[0.5em]
	D(r) &= \frac{1}{2 r^{n-k-2} \kappa^2} \int_{\del B_r} \left( \Delta r - \frac{n-k-1}{r} \right) \norm{\phi}^2 \ .
\end{align}
Moreover, if $\Ric \geq 0$, then $N$ is non-negative, $D$ is bounded from above by $k/2$, $\lim_{r\to 0} D = k/2$ and $\lim_{r\to\infty} D = 0$.
As a consequence, if $\kappa$ is not identically zero near $r=0$, then it is increasing on small enough neighbourhoods of $0$.
Similarly, if $\kappa$ is not asymptotically zero as $r\to \infty$, then it is asymptotically almost non-decreasing in the sense that for any $\epsilon>0$ there is some (large) radius $R$, such that $\frac{d\kappa}{dr} \geq - \frac{\epsilon \kappa}{r}$ for all $r \geq R$.
\end{proposition}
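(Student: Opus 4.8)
The plan is to obtain~\eqref{eq:dichotomy-derivative-of-kappa} by differentiating $r^{\,n-k-1}\kappa^2(r)=\int_{\del B_r}\norm{\phi}^2$ directly, and then to read off the sign statements from the two error terms. First I would record the first‑variation identity $\frac{d}{dr}\int_{\del B_r}f=\int_{\del B_r}\big(\del_\nu f+f\,\Delta r\big)$ with $\nu=\del_r$: on the smooth locus of $r$ one has $\abs{\nabla^{\mathrm{LC}}r}=1$, so $\frac{d}{dr}\int_{B_r}h=\int_{\del B_r}h$ by the coarea formula, and applying this to $h=\del_\nu f+f\,\Delta r$ together with $\int_{\del B_r}f=\int_{B_r}\div(f\,\nabla^{\mathrm{LC}}r)=\int_{B_r}(\del_\nu f+f\,\Delta r)$ gives the claim — both steps being legitimate since $\mathrm{Cut}(p)$ has measure zero. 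With $f=\norm{\phi}^2$, so that $\del_\nu f=2\langle\nabla^A_\nu\phi,\phi\rangle$, differentiating $r^{\,n-k-1}\kappa^2$ produces $(n-k-1)r^{n-k-2}\kappa^2+2r^{n-k-1}\kappa\kappa'$ on one side; splitting $\Delta r=\big(\Delta r-\tfrac{n-k-1}{r}\big)+\tfrac{n-k-1}{r}$ on the other makes $\tfrac{n-k-1}{r}\int_{\del B_r}\norm{\phi}^2$ cancel the term $(n-k-1)r^{n-k-2}\kappa^2$, leaving
\begin{align}
	2r^{n-k-1}\kappa\,\kappa' = 2\int_{\del B_r}\langle\nabla^A_\nu\phi,\phi\rangle + \int_{\del B_r}\big(\Delta r-\tfrac{n-k-1}{r}\big)\norm{\phi}^2 .
\end{align}
After dividing by $2r^{n-k-2}\kappa^2$, the second summand is exactly $D(r)$ (everything here is understood on the open set where $\kappa\neq0$, so that $N$ and $D$ are defined).

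Next I would rewrite the boundary integral using Green's identity for the metric connection $\nabla^A$ on $B_r$, namely $\int_{\del B_r}\langle\nabla^A_\nu\phi,\phi\rangle=\int_{B_r}\big(\norm{\nabla^A\phi}^2-\langle{\nabla^A}^\dagger\nabla^A\phi,\phi\rangle\big)$, and then substitute~\eqref{eq:dichotomy-main-assumption} for ${\nabla^A}^\dagger\nabla^A\phi$. This needs the pointwise algebraic identity $\tfrac12\langle\hodge[\hodge[\phi\wedge\phi]\wedge\phi],\phi\rangle=\norm{[\phi\wedge\phi]}^2$, which I would verify from the $\ad$‑invariance of the trace form and the defining property $\alpha\wedge\hodge\beta=\langle\alpha,\beta\rangle\vol$ of the Hodge star; it is the standard Weitzenböck‑type rearrangement used in the Kapustin–Witten literature and is valid in any dimension. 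The boundary integral then equals $\int_{B_r}\big(\norm{\nabla^A\phi}^2+\norm{[\phi\wedge\phi]}^2+\langle\Ric\phi,\phi\rangle\big)=r^{n-k-2}\kappa^2\,N(r)$, and dividing the displayed identity by $2r^{n-k-2}\kappa^2$ yields $\tfrac{r\kappa'}{\kappa}=N+D$, which is~\eqref{eq:dichotomy-derivative-of-kappa}.

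For the sign statements assume $\Ric\geq0$. Each of the three terms in the integrand of $N$ is then non‑negative, so $N\geq0$. For $D$, the Laplacian comparison (\autoref{prop:dichotomy-mean-curvature-comparison}) gives $\Delta r\leq\tfrac{n-1}{r}$, hence $\Delta r-\tfrac{n-k-1}{r}\leq\tfrac{k}{r}$; multiplying by $\norm{\phi}^2\geq0$, integrating over $\del B_r$ and dividing by $2r^{n-k-2}\kappa^2$ gives $D\leq k/2$. The two limits I would deduce from the mean‑curvature asymptotics of \autoref{prop:dichotomy-mean-curvature-comparison}, used uniformly over $\del B_r$: as $r\to0$, $\Delta r=\tfrac{n-1}{r}+\mathcal{O}(r)$ gives $\Delta r-\tfrac{n-k-1}{r}=\tfrac{k}{r}+\mathcal{O}(r)$ and hence $D(r)=\tfrac{k}{2}+\mathcal{O}(r^2)\to k/2$; as $r\to\infty$, the ALX$_k$ condition~\eqref{eq:dichotomy-alx-metric} forces $\sup_{\del B_r}\abs{\,r\,\Delta r-(n-k-1)\,}\to0$, so $\abs{D(r)}\leq\tfrac{r}{2}\sup_{\del B_r}\abs{\Delta r-\tfrac{n-k-1}{r}}\to0$.

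Finally, the two consequences follow from $\tfrac{d\kappa}{dr}=\tfrac{(N+D)\kappa}{r}$ together with $N\geq0$ and $\kappa\geq0$: near $r=0$ one has $N+D>0$ since $D\to k/2>0$ (for $k\geq1$; for $k=0$ the model end is Euclidean and the statement is the classical Almgren one), so $\kappa$, being non‑negative and not identically zero there, is positive and increasing on a small enough neighbourhood of $0$; and given $\epsilon>0$, choosing $R$ so large that $\abs{D}<\epsilon$ on $[R,\infty)$ makes $N+D\geq D>-\epsilon$, whence $\tfrac{d\kappa}{dr}\geq-\tfrac{\epsilon\kappa}{r}$ for $r\geq R$. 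Everything else being bookkeeping, the points that need genuine care are the two inputs feeding the formula — making the first‑variation and Green identities rigorous across $\mathrm{Cut}(p)$, and the pointwise Hodge/bracket identity for the commutator term — and, for the limits of $D$, the fact that the mean‑curvature asymptotics of \autoref{prop:dichotomy-mean-curvature-comparison} hold \emph{uniformly} on the spheres $\del B_r$; this last point, supplied precisely by the $L^\infty(\del B_r)$‑decay built into the ALX$_k$ definition, is the only place the ALX hypothesis enters, and I expect it to be where the care is needed.
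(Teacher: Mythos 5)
Your proof is correct and takes essentially the same approach as the paper's, merely packaged more conventionally: you differentiate $r^{n-k-1}\kappa^2 = \int_{\del B_r}\norm{\phi}^2$ via the first-variation identity $\frac{d}{dr}\int_{\del B_r}f = \int_{\del B_r}\bigl(\del_\nu f + f\,\Delta r\bigr)$ and then apply Green's identity for $\nabla^A$, whereas the paper routes the same computation through Lie derivatives of the top-form $\norm{\phi}^2\,\vol$ and a trace manipulation, arriving at the identical boundary term $\int_{\del B_r}\Delta r\,\norm{\phi}^2$ and ball integral. You also correctly flag the $k=0$ edge case of the near-$r=0$ consequence (where $\lim_{r\to 0}D = 0$ rather than $k/2>0$, so the paper's stated inference that $D$ is positive on a small interval does not apply as written); deferring to the classical Euclidean Almgren argument there is at least as careful as what the paper does, and this case plays no role in the asymptotic ($r\to\infty$) statement that the rest of the paper actually uses.
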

For notational convenience we will say that $\kappa$ is \emph{$\epsilon$-almost non-decreasing} whenever its derivative is bounded below by $\frac{d\kappa}{dr}\geq - \frac{\epsilon \kappa}{r}$.
\begin{proof}
Denote by $X$ the radial unit vector field on $B_r$ and observe that
\begin{align}
	\kappa^2(r)
	= \frac{1}{r^{n-k-1}} \int_{\del B_r} \norm{ \phi }^2
	= \frac{1}{r^{n-k-1}} \int_{B_r} \mathcal{L}_{X}\ \norm{ \phi }^2 \ .
\end{align}
By the product and Leibniz' integral rule the derivative is then given by
\begin{align}
	\frac{d}{dr} \kappa^2(r)
	&= - \frac{n-k-1}{r} \kappa^2 + \frac{1}{r^{n-k-1}} \int_{B_r} \mathcal{L}_X \circ \mathcal{L}_X\ \norm{ \phi }^2 \ .
\end{align}
We can write the integral on the right hand side equivalently as an integral over the trace of the (asymmetric) second Lie derivative ${\mathcal{L}^2_{Y,Z} := \mathcal{L}_{Y}\after \mathcal{L}_Z}$.
To see this denote by $(r,\theta_i)$ polar normal coordinates on $B_r$ and note that in these coordinates the metric is block-diagonal, i.e. $g = dr^2 + g_{S^{n-1}}$.
Since for any top-form $\omega$ the pullback of $\imath_{\del_{\theta_i}} \omega$ to the boundary of the geodesic ball is zero, one finds
\begin{align}
	\int_{B_r} \mathcal{L}^2_{X,X} \norm{\phi}^2
	= \int_{B_r} ( \mathcal{L}_{X,X}^2 + g_{S^{n-1}}^{ij} \mathcal{L}^2_{\del_{\theta_i},\del_{\theta_j}}) \norm{\phi}^2
	= \int_{B_r} \tr_{TM} \mathcal{L}^2 \norm{\phi}^2 \ .
\end{align}

Next, for any vector field $Y$ and top-form $\omega$ we may express the action of the Lie derivative in terms of the Levi-Civita connection as ${\mathcal{L}_Y \omega = \nabla_Y \omega + \div Y\ \omega}$.
Using this we may write the second Lie derivative as
\begin{align}
	\mathcal{L}_{Y,Z}^2\ \norm{\phi}^2
	&= ( \nabla_Y + \div Y )\ \nabla_Z \norm{\phi}^2 + \mathcal{L}_Y (\div Z\ \norm{\phi}^2) \ .
\end{align}
Furthermore, we use ad-invariance and metric compatibility to write $\nabla_Y \langle \phi, \phi \rangle = 2 \langle \phi, \nabla_Y^A \phi \rangle$, and use that the formal adjoint is given by ${\nabla_Y^A + \div Y = - (\nabla_Y^A)^\dagger}$.
This leads to
\begin{align}
	\int_{B_r} \tr_{TM} \mathcal{L}^2 \norm{\phi}^2
	&= 2 \int_{B_r} \left( \norm{ \nabla^A \phi }^2 - \langle \phi, {\nabla^A}^\dagger \nabla^A \phi \rangle \right) + \int_{B_r}  \tr_{TM}\left( \mathcal{L}_{\cdot} \div(\cdot) \ \norm{\phi}^2 \right) \\
	&= 2 \int_{B_r} \left( \norm{ \nabla^A \phi }^2 + \norm{ [\phi \wedge \phi] }^2 + \langle \phi, \Ric \phi \rangle \right) + \int_{\del B_r} \Delta r\ \norm{\phi}^2 \ ,
\end{align}
where we used the second order differential equation \eqref{eq:dichotomy-main-assumption} in the first term and that the only non-zero contribution in the second term contains the mean curvature of the geodesic sphere since $\div X = \Delta r$.

All in all, as long as $\kappa \neq 0$, the derivative is given by
\begin{align}
	\frac{d\kappa}{dr}
	&= \frac{1}{2 \kappa} \frac{d}{dr} \kappa^2
	= \frac{1}{\kappa r^{n-k-1}} \int_{B_r} \left( \norm{ \nabla^A \phi }^2 + \norm{ [\phi \wedge \phi] }^2 + \langle \phi, \Ric \phi \rangle \right)
	+ \frac{1}{2 \kappa r^{n-k-1}} \int_{\del B_r} \left( \Delta r - \tfrac{n-k-1}{r} \right) \norm{\phi}^2 \ ,
\end{align}
which upon identifying the terms on the right hand side with $N$ and $D$ is the desired result.

Now assume $\Ric \geq 0$.
On the one hand, $N$ is then clearly non-negative.
On the other hand, the results for $\Delta r$ from \autoref{prop:dichotomy-mean-curvature-comparison} immediately provide both the global upper bound and the limits of $D$.

Combining these facts with the formula for $\frac{d\kappa}{dr}$ leads to the conclusion that $\kappa$ is (almost) non-decreasing at both ends:
Since $D$ is continuous and $\lim_{r\to 0} D = k/2$, $D$ must be positive on some small interval $[0,s)$.
Thus, if $\kappa$ is non-zero somewhere in that interval then it is increasing.
The asymptotic bound for $r\to \infty$ works out similarly.
In that case there is an interval $[R,\infty)$ for any $\delta>0$ on which
\begin{align}
	D \geq - \frac{\delta}{1+\delta} \frac{n-k-1}{2} \ .
\end{align}
After a suitable choice of $\delta$ this provides the desired bound $\frac{d\kappa}{dr}\geq - \frac{\epsilon \kappa}{r}$ for any $\epsilon>0$, which concludes the proof.
\end{proof}

In the preceding proposition we already encountered lower bounds for $\frac{d\kappa}{dr}$ near $r=0$ and $r \to \infty$.
But when we keep track of $N$ it becomes clear that $\frac{d\kappa}{dr}$ satisfies stronger bounds than recorded so far.
This is the content of the following two corollaries.
The first records a global growth limitation, while the second determines asymptotic lower and upper bounds, both in dependence of the frequency function $N$.
\begin{corollary}\label{cor:dichotomy-kappa-global-grönwall}
Assume $\kappa\neq 0$ on $[r_0,r_1]$, then
\begin{align}
	\kappa(r_1)\ \leq \ \kappa(r_0)\ \exp \int_{r_0}^{r_1} \frac{N(t) + k/2}{t} dt\ .
\end{align}
\end{corollary}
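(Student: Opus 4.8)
The plan is to integrate the differential identity for $\kappa$ supplied by \autoref{prop:dichotomy-derivative-of-kappa}. Since $W^n$ is a gravitational instanton it is Ricci flat, so in particular $\Ric \geq 0$, and that proposition then gives the pointwise bound $D(t) \leq k/2$. Feeding this into the derivative formula $\frac{d\kappa}{dr} = \frac{(N+D)\kappa}{r}$, which is valid throughout the interval $[r_0,r_1]$ on which $\kappa$ does not vanish, I obtain
\[
	\frac{d\kappa}{dr} \leq \frac{\big(N(r) + k/2\big)\,\kappa(r)}{r}, \qquad r \in [r_0, r_1] .
\]

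Next I would divide by $\kappa(r) > 0$ to rewrite this as a bound on the logarithmic derivative,
\[
	\frac{d}{dr}\log \kappa(r) \leq \frac{N(r) + k/2}{r} ,
\]
which is the differential form of a Grönwall estimate. Integrating from $r_0$ to $r_1$ and exponentiating then yields exactly the claimed inequality
\[
	\kappa(r_1) \leq \kappa(r_0)\,\exp\!\int_{r_0}^{r_1}\frac{N(t)+k/2}{t}\,dt .
\]

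I do not expect a genuine obstacle here: all the substance is already contained in \autoref{prop:dichotomy-derivative-of-kappa}, and what remains is the standard logarithmic-derivative / Grönwall step. The only point needing a word of justification is the integrability of the right-hand side on $[r_0,r_1]$: by hypothesis $\kappa$ is continuous and strictly positive on the compact interval, hence bounded away from $0$ there, so $N$ is continuous on $[r_0,r_1]$, and one tacitly uses $r_0 > 0$ so that $1/t$ is bounded as well; the differentiability of $\kappa$ that the first step relies on is part of the statement of \autoref{prop:dichotomy-derivative-of-kappa} (with the cut-locus issues already absorbed there, as in the proof of \autoref{prop:dichotomy-mean-curvature-comparison}).
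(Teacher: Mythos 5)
Your proof is correct and follows the same route as the paper: invoke the bound $D \leq k/2$ from \autoref{prop:dichotomy-derivative-of-kappa}, obtain the differential inequality $\frac{d\kappa}{dr} \leq \frac{(N+k/2)\kappa}{r}$, and conclude via Grönwall. The paper simply cites Grönwall's inequality without spelling out the logarithmic-derivative step, and it leaves the standing hypothesis $\Ric \geq 0$ and the integrability of $N/t$ implicit, whereas you make both explicit — these are harmless elaborations, not a different argument.
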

\begin{proof}
Recall from \autoref{prop:dichotomy-derivative-of-kappa} that $D\leq k/2$, so the derivative of $\kappa$ is bounded by ${\frac{d\kappa}{dr} \leq \frac{(N+k/2)\kappa}{r}}$.
By Grönwall's inequality $\kappa$ then can't become larger than a solution of the underlying differential equation, which is the stated bound.
\end{proof}

\begin{corollary} \label{cor:dichotomy-kappa-asymptotic-grönwall}
Let $\epsilon > 0$ and $R$ be such that $\abs{D} \leq \epsilon$ on $[R,\infty)$.
If $\kappa$ has no zeroes in $[r_0,r_1]\subset[R,\infty)$ then it is bounded at $r_1$ from both sides as follows
\begin{align} \label{eq:dichotomy-kappa-asymptotic-grönwall}
	\kappa(r_0)\ \exp \int_{r_0}^{r_1} \frac{N(t)-\epsilon}{t} dt\
	\leq\ \kappa(r_1)\
	\leq\ \kappa(r_0)\ \exp \int_{r_0}^{r_1} \frac{N(t)+\epsilon}{t} dt\ .
\end{align}
Consequently, bounds for the frequency function on $[r_0,r_1]$ have the following effect:
\begin{itemize}
	\item if $a\leq N$ then $\kappa(r_1) \geq \big( \frac{r_1}{r_0} \big)^{a-\epsilon} \kappa(r_0)$
	\item if $N \leq b$ then $\kappa(r_1) \leq \big(\frac{r_1}{r_0} \big)^{b+\epsilon} \kappa(r_0)$
\end{itemize}
\end{corollary}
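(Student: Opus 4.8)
The plan is to exploit that \autoref{prop:dichotomy-derivative-of-kappa} provides an \emph{exact} first-order identity for $\frac{d\kappa}{dr}$, not merely an inequality, so that one may integrate the logarithmic derivative of $\kappa$ directly rather than invoking Grönwall. First I would note that $\kappa$ is continuous and has no zeroes on $[r_0,r_1]$; being non-negative, it is therefore strictly positive there, and by compactness of the interval it is bounded away from $0$. Hence $\log\kappa$ is a well-defined $C^1$ function on $[r_0,r_1]$, and dividing \eqref{eq:dichotomy-derivative-of-kappa} by $\kappa$ gives $\frac{d}{dr}\log\kappa = \frac{N+D}{r}$ pointwise on $[r_0,r_1]$. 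Integrating from $r_0$ to $r_1$ and exponentiating yields the exact relation $\kappa(r_1) = \kappa(r_0)\,\exp\!\int_{r_0}^{r_1}\frac{N(t)+D(t)}{t}\,dt$, all integrals being finite since on $[r_0,r_1]$ the functions $N$ and $D$ are bounded (their $\kappa^{-2}$ prefactor is bounded and the remaining integrals run over a region of finite volume).

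Next, since $[r_0,r_1]\subset[R,\infty)$ the hypothesis $\abs{D}\leq\epsilon$ holds pointwise on all of $[r_0,r_1]$, so $N-\epsilon\leq N+D\leq N+\epsilon$ there. Dividing by $t>0$ and integrating preserves these inequalities, and exponentiating then gives exactly \eqref{eq:dichotomy-kappa-asymptotic-grönwall}. For the two itemized consequences I would simply replace $N$ by its constant bound and use $\int_{r_0}^{r_1}\frac{dt}{t}=\log\frac{r_1}{r_0}$: if $a\leq N$ on $[r_0,r_1]$ then the left-hand integral in \eqref{eq:dichotomy-kappa-asymptotic-grönwall} is at least $(a-\epsilon)\log\frac{r_1}{r_0}$, whence $\kappa(r_1)\geq\big(\tfrac{r_1}{r_0}\big)^{a-\epsilon}\kappa(r_0)$; the case $N\leq b$ follows symmetrically from the right-hand integral.

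I do not expect a genuine obstacle. The only point deserving a word of care is that "$\kappa$ has no zeroes on $[r_0,r_1]$'' must be upgraded to "$\log\kappa\in C^1([r_0,r_1])$'' in order to pass from the pointwise formula for $\frac{d\kappa}{dr}$ to its integrated form via the fundamental theorem of calculus; everything after that is just monotonicity of the integral combined with monotonicity of $\exp$.
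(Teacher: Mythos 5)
Your proof is correct and takes essentially the same route as the paper: both start from the derivative formula of \autoref{prop:dichotomy-derivative-of-kappa}, bound $D$ by $\epsilon$ on $[r_0,r_1]$, and integrate. The only cosmetic difference is that you integrate the exact identity $\frac{d}{dr}\log\kappa = \frac{N+D}{r}$ directly and then apply $\abs{D}\leq\epsilon$ inside the exponential, whereas the paper first turns the derivative formula into a two-sided differential inequality and invokes Gr\"onwall; these are the same argument in slightly different clothing.
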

\begin{proof}
Since $\abs{D}\leq \epsilon$ and $\kappa$ is non-zero on $[r_0,r_1]$, its derivative is bounded in both directions as follows
\begin{align}
	\frac{(N-\epsilon)\ \kappa}{r} \leq \frac{d \kappa}{dr} \leq \frac{(N+\epsilon)\ \kappa}{r} \ .
\end{align}
Grönwall's inequality states that $\kappa$ is then bounded in either direction by the corresponding solutions of the underlying differential equations, which is exactly \eqref{eq:dichotomy-kappa-asymptotic-grönwall}.

Clearly, if the frequency function is bounded below by some $a\geq 0$ the first inequality in \eqref{eq:dichotomy-kappa-asymptotic-grönwall} reduces to
\begin{align}
	\kappa(r_1) \geq \kappa(r_0)\ \exp \int_{r_0}^{r_1} \frac{a-\epsilon}{s}\; ds
	= \left( \frac{r_1}{r_0} \right)^{a-\epsilon} \kappa(r_0)\ .
\end{align}
The same argument, but based on the second inequality, yields the corresponding upper bound if $N$ is bounded from above.
\end{proof}

We will later also need the derivative of $N$, which is given directly by the product and Leibniz' integral rule.
\begin{align}
	\frac{d}{dr} N
	&= \frac{1}{r^{n-k-2} \kappa^2}\ \int_{\del B_r} \bigg( \lVert \nabla^A \phi \rVert^2 + \norm{ [\phi \wedge \phi] }^2 + \langle \Ric \phi, \phi \rangle \bigg)
	- \big( n-k-2 + 2 (N + D) \big) \frac{N}{r}
	\label{eq:dichotomy-derivative-of-N}
\end{align}
As an immediate consequence, we see that if $N$ is ever small, then it can't have been very much larger at nearby smaller radii $s < r$.
This observation is recorded more precisely in the following proposition.
\begin{proposition}\label{prop:dichotomy-small-N}
Assume $\Ric\geq 0$.
If $N \leq 1$ on some interval $[r_0, r_1]$, then $N(r_0) \leq \Bigl( \frac{r_1}{r_0} \Bigr)^{n} N(r_1)$.
Moreover, whenever $N(r) < 1$ at some $r \in (0, \infty)$ then $N \leq 1$ on the interval $[ N(r)^{1/n}\: r , r]$.
\end{proposition}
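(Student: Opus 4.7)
My plan rests almost entirely on the derivative formula \eqref{eq:dichotomy-derivative-of-N} for $N$, together with the bound $D\leq k/2$ recorded in \autoref{prop:dichotomy-derivative-of-kappa}. Since $\Ric\geq 0$, the integrand $\norm{\nabla^A\phi}^2+\norm{[\phi\wedge\phi]}^2+\langle\Ric\phi,\phi\rangle$ is pointwise non-negative, so the boundary integral in \eqref{eq:dichotomy-derivative-of-N} is non-negative. Dropping it gives the one-sided differential inequality
\begin{align}
	\frac{dN}{dr} \geq -\bigl( n-k-2 + 2(N+D) \bigr)\frac{N}{r}\ .
\end{align}

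First I would prove the global claim. On $[r_0,r_1]$, the hypothesis $N\leq 1$ combined with $D\leq k/2$ yields the uniform bound $n-k-2+2(N+D)\leq n-k-2+2+k=n$, so that $\frac{dN}{dr}\geq -\tfrac{nN}{r}$ wherever $N>0$. Away from zeroes of $N$, this is $\frac{d}{dr}\log N\geq -\tfrac{n}{r}$, and integrating from $r_0$ to $r_1$ gives $\log(N(r_1)/N(r_0))\geq -n\log(r_1/r_0)$, i.e.\ $N(r_0)\leq (r_1/r_0)^n N(r_1)$. If $N$ has an interior zero the statement becomes vacuous (the right-hand side is then only improved by replacing the interval with a subinterval on which $N>0$), so the general case reduces to this one.

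For the second claim I would argue by contradiction using continuity of $N$ (which is inherited from that of $\kappa$ in the region where the latter does not vanish; since $N(r)<1$ presupposes $\kappa(r)\neq 0$, a neighborhood of $r$ is automatically safe). Set $s_0 = N(r)^{1/n}\,r$ and suppose there exists $s\in[s_0,r)$ with $N(s)>1$. Let $s^\ast=\sup\{s\in[s_0,r):N(s)\geq 1\}$; by continuity $N(s^\ast)\geq 1$, while $N\leq 1$ on $[s^\ast,r]$ by choice of the supremum. Applying the first part to the interval $[s^\ast,r]$ gives $1\leq N(s^\ast)\leq (r/s^\ast)^n N(r)$, whence $s^\ast\leq r\,N(r)^{1/n}=s_0$, contradicting $s^\ast\geq s_0$ and $s^\ast<r$ unless in fact $N\leq 1$ everywhere on $[s_0,r]$.

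The main obstacle I anticipate is a minor bookkeeping point rather than a real difficulty: ensuring that $\kappa$ does not vanish on the interval $[s_0,r]$ so that $N$ is honestly defined and continuous there. This can be handled by observing that $N(r)<1$ forces $\kappa(r)\neq 0$, and that the supremum argument above never reaches a point where $\kappa$ vanishes (were $\kappa$ to vanish at some $s^\star\in(s_0,r]$, continuity of $\kappa$ together with the almost-monotonicity from \autoref{prop:dichotomy-derivative-of-kappa} would force $\kappa$ to vanish throughout a neighborhood of $s^\star$, pushing the analysis to a subinterval where the same inequality applies). Everything else is just Grönwall-style integration of a one-sided ODE inequality.
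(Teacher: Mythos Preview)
Your proposal is correct and follows essentially the same route as the paper: drop the non-negative boundary term in \eqref{eq:dichotomy-derivative-of-N}, use $N\leq 1$ and $D\leq k/2$ to get $\frac{dN}{dr}\geq -\frac{nN}{r}$, and apply Grönwall. For the second part, the paper gives a terser continuity/bootstrap argument (take a small interval on which $N\leq 1$, apply the first part, observe the bound stays $\leq 1$ down to $N(r)^{1/n}r$), whereas your supremum-and-contradiction formulation makes the bootstrap explicit; the two are equivalent. Your remark about $\kappa$ possibly vanishing is a point the paper does not address at this stage.
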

\begin{proof}
Since $\Ric$ is non-negative the same is true for the first term in \eqref{eq:dichotomy-derivative-of-N}.
Moreover, in that case $D\leq \frac{k}{2}$ by \autoref{prop:dichotomy-derivative-of-kappa}.
Assume now that $N \leq 1$ on all of $[r_0,r_1]$.
Then $N$ satisfies the following differential inequality for any $r\in [r_0,r_1]$
\begin{align}
	\frac{dN}{dr} \geq - \frac{n\: N}{r}\ .
\end{align}
Grönwall's inequality states that then for any pair $s\leq r$ in $[r_0, r_1]$ the following inequality holds
\begin{align}\label{eq:dichotomy-frequency-function-grönwall}
	N(r) \geq \left( \frac{s}{r} \right)^{n} N(s)\ ,
\end{align}
which proves the first part of the statement.

Now assume $N(r) < 1$ for some $r \in (0,\infty)$.
By continuity $r$ must be contained in some interval $[r_0,r_1]$ on which $N\leq 1$, so we can use the preceding inequality in the form $N(s) \leq (r/s)^{n} N(r)$.
The right hand side is less than or equal to $1$ as long as $s \geq  N(r)^{1/n}\: r$.
\end{proof}

%%%%%%%%%%%%%%%%%%%%%%%%%%%%%%%%%%%%%%%%%%%%%%%%%%%%%%%%%%%%%%%%%%%%%%%%%%%%%%%%%%%%%%%%%%%%%%%%%%%%%%%%%%%%%%%%%%%%%%%%%%%%%%%%%%%%%%%%%%%%%%%%%%%%%%%%%%%%%%%%%%

\section{Asymptotically Unique Continuation}
\label{sec:dichotomy-unique-continuation}

Any function that is both non-negative and non-decreasing (trivially) has the following property: if it is non-zero at a particular point $r_0$, it will remain non-zero for any subsequent point $r > r_0$.
Although $\kappa$ is not non-decreasing, there is some (large) radius $R$ beyond which it behaves in that same way.
This is a consequence of the fact that the decay rate of $\kappa$ becomes arbitrarily small at infinity.

To make this precise, recall that $\kappa$ is continuous, non-negative, and $\epsilon$-almost non-decreasing at large radius.
With respect to the last property fix some $\epsilon>0$ with associated radius $R\geq 0$.
Assume there is an $r_1 \in [R,\infty)$ at which $\kappa(r_1) \neq 0$.
This is the case, for example, if $\kappa$ is not asymptotically equivalent to the zero function.
Since $\kappa$ is $\epsilon$-almost non-decreasing, \autoref{cor:dichotomy-kappa-asymptotic-grönwall} provides a strictly positive lower bound for any larger radius $r\geq r_1$
\begin{align}
	\kappa(r) \geq \left( \frac{r_1}{r} \right)^{\epsilon} \kappa(r_1)\ ,
\end{align}
which prevents $\kappa$ from vanishing at any larger radius, at least as along as $r_1\neq 0$.
Note that, if $D$ is bounded from below, then there is a choice of $\epsilon$ for which $R=0$ such that the conclusion holds for any $r_1 \in (0,\infty)$.
In any case, the set on which $\kappa$ is non-zero must include an interval $(r_0, \infty)$ with some (possibly infinite) $r_0\geq 0$.

Let us now investigate the behaviour near $r_0$, where the growth rate of $\kappa$ is controlled by the frequency function $N$ and the mean curvature deviation $D$.
Observe that if $r_0\neq 0$ and $N$ is bounded from above on $(r_0, r_1]$, then the assumptions $\kappa(r_0)=0$ and $\kappa(r_1)\neq 0$ lead to a contradiction.
To see this use \autoref{cor:dichotomy-kappa-global-grönwall} with the upper bound $N\leq b$, which yields
\begin{align}
	\kappa(r) \geq \left( \frac{r}{r_1} \right)^{b+k/2} \kappa(r_1) \quad \text{for all } r \in (r_0, r_1]\ .
\end{align}
The right hand side is strictly positive and thus prevents $\kappa$ from vanishing at $r_0$.

This is an instance of Aronszajn's unique continuation theorem, which states that a non-trivial function that satisfies a second order, elliptic differential inequality cannot exhibit zeroes of infinite order \cite{Aronszajn1957}.
A particular consequence of this is that a non-negative and non-decreasing such function on $(0,\infty)$ is then either identically zero or strictly positive across the entire domain.

It was noted by Taubes that $\kappa$ -- in fact its defining integral $\int_{\del B_r} \norm{\phi}^2$ -- satisfies Aronszajn's unique continuation theorem \cite[Sec. 3]{Taubes2017}.
We obtain the following, slightly weaker version of the preceding statement:
\begin{lemma} \label{lem:dichotomy-unique-continuation}
Assume $\operatorname{Ric}\geq 0$ and that sectional curvature is bounded from below.
There is a radius $R\geq 0$, such that if $\kappa$ is non-zero at any point in $[R,\infty)$, then it is strictly positive on all of $(0,\infty)$.
Moreover, $R=0$ if the mean curvature deviation $D$ is bounded from below.
\end{lemma}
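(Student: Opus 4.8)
The plan is to show directly that, as soon as $\phi\not\equiv 0$, the function $\kappa$ cannot vanish at any interior radius; since the hypothesis "$\kappa$ is non-zero at some point of $[R,\infty)$" forces $\phi\not\equiv 0$, this already gives $\kappa>0$ on all of $(0,\infty)$ -- for every choice of $R$, so that the lemma, including its addendum, holds with $R=0$. Accordingly, suppose $\phi\not\equiv 0$ but $\kappa(r_0)=0$ for some $r_0\in(0,\infty)$; I aim for a contradiction.

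From $\kappa(r_0)=0$ we get $\int_{\del B_{r_0}}\norm{\phi}^2=0$, so $\phi$ vanishes on $\del B_{r_0}$ and is thus an admissible test field on the geodesic ball $B_{r_0}$. Pairing \eqref{eq:dichotomy-main-assumption} with $\phi$, integrating over $B_{r_0}$ -- the boundary term in Green's formula dropping out because $\phi|_{\del B_{r_0}}=0$ -- and using the pointwise identity $\tfrac12\langle\phi,\hodge[\hodge[\phi\wedge\phi]\wedge\phi]\rangle=\norm{[\phi\wedge\phi]}^2$ implicit in the proof of \autoref{prop:dichotomy-derivative-of-kappa}, one obtains $\int_{B_{r_0}}(\norm{\nabla^A\phi}^2+\norm{[\phi\wedge\phi]}^2+\langle\phi,\Ric\phi\rangle)=0$. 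Since $\Ric\geq 0$, each of the three integrands is non-negative, hence all vanish; in particular $\nabla^A\phi\equiv 0$ on $B_{r_0}$, so $\norm{\phi}$ is constant on the connected set $B_{r_0}$ and, being $0$ on $\del B_{r_0}$, vanishes identically. Thus $\phi$ vanishes on the non-empty open set $B_{r_0}$; reading \eqref{eq:dichotomy-main-assumption} as a linear elliptic equation ${\nabla^A}^\dagger\nabla^A\phi=-V\phi$ with $V$ a locally bounded potential (of size controlled by $\norm{\phi}^2$ and $\Ric$), Aronszajn's unique continuation theorem \cite{Aronszajn1957} forces $\phi\equiv 0$ on the connected manifold $W$, contradicting $\phi\not\equiv 0$. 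Hence $\kappa>0$ on $(0,\infty)$. One could instead argue as the discussion preceding the lemma suggests: the $\epsilon$-almost monotonicity of \autoref{prop:dichotomy-derivative-of-kappa} (via \autoref{cor:dichotomy-kappa-asymptotic-grönwall}) makes $\kappa$ positive on $[r_1,\infty)$ once it is non-zero at some $r_1\in[R,\infty)$, after which the frequency-function form of Aronszajn used by Taubes \cite[Sec. 3]{Taubes2017}, combined with the growth formula \eqref{eq:dichotomy-derivative-of-kappa} and the global Grönwall estimate of \autoref{cor:dichotomy-kappa-global-grönwall}, pushes the left endpoint of the positivity interval down to $0$ -- a route with the advantage of adapting to finer quantities such as the individual components of $\phi$.

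The one step requiring genuine care is the integration by parts on $B_{r_0}$: one has to know that $\phi$ is regular enough, which follows from the standing hypothesis that $\phi$ is locally square-integrable with enough derivatives -- so $\phi$ has a square-integrable covariant derivative on $B_{r_0}$, with square-integrable restriction to $\del B_{r_0}$, and the cubic term is integrable by Sobolev embedding -- and that the geodesic ball $B_{r_0}$, though only a Lipschitz (not smooth) domain once $r_0$ exceeds the cut distance, still has finite perimeter, so that the divergence theorem applies. The remaining ingredients -- the pointwise bracket identity, the sign $\langle\phi,\Ric\phi\rangle\geq 0$ coming from $\Ric\geq 0$, and the elementary open-set form of unique continuation for an elliptic equation with a bounded potential -- are routine; sectional curvature bounded below is not needed for this argument, though it enters the Taubes-style alternative and is used elsewhere in the paper.
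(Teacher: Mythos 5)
Your argument is correct, and it is genuinely different from — and in fact stronger than — the paper's. The paper deliberately proves only the "slightly weaker" version stated in the lemma: its argument is the discussion preceding the lemma, namely that the $\epsilon$-almost non-decreasing property of $\kappa$ (\autoref{prop:dichotomy-derivative-of-kappa}, \autoref{cor:dichotomy-kappa-asymptotic-grönwall}) keeps $\kappa$ positive on $[r_1,\infty)$ once it is non-zero at some $r_1\geq R$, after which a Grönwall argument with a bound on the frequency function $N$ extends the positivity interval back to $0$. This route accounts both for the provisional $R\geq 0$ in the statement and for the remark that $\kappa$ could in principle be compactly supported in a region where the mean curvature is still large: the frequency-function proof simply cannot rule that out. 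Your integration-by-parts-and-Aronszajn argument recovers the stronger statement that the paper explicitly attributes to Taubes and declines to re-prove — if $\kappa(r_0)=0$ for any $r_0>0$ then $\phi\equiv 0$ globally — and therefore shows $R=0$ unconditionally, without the caveat about $D$ and without using the lower sectional-curvature bound. In effect you are proving (and correctly using) the observation that the paper postpones to the first paragraph of the proof of \autoref{thm:dichotomy}, where the same "integrate $\nu$ over a ball on which $\phi$ vanishes at the boundary" computation is carried out.

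Two points worth having in view. First, the trade-off: the paper's almost-monotonicity/Grönwall machinery is not wasted effort — it is exactly what feeds \autoref{lem:dichotomy-frequency-function-bounds} and \autoref{lem:dichotomy-weighted-frequency-function-bounds}, which produce the \emph{quantitative} bounds the dichotomy proof needs, whereas the Aronszajn route only gives the qualitative "vanish or not" alternative. Second, your one flagged technical step — integration by parts on $B_{r_0}$ when $r_0$ exceeds the cut distance — is indeed the only delicate point. It is handled, as you say, by the finite-perimeter/Gauss-Green theorem: $r(\cdot)=d(p,\cdot)$ is Lipschitz, so $B_{r_0}$ has finite perimeter and the boundary term $\int_{\partial B_{r_0}}\langle\iota_\nu \nabla^A\phi,\phi\rangle$ is well-defined and vanishes once $\phi|_{\partial B_{r_0}}=0$ (which follows from $\int_{\partial B_{r_0}}\lVert\phi\rVert^2=0$ and continuity of $\phi$). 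With that in place the chain $\int_{B_{r_0}}\lVert\nabla^A\phi\rVert^2 = -\int_{B_{r_0}}\bigl(\lVert[\phi\wedge\phi]\rVert^2+\langle\phi,\operatorname{Ric}\phi\rangle\bigr)\leq 0$ forces $\nabla^A\phi\equiv 0$ on $B_{r_0}$, hence $\phi\equiv 0$ on $B_{r_0}$, and then unique continuation for the elliptic equation with locally bounded potential kills $\phi$ globally — exactly as you wrote.
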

Here it is possible that $\kappa$ is compactly supported, but only as long as its support is restricted to a region where the mean curvature of the geodesic spheres is (still) large.
In view of the discussion above, the Lemma also follows directly from a proof that $N$ is a priori bounded on any interval $(r_0,r_1]$ in which $\kappa$ does not have zeroes.

%%%%%%%%%%%%%%%%%%%%%%%%%%%%%%%%%%%%%%%%%%%%%%%%%%%%%%%%%%%%%%%%%%%%%%%%%%%%%%%%%%%%%%%%%%%%%%%%%%%%%%%%%%%%%%%%%%%%%%%%%%%%%%%%%%%%%%%%%%%%%%%%%%%%%%%%%%%%%%%%%%
\section{Slow Growth and  Bounded Frequency}
\label{sec:dichotomy-slow-growth-bounded-frequency}

We have previously seen that an upper bound for $N$ leads to bounded growth of $\kappa$.
The goal of this section is to show that the converse is true when $r\to\infty$.
More precisely, we show that whenever $\kappa$ grows slower than $\mathcal{O}(r^a)$ between two large radii, then $N$ must have been bounded from above on an interval leading up to the violation.
Note that such violations must occur for arbitrarily large radii when $\kappa$ is not asymptotically bounded below by $r^a$, which is the situation of the second alternative in \autoref{bigthm:dichotomy}.
Accordingly, the upcoming lemma and its refinement in \autoref{sec:dichotomy-correlation-tensor} play a crucial role in the proof of the main theorem.

\begin{lemma}\label{lem:dichotomy-frequency-function-bounds}
Assume $\kappa$ is not asymptotically zero.
Fix an $\epsilon > 0$ and denote by $R$ the radius beyond which $\abs{D}\leq \epsilon$.
If there is a pair of radii $r_0 \leq r_1$ in $[R,\infty)$ for which $\kappa(r_1) \leq \left(\frac{r_1}{r_0}\right)^{a-\epsilon} \kappa(r_0)$, then there exists a radius $t \in [r_0, r_1]$ such that
\begin{enumerate}
	\item $N(t) \leq a$.
\end{enumerate}
Moreover, if $a<1$ the following holds on the interval $[\tilde R, t]$, where ${\tilde R=\max\bigl( a^{\frac{1}{2n}}\: t,\; R\bigr)}$.
\begin{enumerate}[resume]
	\item $N < \sqrt{a}$,
	\item $\kappa \geq a^{\frac{\sqrt{a}+\epsilon}{2n}} \kappa(t)$.
\end{enumerate}
\end{lemma}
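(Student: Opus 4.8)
The plan is to read off all three assertions from the two comparison principles of \autoref{sec:dichotomy-frequency-function}: \autoref{cor:dichotomy-kappa-asymptotic-grönwall}, which converts bounds on $N$ into two‑sided bounds on $\kappa$, and \autoref{prop:dichotomy-small-N}, which propagates smallness of $N$ towards smaller radii. Two preliminary remarks set the stage. First, since $\kappa$ is not asymptotically zero it is non‑zero at some point of $[R,\infty)$, so \autoref{lem:dichotomy-unique-continuation} shows $\kappa>0$ on all of $(0,\infty)$; in particular $\kappa$ has no zeroes on any subinterval of $[R,\infty)$, and the two corollaries of \autoref{sec:dichotomy-frequency-function} may be applied freely on every interval used below. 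Second, $[\tilde R,t]\subseteq[R,\infty)$ because $\tilde R\geq R$ and $t\leq r_1$, so $\abs{D}\leq\epsilon$ holds throughout $[\tilde R,t]$.

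For (i) I argue by contradiction, assuming $r_0<r_1$ (if $r_0=r_1$ there is nothing to prove). Suppose $N>a$ everywhere on $[r_0,r_1]$; since $N$ is continuous on this compact interval it attains there a minimum $m>a$. As $\abs{D}\leq\epsilon$ on $[r_0,r_1]$, the lower estimate in \eqref{eq:dichotomy-kappa-asymptotic-grönwall} together with $N\geq m$ gives $\kappa(r_1)\geq\kappa(r_0)(r_1/r_0)^{m-\epsilon}>\kappa(r_0)(r_1/r_0)^{a-\epsilon}$, using $r_1/r_0>1$ and $m>a$. This contradicts the hypothesis $\kappa(r_1)\leq(r_1/r_0)^{a-\epsilon}\kappa(r_0)$, so there is a $t\in[r_0,r_1]$ with $N(t)\leq a$; fix such a $t$.

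For (ii) assume $a<1$, so $N(t)\leq a<1$. The second statement of \autoref{prop:dichotomy-small-N} then gives $N\leq1$ on $[N(t)^{1/n}t,\,t]$, and since $a<1$ forces $N(t)^{1/n}\leq a^{1/n}\leq a^{1/(2n)}$, this interval contains $[\tilde R,t]$. Hence for every $s\in[\tilde R,t]$ we have $N\leq1$ on $[s,t]$, and the comparison \eqref{eq:dichotomy-frequency-function-grönwall} of \autoref{prop:dichotomy-small-N} yields $N(s)\leq(t/s)^{n}N(t)\leq a\,(t/s)^{n}$. Because $s\geq\tilde R\geq a^{1/(2n)}t$ we get $(t/s)^{n}\leq a^{-1/2}$ and therefore $N(s)\leq\sqrt a$ on $[\tilde R,t]$; in fact the bound is strict, since at $s=t$ one has $N(t)\leq a<\sqrt a$ (as $0<a<1$), while for $s<t$ it follows either from $s>a^{1/(2n)}t$, which makes $(t/s)^{n}<a^{-1/2}$, or from the strictness of \eqref{eq:dichotomy-frequency-function-grönwall} (forced by $N<1$ near $t$). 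This establishes (ii).

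For (iii) fix $s\in[\tilde R,t]$ and apply the upper bound of \autoref{cor:dichotomy-kappa-asymptotic-grönwall} on $[s,t]$ with $b=\sqrt a$ — legitimate by (ii) and $\abs{D}\leq\epsilon$ on $[s,t]\subseteq[R,\infty)$ — obtaining $\kappa(t)\leq(t/s)^{\sqrt a+\epsilon}\kappa(s)$, i.e.\ $\kappa(s)\geq(s/t)^{\sqrt a+\epsilon}\kappa(t)$. Since $s/t\geq\tilde R/t\geq a^{1/(2n)}$ and the exponent $\sqrt a+\epsilon$ is positive, this gives $\kappa(s)\geq a^{(\sqrt a+\epsilon)/(2n)}\kappa(t)$, which is (iii). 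I do not expect a serious obstacle here: the argument is essentially bookkeeping with the two Grönwall comparisons on consistently chosen nested intervals. The one point demanding genuine care is the inclusion $[\tilde R,t]\subseteq[N(t)^{1/n}t,\,t]$ in step (ii), which rests on the elementary monotonicity $a^{1/(2n)}\geq a^{1/n}$ valid only for $a<1$ — precisely why the sharper conclusions (ii) and (iii) are stated only in that range — together with the small extra care needed to upgrade $\leq\sqrt a$ to the strict bound.
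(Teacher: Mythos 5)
Your proof is correct and follows essentially the same route as the paper's: contradiction via the lower Grönwall estimate of \autoref{cor:dichotomy-kappa-asymptotic-grönwall} for (i), propagation of the bound on $N$ via \autoref{prop:dichotomy-small-N} for (ii), and the upper Grönwall estimate for (iii). The only genuine additions are the explicit invocation of \autoref{lem:dichotomy-unique-continuation} to guarantee $\kappa>0$ on the intervals used (a point the paper leaves implicit), and the attempt to upgrade the bound in (ii) to a strict inequality — the latter is not fully airtight at the endpoint $s=\tilde R=a^{1/(2n)}t$ when $N(t)=a$, but the paper's own proof only establishes $N\leq\sqrt a$ there as well, so this reflects a small imprecision in the statement rather than a flaw in your argument.
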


\begin{proof}
To see \textit{(i)} assume to the contrary that $N > a $ on all of $[r_0, r_1]$.
Then the first bullet in \autoref{cor:dichotomy-kappa-asymptotic-grönwall} states that
\begin{align}
	\kappa(r_1) > \left(\frac{r_1}{r_0}\right)^{a-\epsilon} \kappa(r_0)\ ,
\end{align}
which violates the assumption that $\kappa(r_1)$ satisfies exactly the opposite inequality.
Hence, there is a $t \in [r_0,r_1]$ at which $N(t) \leq a$.

Assume now that $a < 1$ and note that then the same is true for $N(t)$.
In that case we conclude via the second part of \autoref{prop:dichotomy-small-N} that $N\leq 1$ on $[N(r)^{1/n}\: t , t]$.
Since $N(t)\leq a < \sqrt{a}$, this interval contains as a subinterval $[a^{1/2n}\: t , t]$.
Then the first part of \autoref{prop:dichotomy-small-N} for ${s \in [a^{1/2n}\: t , t]}$ yields
\begin{align}
	N(s) \leq \left(\frac{r}{s}\right)^{n-k} N(t)
	\leq \frac{1}{\sqrt{a}} N(t)
	\leq \sqrt{a}\ ,
\end{align}
which proves that the same is true on the (possibly smaller) interval $[\tilde R , t]$ with ${\tilde R:=\max\bigl( a^{1/2n}\: t, R\bigr)}$.

Since $N\leq \sqrt{a}$ on $[\tilde R , t ]$ the second bullet of \autoref{cor:dichotomy-kappa-asymptotic-grönwall} provides the bound
\begin{align}
	\kappa(t)
	\leq \left(\frac{r}{t}\right)^{\sqrt{a}+\epsilon} \kappa(r)
	\leq a^{-\frac{\sqrt{a}+\epsilon}{2n}} \kappa(r)\ ,
\end{align}
where in the last step we used $a^{1/2n}\: t \leq \tilde R \leq r$ for any $r\in [\tilde R, t]$.
\end{proof}

%%%%%%%%%%%%%%%%%%%%%%%%%%%%%%%%%%%%%%%%%%%%%%%%%%%%%%%%%%%%%%%%%%%%%%%%%%%%%%%%%%%%%%%%%%%%%%%%%%%%%%%%%%%%%%%%%%%%%%%%%%%%%%%%%%%%%%%%%%%%%%%%%%%%%%%%%%%%%%%%%%

\section{The Correlation Tensor}
\label{sec:dichotomy-correlation-tensor}

There is an $\Omega_p^1 \otimes \Omega_p^1$-valued function $T$ that refines $\kappa^2$ to the effect that it resolves the behaviour of the components of $\phi$.
Note that, being a one-form, $\phi$ can be evaluated in particular on the covariantly constant unit vector field on $B_r(p)$ that is defined by parallel transport of a unit vector $v \in T_p W$ along radial geodesics.
The output is a smooth function $\phi_v$ on $B_r(p)\setminus \operatorname{Cut}(p)$ that captures the evolution of the $v$-component of $\phi$ along the geodesics emanating from $p$.
This allows the definition of what we will call the correlation tensor $T: W^4 \times (0,\infty) \to \Omega_p^1 \otimes \Omega_p^1$.
Its value for $v,w \in T_p W$ is defined by
\begin{align}
	T(p,r)(v, w) &= \frac{1}{r^{n-k-1}} \int_{\del B_r(p)} \langle \phi_v, \phi_w \rangle\ .
	\label{eq:dichotomy-correlation-tensor}
\end{align}
Note, in particular, that $\tr_{T_p W} T(p,r) = \kappa^2(p,r)$, while the induced quadratic form $\kappa_v^2 := T(v,v)$ returns a version of the function $\kappa^2$ that is based on the component $\phi_v$.
Just like $\kappa_v(p,r)$ has an interpretation as the mean value of $\phi_v$ on the geodesic sphere, the value of $T(v,w)$ measures the correlation between the components $\phi_v$ and $\phi_w$ on the geodesic sphere; hence its name.

An important observation is that $\kappa_v$ satisfies essentially the same properties as $\kappa$.
As a short motivation of that fact observe that if $\tilde \Delta_A \phi = 0$ and $v$ denotes the covariantly constant vector field described above, then also ${\imath_v \tilde \Delta_A \phi = \tilde \Delta_A \phi_v = 0}$.
So it is reasonable to expect that $\kappa_v$ behaves like a harmonic function in exactly the same way that $\kappa$ does.

To make this more precise, if $\phi$ satisfies our main assumption \eqref{eq:dichotomy-main-assumption} then $\phi_v$ satisfies the following analogous second order equation
\begin{align}
	{\nabla^A}^\dagger \nabla^A \phi_v + \frac{1}{2} \hodge [ \hodge [\phi \wedge \phi_v] \wedge \phi] + \Ric(\phi)(v) = 0\ .
	\label{eq:dichotomy-main-assumption-weighted}
\end{align}
As a consequence the derivative of $\kappa_v$ comes with its own version of the frequency function and mean curvature deviation, denoted $N_v$ and $D_v$.
\begin{align}
	\frac{d\kappa_v}{dr} = \frac{N_v + D_v}{r} \kappa_v
	\label{eq:dichotomy-derivative-of-weighted-kappa}
\end{align}
The functions $N_v$ and $D_V$ are given by essentially the same expressions as before, but with $\phi$ replaced by $\phi_v$ as follows.
\begin{align} \label{eq:dichotomy-weighted-frequency-function}
	N_v &= \frac{1}{r^{n-k-2} \kappa_v^2} \int_{B_r} \left( \lVert \nabla^A \phi_v \rVert^2 + \norm{[\phi\wedge\phi_v]}^2 + \left\langle \Ric(\phi)(v), \phi_v \right\rangle \right) \\
	D_v &= \frac{1}{r^{n-k-1} \kappa_v^2} \int_{\del B_r} \left( \Delta r - \frac{n-k-1}{r} \right) \norm{\phi_v}^2
\end{align}
\begin{proposition}
Let $v \in T_pW$. All previous results hold verbatim when we replace $\kappa$ and $N$ by $\kappa_v$ and $N_v$, respectively.
\end{proposition}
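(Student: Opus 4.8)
The plan is to observe that the whole cascade of results from \autoref{prop:dichotomy-derivative-of-kappa} onward was extracted from a short list of structural facts about $\phi$, and that each fact has a literal analogue for $\phi_v$, so that every subsequent proof may simply be reread with the subscript-$v$ objects in place of the original ones. The first task is to nail down the two facts already announced just above the statement: that $\phi_v$ solves \eqref{eq:dichotomy-main-assumption-weighted}, and that $\kappa_v$ obeys \eqref{eq:dichotomy-derivative-of-weighted-kappa} with $N_v$ and $D_v$ as in \eqref{eq:dichotomy-weighted-frequency-function}. I would obtain \eqref{eq:dichotomy-main-assumption-weighted} by contracting \eqref{eq:dichotomy-main-assumption} with the radially parallel extension $V$ of $v$, using that $\nabla^A_X V = 0$ along radial geodesics; the term $\tfrac{1}{2}\hodge[\hodge[\phi\wedge\phi]\wedge\phi]$ contracts to $\tfrac{1}{2}\hodge[\hodge[\phi\wedge\phi_v]\wedge\phi]$, and the same algebraic identity from ad-invariance and the Hodge-star relations used in the unweighted case gives $\tfrac{1}{2}\langle\phi_v,\hodge[\hodge[\phi\wedge\phi_v]\wedge\phi]\rangle=\norm{[\phi\wedge\phi_v]}^2$. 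The derivative formula \eqref{eq:dichotomy-derivative-of-weighted-kappa} then comes out of the Lie-derivative computation in the proof of \autoref{prop:dichotomy-derivative-of-kappa} with $\norm{\phi}^2$ replaced by $\norm{\phi_v}^2$: the step $\mathcal{L}_X\norm{\phi_v}^2=2\langle\phi_v,\nabla^A_X\phi_v\rangle$ uses only metric compatibility and ad-invariance, the integration by parts against $\div X=\Delta r$ is unchanged, and substituting \eqref{eq:dichotomy-main-assumption-weighted} produces precisely the integrands in \eqref{eq:dichotomy-weighted-frequency-function}.

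Next I would check that the sign information the downstream arguments rely on survives. When $\Ric\geq0$ the first two terms $\norm{\nabla^A\phi_v}^2$ and $\norm{[\phi\wedge\phi_v]}^2$ in the integrand of $N_v$ are manifestly non-negative, and the Ricci contribution $\langle\Ric(\phi)(v),\phi_v\rangle$ vanishes on Ricci-flat $W^n$, which is the setting of \autoref{bigthm:dichotomy} and \autoref{bigthm:dichotomy2}; hence $N_v\geq0$ there just as $N\geq0$. The factor $\Delta r-\tfrac{n-k-1}{r}$ in $D_v$ is the same one appearing in $D$, so \autoref{prop:dichotomy-mean-curvature-comparison} yields verbatim the bound $D_v\leq k/2$ together with $\lim_{r\to0}D_v=k/2$ and $\lim_{r\to\infty}D_v=0$. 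Differentiating $N_v$ by the Leibniz integral rule reproduces the analogue of \eqref{eq:dichotomy-derivative-of-N} with $\phi$ replaced by $\phi_v$, and in particular the non-negativity of the boundary term needed in \autoref{prop:dichotomy-small-N} is retained.

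With these counterparts in hand, every later statement transfers by rereading its proof with no genuine change. \autoref{cor:dichotomy-kappa-global-grönwall}, \autoref{cor:dichotomy-kappa-asymptotic-grönwall}, \autoref{prop:dichotomy-small-N} and \autoref{lem:dichotomy-frequency-function-bounds} are Grönwall arguments built solely from \eqref{eq:dichotomy-derivative-of-weighted-kappa}, the sign of $N_v$, the bound on $D_v$, and the derivative of $N_v$; substituting the subscript-$v$ objects changes nothing. \autoref{lem:dichotomy-unique-continuation} and the discussion preceding it use only that $\kappa_v$ is continuous, non-negative, $\epsilon$-almost non-decreasing at large radius, and growth-bounded above in terms of $N_v$ on zero-free intervals, all now available. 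The Green's function estimate, the Laplacian and Bishop--Gromov comparisons, and the ALX volume asymptotics are statements about the geometry of $W^n$ alone and require no modification at all.

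I expect the only step that must be done rather than copied is the passage from \eqref{eq:dichotomy-main-assumption} to \eqref{eq:dichotomy-main-assumption-weighted}, and with it the precise form of $N_v$: one must make sure that contracting the Bochner-type equation with $V$, which is parallel only in the radial direction, leaves behind no uncontrolled terms involving $\nabla V$ in the angular directions once the trace over the tangent space is taken. The resolution is to carry out the second-Lie-derivative computation of \autoref{prop:dichotomy-derivative-of-kappa} directly with $\norm{\phi_v}^2$ and observe that the angular contributions reassemble into $\norm{\nabla^A\phi_v}^2$ and $\langle\phi_v,{\nabla^A}^\dagger\nabla^A\phi_v\rangle$ exactly as in the scalar Weitzenböck identity, after which \eqref{eq:dichotomy-main-assumption-weighted} closes the computation and fixes $N_v$ and $D_v$.
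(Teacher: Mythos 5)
The paper does not supply an explicit proof of this proposition; the preceding paragraphs assert equation \eqref{eq:dichotomy-main-assumption-weighted}, the derivative formula \eqref{eq:dichotomy-derivative-of-weighted-kappa}, and the expressions for $N_v$ and $D_v$ in \eqref{eq:dichotomy-weighted-frequency-function}, and the proposition simply records that the downstream bookkeeping then carries over. Your proposal shares that skeleton, and you are correct that \autoref{cor:dichotomy-kappa-global-grönwall}, \autoref{cor:dichotomy-kappa-asymptotic-grönwall}, \autoref{prop:dichotomy-small-N}, \autoref{lem:dichotomy-unique-continuation} and \autoref{lem:dichotomy-frequency-function-bounds} are Gr\"onwall-type arguments that need only the shape of the derivative formula, the non-negativity of $N_v$ when $\Ric\geq 0$, and the upper bound $D_v\leq k/2$, and that the geometric comparison theorems are insensitive to the replacement.

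Where your account has a genuine gap is precisely at the step you yourself flag as the one that "must be done rather than copied," the passage from \eqref{eq:dichotomy-main-assumption} to \eqref{eq:dichotomy-main-assumption-weighted}. The route you propose to close it is circular: redoing the second-Lie-derivative computation with $\norm{\phi_v}^2$ does, by metric compatibility, produce $\norm{\nabla^A\phi_v}^2$ and $\langle\phi_v,{\nabla^A}^\dagger\nabla^A\phi_v\rangle$, but replacing the latter with $-\norm{[\phi\wedge\phi_v]}^2-\langle\Ric(\phi)(v),\phi_v\rangle$ is exactly \eqref{eq:dichotomy-main-assumption-weighted}, the statement you are trying to establish. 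If you instead contract \eqref{eq:dichotomy-main-assumption} with the radially parallel field $V$, the Bochner Laplacian does not pass through the contraction: one has
\begin{align}
	{\nabla^A}^\dagger\nabla^A\phi_v
	= \bigl({\nabla^A}^\dagger\nabla^A\phi\bigr)(V)
	- 2\sum_i\bigl(\nabla^A_{e_i}\phi\bigr)(\nabla_{e_i}V)
	- \phi\bigl(\tr\nabla^2 V\bigr)\ ,
\end{align}
summed over an orthonormal frame $(e_i)$. The correction terms vanish when $V$ is covariantly constant, which is the case on $W^n=\R^n$ (Taubes' original setting), but on a curved ALX space a radially parallel $V$ has $\nabla_{e_i}V\neq 0$ in the angular directions and these terms survive. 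Neither your proposal nor the paper's discussion explains why they disappear, cancel, or can be absorbed into the almost-monotonicity framework by their curvature-controlled decay; the gap is therefore inherited rather than introduced, but your claim that the Lie-derivative reread "closes the computation" should not stand as written, since that reread presupposes the very equation whose proof is in question.
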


There are in fact analogous results for the correlation tensor $T$.
To see this define the (Frobenius) norm of $T$ with respect to the inner product on $\Omega_p^1\otimes\Omega_p^1$ induced by the metric, i.e.
\begin{align}
	\norm{T}^2 = g^{\mu\rho} g^{\nu\sigma} T_{\mu\nu} T_{\rho\sigma}\ .
\end{align}
In this expression the metric is evaluated at the point $p$.
The norm of $T$ satisfies $\frac{1}{c} \kappa^2 \leq \norm{T} \leq \kappa^2$ for some constant $c$.
The tensor $T$ is differentiable with respect to $r$ and there is then a (possibly larger) $c$ such that the following inequality holds.
\begin{align}
	\norm{ \frac{dT}{dr} } \leq c\ \frac{N+D}{r} \norm{T}
\end{align}
Below we will also use the notation $T^\prime := \frac{dT}{dr}$.

From now on view $T$ as linear map from $T_p W \to T_p W$.
If $T$ has a zero eigenvalue at some radius $r$ and $v$ denotes the associated eigenvector, then $\kappa_v^2(r) = 0$.
As a consequence of the unique continuation property in \autoref{lem:dichotomy-unique-continuation}, whenever $r$ is too large $\kappa_v$ must then be identically zero on an interval of the form $[R,\infty)$.
This in turn implies that the component $\phi_v$ vanishes on $W^n \setminus B_R$.
We will deal with such compactly supported components of $\phi$ later.
In the definition of $T$ we restrict ourselves to the subspace of $T_p W$ that is orthogonal to the zero eigenspace of $T$ at infinity.
The correlation tensor then has strictly positive eigenvalues on all of $(0,\infty)$, which will be assumed henceforth.

Denote by $\lambda:(0,\infty) \to \R$ the function that assigns to a radius $r$ the smallest eigenvalue of $T(r)$.
By the preceding paragraph we may assume that this function is non-zero on all of $(0,\infty)$.
Clearly $\lambda \leq \kappa^2$ as the latter is the trace of $T$, so it should be noted that under this assumption $\kappa$ is necessarily everywhere non-zero.

The following proposition states that $\lambda$ is nearly differentiable.
\begin{proposition}\label{prop:dichotomy-smallest-eigenvalue-function}
The finite differences of the smallest-eigenvalue function $\lambda$ at any $r\in (0,\infty)$ satisfy the following bounds:
Denote by $v$ a unit length eigenvector of $T(r)$ with eigenvalue $\lambda(r)$ and let $\Delta > 0$.
\begin{align}
	\lambda(r+\Delta) - \lambda(r) &\leq \langle v , T^\prime(r)\; v \rangle\ \Delta + \mathcal{O}(\Delta^2)\\[0.4em]
	\lambda(r) - \lambda(r-\Delta) &\geq \langle v , T^\prime(r)\; v \rangle\ \Delta + \mathcal{O}(\Delta^2)
\end{align}
Moreover, $\lambda$ is locally Lipschitz continuous on $(0,\infty)$.
\end{proposition}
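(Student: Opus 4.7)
The key tool is the Courant--Fischer (min-max) characterization of the smallest eigenvalue of a symmetric bilinear form. Because $T(r)$ is symmetric in its two slots --- its defining integrand $\langle \phi_v, \phi_w\rangle$ is manifestly symmetric in $v,w$ --- one has
\[
\lambda(r) = \min_{\norm{w}=1}\, \langle w,\, T(r)\, w\rangle,
\]
with the minimum attained precisely on the unit eigenvectors associated to $\lambda(r)$. This characterisation supplies \emph{upper} bounds for $\lambda(r\pm\Delta)$ as soon as a trial vector is fixed, and the two inequalities in the statement are exactly these upper bounds with $v$ as the trial vector.

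For the first inequality I would use the $\Delta$-independent unit eigenvector $v$ of $T(r)$ as a trial vector at radius $r+\Delta$:
\[
\lambda(r+\Delta) \leq \langle v,\, T(r+\Delta)\, v\rangle = \lambda(r) + \langle v,\, T'(r)\, v\rangle\, \Delta + \mathcal{O}(\Delta^2),
\]
where the second equality is a second-order Taylor expansion of the smooth function $r \mapsto T(r)$, with smoothness inherited from that of $\phi$ and of radial parallel transport outside the cut locus. Subtracting $\lambda(r) = \langle v, T(r) v\rangle$ yields the stated upper bound on $\lambda(r+\Delta) - \lambda(r)$. For the second inequality I would apply the same trick in the other direction, testing at radius $r-\Delta$ with the same eigenvector $v$:
\[
\lambda(r-\Delta) \leq \langle v,\, T(r-\Delta)\, v\rangle = \lambda(r) - \langle v,\, T'(r)\, v\rangle\, \Delta + \mathcal{O}(\Delta^2),
\]
and rearranging turns this into the desired lower bound on $\lambda(r) - \lambda(r-\Delta)$.

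Local Lipschitz continuity is then a direct consequence. Specialising the first inequality at $r$ and the second at $r+\Delta$ gives matching upper and lower bounds on $\lambda(r+\Delta) - \lambda(r)$ whose coefficients are bounded in absolute value by $\norm{T'}$ at some interior point. Hence
\[
|\lambda(r+\Delta) - \lambda(r)| \leq \Bigl( \sup_{s\in[r,\, r+\Delta]} \norm{T'(s)} \Bigr) \Delta + \mathcal{O}(\Delta^2),
\]
and the problem reduces to bounding $\norm{T'}$ on compact subintervals of $(0,\infty)$. This follows from the inequality $\norm{T'} \leq c(N+D)\norm{T}/r$ recorded just above the proposition, combined with the standing non-degeneracy assumption $\lambda>0$: the latter forces $\kappa^2 = \tr T \geq \lambda > 0$ on all of $(0,\infty)$, hence $N$ and $D$ are continuous (and $\norm{T} \leq \kappa^2$ is continuous as well), so all three are bounded on every compact subinterval.

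The only genuine subtlety --- and the reason the proposition records one-sided inequalities rather than a derivative identity --- is the potential non-smoothness of $\lambda$ at radii where its value is a degenerate eigenvalue of $T$. There different unit eigenvectors $v$ can give different values of $\langle v, T'(r) v\rangle$, so $\lambda$ need not be differentiable but still admits left and right Dini derivatives controlled by the two inequalities above. No further work is needed: the variational characterisation produces valid upper bounds in both directions once the eigenvector is held fixed, and Lipschitz control suffices for all downstream uses of $\lambda$.
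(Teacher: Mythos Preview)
Your proof is correct and follows essentially the same approach as the paper: both use the variational characterisation of the smallest eigenvalue to get one-sided bounds by testing with the fixed eigenvector $v$, then Taylor expand $T$. For local Lipschitz continuity the paper argues slightly more directly via the zeroth-order Taylor remainder $\abs{\lambda(r)-\lambda(s)} \leq \abs{\langle v_s,(T(r)-T(s))v_s\rangle} \leq \sup\norm{T'}\,\abs{r-s}$, while you combine the two finite-difference inequalities at $r$ and $r+\Delta$; both routes reduce to the same bound on $\norm{T'}$ via $\norm{T'}\leq c(N+D)\norm{T}/r$ and the standing assumption $\lambda>0$.
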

\begin{proof}
For any $r\in(0,\infty)$ denote by $v_r$ an eigenvector of $T(r)$ with eigenvalue $\lambda(r)$.
For $\Delta\geq 0$ we then have
\begin{align}
	\lambda(r+\Delta) - \lambda(r)
	= \langle v_{r+\Delta} , T(r+\Delta) v_{r+\Delta} \rangle - \langle v_r , T(r) v_r \rangle
	\leq \langle v_r , \big( T(r+\Delta)-T(r) \big) v_r \rangle\ .
\end{align}
To get the upper bound on the right hand side note that the smallest eigenvalue at $r+\Delta$ is $\lambda(r+\Delta)$, so the evaluation of $T(r+\Delta)$ on $v_r$ can only ever result in the same or larger values.
The upper bound for the forwards finite difference is then a consequence of Taylor's theorem.
Up to a minus sign the lower bound for the backwards finite difference in the second inequality follows in exactly the same way.

The Lipschitz property of $\lambda$ follows by paying closer attention to the remainder in Taylor's theorem.
Consider a pair $s<r \in (0,\infty)$ and Taylor's theorem at zeroth order
\begin{align}
	T(r) = T(s) + R_0(r)\ .
\end{align}
A standard estimate for the remainder states $\norm{R_0(r)} \leq \sup_{(r,s)} \norm{T^\prime}\ (r-s)$, as long as the derivative is bounded on the given interval.
As mentioned before, the derivative of $T$ at any radius $\tilde r$ is bounded by a multiple of $\frac{N+D}{\tilde r} \norm{T(\tilde r)}$.
Recall from \autoref{prop:dichotomy-derivative-of-kappa} that $D\leq \frac{k}{2}$.
Also, as was discussed in the context of the unique continuation property, $N$ is bounded on any compact interval on which $\kappa$ is non-zero.
It follows that $\lambda$ is Lipschitz on any compact $[s,t]\subset (0,\infty)$, since
\begin{align}
	\abs{\lambda(r) - \lambda(s)}
	\leq \abs{ \langle v_s , \big( T(r)-T(s) \big) v_s \rangle }
	\leq c \abs{r-s}\ .
\end{align}
Equivalently (since $(0,\infty)$ is locally compact): $T$ is locally Lipschitz.
\end{proof}

We can use the preceding proposition to show that, similar to $\kappa$, the function $\lambda$ is asymptotically almost non-decreasing.
The idea here is that $\lambda$ consists of piecewise smooth segments, each coinciding with a function $\kappa_v^2$ for some $v$.
Any $\kappa_v$ is eventually $\epsilon$-almost non-decreasing, so an analogues statement must be true for $\lambda$.
Observe that asymptotic bounds for the mean curvature deviation $D_v$ only depend on the asymptotics of the mean curvature $\Delta r$, which is independent of $v$.
Thus, if $D$ is bounded from below by $-\epsilon$, then the same is true for $D_v$.

Hence, let $R$ be such that $\abs{D}\leq \epsilon$ for any larger radius and consider points $r>s \geq R$.
Define an equidistant partition of the interval $[s,r] = \bigcup_{k=0}^{M-1} [r_k, r_{k+1}]$ where $r_k = s + \frac{k}{M} (r-s)$.
Then, denoting by $v_k$ an eigenvector with eigenvalue $\lambda(r_k)$, the two estimates in \autoref{prop:dichotomy-smallest-eigenvalue-function} imply
\begin{align}
	\lambda(r) = \lambda(s) + \sum_{k=1}^M \langle v_k , T^\prime(r_k)\; v_k \rangle \frac{r-s}{M} + \mathcal{O}\left(\frac{1}{M^2} \right)\ .
\end{align}
This can be rewritten by using $\langle v , T' v \rangle = \frac{d}{dr} \kappa_{v}^2$ and the fact that each $v_k$ is an eigenvector with the smallest eigenvalue at $r_k$, i.e. $\kappa_{v_k}^2(r_k) = \lambda(r_k)$.
\begin{align}
	\lambda(r) = \lambda(s) + 2 \sum_{k=1}^M \lambda(r_k) \frac{\left( N_{v_k}(r_k)\ + D_{v_k}(r_k)\right)}{r_k}\ \frac{r-s}{M} + \mathcal{O}\left( \frac{1}{M^2} \right)
	\label{eq:dichotomy-smallest-eigenvalue-is-almost-nondecreasing}
\end{align}

Notably, since $\abs{D}\leq \epsilon$ we find that $\lambda$ satisfies the following inquality on $[R,\infty)$
\begin{align}
	\frac{\lambda(r)-\lambda(s)}{r-s} &\geq - \frac{2 \epsilon}{M} \sum_{k=1}^M \frac{\lambda(r_k)}{r_k}  + \mathcal{O}\left( \frac{1}{M^2} \right)\ .
\end{align}
This is the finite difference analogue of the differential inequality satisfied by $\kappa$ when it is $\epsilon$-almost non-decreasing.
To make this more apparent, observe that the right hand side of the last inequality contains the arithmetic mean of the function $\lambda/r$ for the given partition of $[s,r]$.
If one takes $M\to\infty$ this expression approaches the mean value of $\lambda/r$ on the given interval.
Seeing that $\lambda$ is continuous, it is then a consequence of the (integral) mean value theorem that there is a point $t \in [s,r]$ at which the right hand side is given by $\lambda(t)/t$, such that
\begin{align}
	\frac{\lambda(r) - \lambda(s)}{r-s} \geq - 2 \epsilon \frac{\lambda(t)}{t}\ .
\end{align}
If $\lambda$ is differentiable at $r$ the limit $s \to r$ exactly recovers the differential inequality, as promised.
We will correspondingly say that $\lambda$ is $\epsilon$-almost non-decreasing whenever it satisfies the finite difference inequality from above.

The fact that $\lambda$ is eventually $\epsilon$-almost non-decreasing allows us to extend \autoref{lem:dichotomy-frequency-function-bounds} in such a way that it also provides bounds for $N_v$ and $\kappa_v$, where $v$ is a unit eigenvector of $T(t)$ associated to the smallest eigenvalue $\lambda(t)$ at some distinguished $t \in [r_0,r_1]$.
\begin{lemma}\label{lem:dichotomy-weighted-frequency-function-bounds}
Fix $\epsilon > 0$ and denote by $R$ the radius beyond which $\abs{D}\leq \epsilon$.
Let $r_0, r_1 \in [R, \infty)$ be a pair of radii such that $r_1$ is larger than any of $4 r_0$, $\left( \kappa^2(r_0)/\lambda(r_0) \right)^{\frac{1}{2a}} r_0$, and $r_0^{1/({1-100\sqrt{a}} )}$.
If ${\kappa(r_1) \leq \left(\frac{r_1}{r_0}\right)^{a-\epsilon} \kappa(r_0)}$ and $a$ is sufficiently small (e.g. $a^{1/4n}<0.1$), then there exists a radius $t \in [r_1^{1-100\sqrt{a}}, r_1]$ of the following significance:

Let $v$ be an eigenvector of $T(t)$ associated to the smallest eigenvalue $\lambda(t)$ and write ${\tilde R = \max\left( a^{\frac{1}{8n}} t , R \right)}$.
On all of $[\tilde R, t]$
\begin{enumerate}
	\item $N \leq a^{1/4}$ and $\kappa \geq a^{\frac{a^{1/4} + \epsilon}{4n}} \kappa(t)$
	\item $N_v \leq a^{1/4}$ and $\kappa_v \geq a^{\frac{a^{1/4} + \epsilon}{4n}} \kappa_v(t)$
\end{enumerate}
\end{lemma}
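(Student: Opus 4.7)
The plan is to run the argument of \autoref{lem:dichotomy-frequency-function-bounds} twice in parallel --- once for $(\kappa,N)$ and once for the smallest-eigenvalue pair $(\sqrt{\lambda}, N_{v_r})$ --- and then to locate a single radius $t$ in the upper sub-interval $[r_1^{1-100\sqrt{a}}, r_1]$ at which both outputs are simultaneously controlled. The three size conditions on $r_1$ are tailored to three distinct needs: $r_1 \geq 4 r_0$ keeps the interval non-degenerate, $r_1 \geq r_0^{1/(1-100\sqrt{a})}$ forces the target sub-interval to lie inside $[r_0, r_1]$ (hence inside $[R,\infty)$), and $r_1 \geq (\kappa^2(r_0)/\lambda(r_0))^{1/(2a)} r_0$ is exactly what is needed to convert slow growth of $\kappa$ into slow growth of $\lambda$.

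The first step is that conversion. Using $\lambda \leq \kappa^2$ together with the rearrangement $\kappa^2(r_0) \leq \lambda(r_0)(r_1/r_0)^{2a}$ of the third size condition, the slow-growth hypothesis yields $\lambda(r_1) \leq \kappa^2(r_1) \leq (r_1/r_0)^{4a-2\epsilon} \lambda(r_0)$. By \autoref{prop:dichotomy-smallest-eigenvalue-function}, $\lambda$ is locally Lipschitz and at every point of differentiability satisfies $\lambda'(r) = \langle v_r, T'(r) v_r \rangle = 2(N_{v_r}(r) + D_{v_r}(r))\lambda(r)/r$, where $v_r$ is any unit eigenvector of $T(r)$ at $\lambda(r)$. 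Integrating $d\log\lambda$ and $d\log\kappa$ and absorbing the $D$-terms with $|D|, |D_{v_r}| \leq \epsilon$ yields
\begin{align}
	\int_{r_0}^{r_1} \frac{N(r)}{r}\, dr \leq a \log(r_1/r_0), \qquad \int_{r_0}^{r_1} \frac{N_{v_r}(r)}{r}\, dr \leq 2a \log(r_1/r_0).
\end{align}

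Restricting both integrals to the logarithmic sub-interval $[r_1^{1-100\sqrt{a}}, r_1]$ of length $100\sqrt{a}\log r_1$ and using $\log(r_1/r_0) \leq \log r_1$, the averages of $N$ and $N_{v_r}$ on the sub-interval (with respect to $dr/r$) are at most $\sqrt{a}/100$ and $\sqrt{a}/50$. A Markov-type argument then exhibits a $t$ in the sub-interval at which $N(t) \leq \sqrt{a}/10$ and $N_{v_t}(t) \leq \sqrt{a}/5$ simultaneously, both below $1$ by the smallness assumption on $a$. Setting $v := v_t$, the first part of \autoref{prop:dichotomy-small-N} --- applied once verbatim and once in its $v$-weighted version (which holds by the proposition following \eqref{eq:dichotomy-weighted-frequency-function}) --- extends $N \leq 1$ and $N_v \leq 1$ to $[a^{1/(8n)} t, t]$ and sharpens the bounds: for $s$ in this interval one gets $N(s) \leq (t/s)^n N(t) \leq a^{-1/8}\cdot\sqrt{a}/10 \leq a^{1/4}$ and analogously for $N_v$. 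The lower bounds on $\kappa$ and $\kappa_v$ then drop out of \autoref{cor:dichotomy-kappa-asymptotic-grönwall}, since $N \leq a^{1/4}$ on $[\tilde R, t]$ gives $\kappa(s) \geq (s/t)^{a^{1/4}+\epsilon}\kappa(t)$ together with $s/t \geq a^{1/(8n)}$.

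The main obstacle is the infinitesimal identification $\lambda'(r) = 2(N_{v_r}+D_{v_r})\lambda(r)/r$ at almost every $r$: one has to upgrade the one-sided finite-difference estimates of \autoref{prop:dichotomy-smallest-eigenvalue-function} to a genuine a.e.\ derivative identity, while allowing $v_r$ to be unique only modulo rotation inside the $\lambda$-eigenspace. A subsidiary but non-trivial point is to justify the Markov step in the presence of the $\epsilon$-almost non-decreasing (finite-difference) behaviour of $\lambda$ discussed after \autoref{prop:dichotomy-smallest-eigenvalue-function}; once these are in hand the remainder of the argument is Grönwall-type bookkeeping already carried out in the proof of \autoref{lem:dichotomy-frequency-function-bounds}.
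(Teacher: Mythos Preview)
Your approach is correct, and in fact more streamlined than the one in the paper. The paper never forms the integral $\int N_{v_r}(r)\,\frac{dr}{r}$; instead it works directly with the finite-difference inequality established after \autoref{prop:dichotomy-smallest-eigenvalue-function}. Concretely, it partitions $[r_0,r_1]$ dyadically into $[2^{\ell}r_0, 2^{\ell+1}r_0]$, applies \eqref{eq:dichotomy-smallest-eigenvalue-is-almost-nondecreasing} on each piece to obtain $\lambda(2^{\ell+1}r_0)/\lambda(2^{\ell}r_0) \geq 1+\tfrac14(\langle N_v\rangle_\ell-\epsilon)$, telescopes the resulting product, and then bounds the fraction of dyadic blocks with $\langle N_v\rangle_\ell>\sqrt a$. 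This yields a bad set $J$ of relative logarithmic length at most $64\sqrt a$, which combined with the analogous set $I$ for $N$ still leaves room in $[r_1^{1-100\sqrt a}, r_1]$. The price is that the chosen point $t_1\notin I\cup J$ only lies in a good \emph{dyadic block}, so a second radius $t_2$ (where $N_{v}$ itself is small) must be extracted and the two resulting intervals reconciled at the end; this is why the final $\tilde R$ carries the exponent $1/(8n)$ rather than $1/(4n)$.

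Your route sidesteps the dyadic bookkeeping by integrating $d\log\lambda$ directly and running a single Markov argument that produces one radius $t$ at which both $N(t)$ and $N_{v_t}(t)$ are small. The ``obstacle'' you flag is in fact not one: since $\lambda$ is locally Lipschitz it is differentiable a.e.\ by Rademacher, and at any point of differentiability the two one-sided inequalities of \autoref{prop:dichotomy-smallest-eigenvalue-function} squeeze to $\lambda'(r)=\langle v_r,T'(r)v_r\rangle=2\bigl(N_{v_r}(r)+D_{v_r}(r)\bigr)\lambda(r)/r$ for \emph{every} unit eigenvector $v_r$ at $\lambda(r)$, so the right-hand side is independent of the choice within the eigenspace. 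The only residual care needed is measurability of $r\mapsto N_{v_r}(r)$ when the eigenspace is degenerate; taking the infimum over eigenvectors, or noting that the sum $N_{v_r}+D_{v_r}$ is well defined a.e.\ and $|D_{v_r}|\leq\epsilon$, handles this. What your approach buys is a single $t$ and a cleaner endgame; what the paper's buys is that it never invokes Rademacher or any a.e.\ differentiation, staying entirely at the level of finite differences.
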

\begin{proof}
Let $\epsilon > 0$, denote by $R$ the radius beyond which $\abs{D}\leq \epsilon$, and assume $\sqrt{a}<1$.
The proof proceeds by establishing the existence of regions in $[r_0,r_1]$ where both $N$ and $N_v$ are less or equal to $\sqrt{a}$ at the same time.
The stated bounds then follow from \autoref{lem:dichotomy-frequency-function-bounds} (with $a$ replaced by $\sqrt{a}$ everywhere).

First, regarding the condition $N\leq\sqrt{a}$, observe that the set ${I = \left\{\ r\in[\log r_0, \log r_1] \mid N(\exp r ) > \sqrt{a}\ \right\}}$ makes up at most $\sqrt{a}$ of the length of the surrounding interval.
To see this, write $I = \coprod (a_k, b_k)$ and then go from $\kappa(r_0)$ to $\kappa(r_1)$ by iteratively using \autoref{cor:dichotomy-kappa-asymptotic-grönwall}, with bounds $N>\sqrt{a}$ on each $(a_k, b_k)$ and $N\geq 0$ on the intervals $[b_k, a_{k+1}]$ in between.
This leads to
\begin{align}
	\kappa(r_1) \geq \left(\frac{r_1}{r_0}\right)^{-\epsilon} \prod_k \left(\frac{b_k}{a_k}\right)^{\sqrt{a}} \kappa(r_0)\ .
\end{align}
This inequality is only compatible with the assumption that ${\kappa(r_1) \leq (r_1/r_0)^{a-\epsilon} \kappa(r_0)}$ if
\begin{align}
	\sum_k (\log b_k - \log a_k) \leq \sqrt{a}\; (\log r_1 - \log r_0)\ .
\end{align}
Equivalently, if $\abs{I} \leq \sqrt{a} \abs{[\log r_0, \log r_1]}$.

An analogous statement for points that satisfy the condition $N_v\leq \sqrt{a}$ makes use of a slightly longer argument.
As before, we set out to investigate the measure of the set on which $N_v > \sqrt{a}$.
Denote by $L$ the largest integer such that $2^L r_0 < r_1$ and consider the sequence $\{ 2^\ell r_0\}_{\ell=0, 1, \ldots, L-1}$.
For each pair of neighbours in this sequence we can apply equation \eqref{eq:dichotomy-smallest-eigenvalue-is-almost-nondecreasing} in the form
\begin{align}
	\lambda(2s) \geq \lambda(s) + \frac{1}{M} \sum_{k=1}^M \lambda(s_k) \left(N_{v_k}(r_k) - \epsilon \right) + \mathcal{O}\left(\frac{1}{M^2}\right)\ ,
	\label{eq:dichotomy-correlation-tensor-eigenvalue-comparison}
\end{align}
where $s_k = (1+k/M)\: s$ and $v_k$ denotes an eigenvector of $T(s_k)$ with eigenvalue $\lambda(s_k)$.
Next, note that \autoref{cor:dichotomy-kappa-asymptotic-grönwall} with the lower bound $N\geq 0$ provides
\begin{align}
	\lambda(s_k)
	= \kappa^2_{v_k}(s_k)
	\geq \left(\frac{s_k}{s}\right)^{-2\epsilon} \kappa^2_{v_k}(s)
	\geq \frac{1}{4} \lambda(s)\ .
\end{align}
Here the last inequality holds by virtue of $s_k/s < 2$ and because $\lambda(s)$ is the smallest eigenvalue of $T$ at $s$, such that $\kappa^2_{v_k}(s) = T(s)(v_k,v_k)$ can't be smaller.
Also introduce for the moment the notation $\langle N_v \rangle_{\ell}$ for the average of $\{ N_{v_k}(s_k) \}_{k=1,\ldots,M}$ in the interval $[2^\ell r_0, 2^{\ell+1} r_0]$.
The inequality in \eqref{eq:dichotomy-correlation-tensor-eigenvalue-comparison} becomes
\begin{align}
	\frac{\lambda(2^{\ell+1}r_0)}{\lambda(2^\ell r_0)} \geq 1 + \frac{1}{4}\bigg( \langle N_v \rangle_{\ell} - \epsilon \bigg) + \mathcal{O}\left(\frac{1}{M^2}\right)\ .
\end{align}
In the last of these pairings we can use $r_1$ as endpoint of the interval instead of $2^L r_0$ without changing the inequality, since $r_1 - 2^{L-1}r_0 > 2^{L-1} r_0$ and $r_1 \leq 2^{L+1} r_0$.
Hence, the product of all these ratios yields
\begin{align}
	\frac{\lambda(r_1)}{\lambda(r_0)} \geq \prod_{\ell = 0}^{L-1} \left( 1 + \frac{\langle N_v \rangle_{\ell} - \epsilon}{4} + \mathcal{O}\left(\frac{1}{M^2}\right) \right)\ .
\end{align}
Let $f$ denote the fraction of $\{{0, 1, \ldots, L-1\} }$ for which $\langle N_v \rangle_\ell + \mathcal{O}(1/M^2) > \sqrt{a}$.
Then the product simplifies to
\begin{align}
	\frac{\lambda(r_1)}{\lambda(r_0)} \geq \left( 1 + \frac{\sqrt{a} - \epsilon}{4} \right)^{fL} \left( 1 - \frac{\epsilon}{4} + \mathcal{O}\left(\frac{1}{M^2}\right) \right)^{1-fL}\ .
\end{align}
On the left use that $\lambda(r_1)\leq \kappa^2(r_1) \leq \left(\frac{r_1}{r_0}\right)^{2(a-\epsilon)} \kappa^2(r_0)$ to replace $\lambda(r_1)$.
Upon taking logarithms on both sides the resulting inequality reads
\begin{align}
	2(a-\epsilon) \log \left(\frac{r_1}{r_0} \right) + \log\left(\frac{\kappa^2(r_0)}{\lambda(r_0)} \right)
	\geq fL \log\left( 1+ \frac{\sqrt{a}-\epsilon}{4}\right) + (1-f)L \log \left( 1 - \frac{\epsilon}{4} + \mathcal{O}\left(\frac{1}{M^2}\right) \right)\ .
\end{align}
This can be simplified in several ways:
\begin{enumerate*}[label={(\arabic*)}]
	\item Because $2^{L+1} r_0 > r_1$ and as long as we make sure that $r_1 \geq 4 r_0$ we can assume $L>\frac{1}{2} \log r_1/r_0$.
	\item Since $\sqrt{a}-\epsilon < 1$ the term $\log\left(1+(\sqrt{a}-\epsilon)/4 \right)$ is no smaller than $(\sqrt{a}-\epsilon)/8$.
	\item By taking $M$ large, we can make sure that $\log\left( 1 - \epsilon/4 + \mathcal{O}(1/M^2) \right)$  is larger than $- \epsilon$.
\end{enumerate*}
Plugging everything in and solving for $f$ leads to the following upper bound
\begin{align}
f < 32 \sqrt{a} \left(1 + \frac{\log\left(\kappa^2(r_0)/\lambda(r_0)\right)}{2a \log \left(r_1/r_0 \right) } \right)\ .
\end{align}
In particular, if $r_1 > \left(\kappa^2(r_0) / \lambda(r_0) \right)^{\frac{1}{2a}} r_0$ we find that $f<64 \sqrt{a}$.

Consider the subset $J\subset [\log r_0, \log r_1]$ that consists of those intervals ${[\log 2^\ell r_0, \log 2^{\ell+1} r_0]}$ on which $\langle N_v \rangle_\ell$ is larger than $\sqrt{a}$.
The inequality for $f$ now tells us that the length of $J$ can be at most $64 \sqrt{a}\; \abs{[\log r_1, \log r_0]}$.
This means that the union of $J$ with the interval $I$ from the first part has bounded measure
\begin{align}
	\abs{I\cup J} < 65 \sqrt{a}\; \abs{[\log r_0, \log r_1]}\ .
\end{align}
We conclude that whenever $\sqrt{a}$ is sufficiently small (for example $\sqrt{a}<0.01$), then there exists a point $\log(t_1)$ that is neither in $I$ nor in $J$.
In fact, if $r_0 < r_1^{1-100\sqrt{a}}$, we can moreover find a $t_1$ that is larger than $r_1^{1-100\sqrt{a}}$, since the measure of $[r_1^{1-100\sqrt{a}}, r_1]$ is still larger than $I$ and $J$ combined.

Choose and fix a $t_1$ with these properties.
On the one hand this means that $N(t_1) \leq \sqrt{a}$ and \autoref{lem:dichotomy-frequency-function-bounds} implies that on the interval $[\tilde R_1, t_1]$ we have $N < a^{1/4}$ and ${\kappa \geq a^{\frac{a^{1/4}+\epsilon}{4n}} \kappa(r)}$.
Here, $\tilde R_1$ is the larger of $a^{1/4n} t_1$ and $R$.
On the other hand it means that $t_1$ is from an interval $[2^\ell r_0, 2^{\ell+1} r_0]$ on which the average value $\langle N \rangle_\ell$ is less than $\sqrt{a}$.
Consequently, the partition of that interval contains a radius $t_2$ at which $N_{v}(t_2)<\sqrt{a}$.
We again deduce that $N_v < a^{1/4}$ and ${\kappa_v \geq a^{\frac{a^{1/4} + \epsilon}{4n}} \kappa_v(r)}$ on $[\tilde R_2, t_2]$, where $\tilde R_2$ is the larger of $a^{1/4n} t_2$ and $R$.

Finally, $[\tilde R_1, t_1]$ and $[\tilde R_2, t_2]$ intersect whenever $a$ is small enough.
To see this explicitly, assume w.l.o.g that $t_1 > t_2$ and note that $t_1/t_2 \leq 2$ since both are contained in an interval of the form $[2^\ell r_0, 2^{\ell +1} r_0]$.
It follows that ${\tilde R_1 / t_2 < 2 a^{1/4n} }$, which is less than one if for example $a^{1/4n}<0.1$.
The upper bound of $0.1$ is convenient, because we then also have $\tilde R_1 < a^{1/8n} t_2$, such that choosing $t=t_2$ and ${\tilde R = \max( a^{1/8n} t, R)}$ concludes the proof.
\end{proof}

%%%%%%%%%%%%%%%%%%%%%%%%%%%%%%%%%%%%%%%%%%%%%%%%%%%%%%%%%%%%%%%%%%%%%%%%%%%%%%%%%%%%%%%%%%%%%%%%%%%%%%%%%%%%%%%%%%%%%%%%%%%%%%%%%%%%%%%%%%%%%%%%%%%%%%%%%%%%%%%%%%

\section{A Priori Bounds}
\label{sec:dichotomy-a-priori-bounds}

The final ingredient for the proof of \autoref{bigthm:dichotomy} are pointwise a priori bounds for $\norm{\phi}$ in terms of $\kappa$, as well as estimates for the volume of the subsets of $B_r$ on which $\norm{\phi}$ is small compared to that upper bound.
These are the promised analogues of the mean value inequality.

The proof relies mainly on the standard approach to the mean-value inequality and the fact that according to \autoref{prop:dichotomy-derivative-of-kappa}, $\kappa$ can't decrease too rapidly at infinity.
However, due to the asymptotic nature of the latter statement and the related lack of control in the interior, these bounds only hold for points that lie in some large spherical geodesic shell surrounding $p$.

Below we use the notation $S(r_1,r_2)$ for the closed spherical geodesic shell based at $p$ with inner radius $r_1$ and outer radius $r_2$, i.e. all points that satisfy $r_1 \leq d(p,x) \leq r_2$.
Moreover, we write $f_{\Omega_i} := \frac{\vol \Omega_i}{\vol \Omega}$ for the fraction that a subset $\Omega_i$ occupies within its corresponding surrounding set~$\Omega$.

The following result holds verbatim if we replace $\phi$, $\kappa$ and $N$ by the versions $\phi_v$, $\kappa_v$ and $N_v$ associated to a unit vector $v \in T_p W$.
\begin{lemma}\label{lem:dichotomy-a-priori-bounds}
Let $W^n$ be an ALX$_k$ space of dimension $n$, $p\in W^n$, $\Ric \geq 0$, and assume $(A,\phi)$ is a solution of~\eqref{eq:dichotomy-main-assumption}.
For any $\epsilon\in (0,1/2)$ there exists a radius $R\geq 0$ and constants $c_i>1$ that only depend on $n$ and $k$, such that for sufficiently large $r \geq R$ the following holds.
\begin{enumerate}
	\item $\norm{\phi(x)} \leq c_0\; \frac{\kappa(r)}{{\sqrt{\vol X}}}$ for any $x \in S\, \bigl( R+\frac{r-R}{8} ,\: r-\frac{r-R}{8} \bigr)$,
	\item Let $t \in [R +\frac{6}{8}(r-R), R+\frac{7}{8}(r-R)]$ and assume that $N\leq 1$ on $[t,r]$.
	Then the subset $\Omega_1 \subseteq \del B_t$ on which ${ \norm{\phi(x)} \leq \frac{1}{2} \frac{\kappa(r)}{\sqrt{\vol X}}}$ has relative volume
	$f_{\Omega_1} \leq c_1 \epsilon + c_2 \sqrt{N} \ .$
\end{enumerate}
\end{lemma}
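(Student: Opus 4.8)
The plan is to treat $u:=\norm{\phi}^2$ as a nonnegative subsolution of a Laplace-type inequality and then run the two standard halves of a mean-value argument: a pointwise upper bound for $u$ from local integral control, and a Chebyshev-type bound on the set where $u$ is not small, coming from the fact that $u$ cannot oscillate much on a geodesic sphere when the frequency $N$ is small. The starting point is that $u$ is subharmonic: the same Bochner/Leibniz computation carried out in the proof of \autoref{prop:dichotomy-derivative-of-kappa} gives, on the smooth locus, $\Delta\norm{\phi}^2 = 2\norm{\nabla^A\phi}^2 + 2\norm{[\phi\wedge\phi]}^2 + 2\langle\Ric\phi,\phi\rangle$ once \eqref{eq:dichotomy-main-assumption} is used to eliminate ${\nabla^A}^\dagger\nabla^A\phi$, so that $\Ric\geq 0$ forces $\Delta u \geq 2\norm{\nabla^A\phi}^2\geq 0$. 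The weighted equation \eqref{eq:dichotomy-main-assumption-weighted} yields $\Delta\norm{\phi_v}^2\geq 2\norm{\nabla^A\phi_v}^2\geq 0$ verbatim, so every step below applies equally to $\phi_v,\kappa_v,N_v$.

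For part (i), I would invoke the local mean-value inequality for nonnegative subharmonic functions. Since $W^n$ is Ricci-flat, Bishop--Gromov (\autoref{thm:dichotomy-bishop-gromov}) supplies volume doubling with constant depending only on $n$, hence Moser's estimate $u(x)\leq c(n)\,(\vol B_\rho(x))^{-1}\!\int_{B_\rho(x)}u$ for a ball $B_\rho(x)\subset B_r(p)$ with $\rho$ comparable to $r-R$, which exists for $x$ in the stated inner shell. By the coarea formula $\int_{B_\rho(x)}u\leq\int_{B_r(p)}\norm{\phi}^2=\int_0^r s^{\,n-k-1}\kappa(s)^2\,ds$, and since $\kappa$ is $\epsilon$-almost non-decreasing beyond some radius (\autoref{prop:dichotomy-derivative-of-kappa}), for $r$ large this is $\lesssim r^{\,n-k}\kappa(r)^2$ up to a fixed additive constant negligible against $r^{\,n-k}\kappa(r)^2$. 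On the other hand the ALX volume asymptotics (\autoref{prop:dichotomy-alx-volume-growth}, read off the model metric \eqref{eq:dichotomy-alx-metric} near points of the shell) give $\vol B_\rho(x)\gtrsim_{n,k}\vol X\cdot r^{\,n-k}$. Dividing yields $\norm{\phi(x)}^2\leq c_0^2\,\kappa(r)^2/\vol X$ with $c_0$ depending only on $n,k$.

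For part (ii), I would combine three inputs on the sphere $\del B_t$. First, a lower bound for the average: since $N\leq 1$ on $[t,r]$ and $t/r$ is bounded away from $0$, \autoref{cor:dichotomy-kappa-asymptotic-grönwall} gives $\kappa(t)\gtrsim\kappa(r)$, and with $\vol\del B_t=(1+o(1))\,t^{\,n-k-1}\vol X$ the average $\overline u(t):=(\vol\del B_t)^{-1}\!\int_{\del B_t}\norm{\phi}^2$ exceeds a fixed positive multiple of $\kappa(r)^2/\vol X$, so its gap over the threshold $\tfrac14\kappa(r)^2/\vol X$ is bounded below by such a multiple. Second, smallness of $\nabla^A\phi$: from the definition of $N$ and $\Ric\geq0$ one has $\int_{B_r}\norm{\nabla^A\phi}^2\leq N\,r^{\,n-k-2}\kappa(r)^2$, and choosing $t$ in the indicated window so that $\del B_t$ carries no more than an average share (Chebyshev in the radial variable — this is where the precise interval for $t$ is used) gives $\int_{\del B_t}\norm{\nabla^A\phi}^2\lesssim N\,r^{\,n-k-3}\kappa(r)^2$. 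Third, a Poincaré inequality on $\del B_t$: the ALX condition \eqref{eq:dichotomy-alx-metric} makes $(\del B_t,g|_{\del B_t})$ asymptotically the model fibration with base rescaled by $t^2$, so it admits an $L^1$-Poincaré inequality with constant $\lesssim t$; together with $|\nabla^{\del B_t}\norm{\phi}^2|\leq 2\norm{\phi}\norm{\nabla^A\phi}$, Cauchy--Schwarz, the second input and $\int_{\del B_t}\norm{\phi}^2=t^{\,n-k-1}\kappa(t)^2$, this yields $\int_{\del B_t}\bigl|\norm{\phi}^2-\overline u(t)\bigr|\lesssim\sqrt N\,t^{\,n-k-1}\kappa(r)^2$. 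Finally, on $\Omega_1$ the quantity $\overline u(t)-\norm{\phi}^2$ is bounded below by a positive multiple of $\kappa(r)^2/\vol X$, so Chebyshev gives $\vol\Omega_1\lesssim\sqrt N\,\vol\del B_t$, i.e. $f_{\Omega_1}\lesssim\sqrt N$; the accumulated errors from the ALX asymptotics and from the almost-monotonicity of $\kappa$ (all $\leq\epsilon$ once $r\geq R$) account for the $c_1\epsilon$.

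The main obstacle is exactly the passage in part (ii) from ball-level to sphere-level: $N$ controls $\nabla^A\phi$ only as a ball integral, so turning that into an oscillation bound for $\norm{\phi}^2$ on a single sphere requires both selecting $t$ within the allowed window so that $\del B_t$ does not absorb more than its average share of $\int_{B_r}\norm{\nabla^A\phi}^2$, and a Poincaré inequality on $\del B_t$ whose constant is genuinely of order $t$ — the latter resting on the $C^1$-control of the sphere metric in \eqref{eq:dichotomy-alx-metric}. Everything else is the routine mean-value machinery, made uniform in the radius by Ricci-flatness and the bounded geometry of the ALX ends.
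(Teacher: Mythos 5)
Your part~(i) is a valid alternative to what the paper does: where you invoke Moser's local mean-value inequality for the subsolution $\norm{\phi}^2$ (with Bishop--Gromov providing the volume-doubling needed for constants depending only on $n$, and ALX volume asymptotics giving the lower volume bound $\vol B_\rho(x)\gtrsim\vol X\, r^{n-k}$), the paper instead multiplies the pointwise identity $\tfrac12\Delta_B\norm{\phi}^2 + \norm{\nabla^A\phi}^2 + \norm{[\phi\wedge\phi]}^2 + \langle\Ric\phi,\phi\rangle = 0$ by a radial bump function $\beta$ times the Dirichlet Green's function $G_x$ on $B_r(p)$, integrates by parts, and then controls everything via the Li--Yau Green's function bound of \autoref{lem:dichotomy-bound-for-greens-function}. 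Both are classical routes to the same mean-value estimate, and the shell restriction, almost-monotonicity of $\kappa$ from \autoref{prop:dichotomy-derivative-of-kappa}, and the $r^{n-k}\vol X$ asymptotics of \autoref{prop:dichotomy-alx-volume-growth} play the same roles in both; the paper's Green's function route has the minor advantage of making the dimensional constant very explicit.

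For part~(ii), however, your argument proves a genuinely weaker statement than the lemma claims. You obtain the sphere-level bound $\int_{\del B_t}\norm{\nabla^A\phi}^2 \lesssim N\, r^{n-k-3}\kappa^2(r)$ only by a Chebyshev selection in the radial variable, i.e.\ by \emph{choosing} a good $t$ in the window $\bigl[R+\tfrac68(r-R),\,R+\tfrac78(r-R)\bigr]$ so that $\del B_t$ absorbs no more than its average share of $\int_{B_r}\norm{\nabla^A\phi}^2$. The lemma as stated asserts the bound on $f_{\Omega_1}$ for \emph{every} $t$ in that window. The paper's proof attains the stronger conclusion precisely because it never needs a sphere-level estimate on $\nabla^A\phi$: it decomposes $\del B_t$ into the threshold sets $\Omega_1$ (where $\norm\phi\leq\tfrac12\kappa(r)/\sqrt{\vol X}$), $\Omega_2$ (where $\norm\phi\leq(1+\sqrt N)\kappa(r)/\sqrt{\vol X}$), and $\Omega_3$, bounds the contribution of each to $\kappa^2(t)=t^{-(n-k-1)}\int_{\del B_t}\norm\phi^2$, and then exploits the almost-monotonicity of $\kappa$ together with the monotonicity of $\tilde r\mapsto \int_{B_{\tilde r}}\nu$ (via \eqref{eq:dichotomy-derivative-of-N}) to pin $\kappa^2(t)$ between $(1-\epsilon-cN(r))\kappa^2(r)$ and roughly $(1+2\epsilon)\kappa^2(r)$, from which $f_{\Omega_1}\leq c_1\epsilon + c_2\sqrt{N(r)}$ drops out by rearranging. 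Your route could only be salvaged into a proof of the stated lemma by showing the sphere integral of $\norm{\nabla^A\phi}^2$ is uniformly controlled over the whole window, which the ball bound from $N$ does not give.

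A secondary issue: you assert an $L^1$-Poincaré inequality on $\del B_t$ with constant $\lesssim t$ and implicit multiplicative constant depending only on $n$ (and $k$). Even granting the $C^2$-convergence of the sphere metric to the rescaled model in \eqref{eq:dichotomy-alx-metric}, the Poincaré constant of $Y$ with base rescaled by $t$ depends on the geometry of the fibration $Y\to B$ (diameter, bottleneck structure), not merely on $n,k$. Since the intrinsic Ricci curvature of $\del B_t$ is not controlled by the ambient $\Ric\geq 0$, there is no obvious Li--Yau-type input to make the constant universal. The paper's threshold-splitting argument sidesteps this entirely; if you retain the Poincaré route you would at minimum need to allow the $c_i$ to depend on the asymptotic link $Y$, which changes the statement of the lemma.
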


\begin{proof}
Throughout the proof we fix some $\epsilon \in (0,1/2)$ and choose $R$ to be some large enough radius, such that on the one hand $\kappa$ is $\epsilon$-almost non-decreasing as provided by \autoref{prop:dichotomy-derivative-of-kappa} and on the other hand $\vol B_r \leq (1+\epsilon) \vol X\: r^{n-k}$ for any $r\geq R$ as in \autoref{prop:dichotomy-alx-volume-growth}.
Assume for now simply that $r > R$ is some fixed outer radius.
We will collect conditions on $r$ as we go and will find that they can always be met by choosing some larger $r$ to start with.

\textit{ad (i)}\\[0.4em]
In this part of the proof we use a specific bump function $\beta$ on $W^4$ with compact support inside the geodesic shell $S(R,r)$, constructed as follows.
Denote by $\beta_{\mathbb{R}}$ a non-increasing function on $\R$ that is equal to $1$ on $\left(-\infty, \frac{14}{16} \right)$ and equal to $0$ on $\left( \frac{15}{16} , \infty \right)$.
Use this to define a corresponding function on $W^4$ by the rule
\begin{align}
	\beta(x) = \beta_{\mathbb{R}} \left(\, \abs{\, \frac{ d(p,x) - \frac{r+R}{2} }{ \frac{r-R}{2}} \, } \, \right)\ .
\end{align}
The function $\beta$ vanishes on the inner ball ${ B_{R+\frac{r-R}{32}}(p) }$, is equal to $1$ on the geodesic shell with inner radius ${R+\frac{r-R}{16}}$ and outer radius $r- \frac{r-R}{16}$, and is zero again outside of ${ B_{r-\frac{r-R}{32}}(p) }$.
Note in particular that $\beta$ and its derivatives have compact support in the interior of $S(R,r)$.
Furthermore, the gradient and Laplacian of $\beta$ are bounded as follows
\begin{align}
	\abs{d\beta} &\leq \frac{32}{(r-R)} & \abs{\Delta \beta} &\leq \frac{32}{(r-R)^2}\ .
\end{align}

Denote by $G_x$ the positive Dirichlet Green's function of the Laplacian on $B_r(p)$ based at $x\in B_r(p)$.
Recall from \autoref{lem:dichotomy-bound-for-greens-function} that, due to the volume growth of $W^n$, for large enough distances the Green's function and its derivative are bounded from above as follows
\begin{align}
	G_x(y) &\leq \frac{(1+\epsilon) c}{\vol X\: d(x,y)^{n-k-2}} & \abs{dG_x(y)} &\leq \frac{(1+\epsilon) c}{ \vol X\: d(x,y)^{n-k-1}}\ ,
\end{align}
where $c$ depends only on the dimension $n$.

With that in mind we now note that contracting equation \eqref{eq:dichotomy-main-assumption} with $\langle \cdot, \phi \rangle$ leads to
\begin{align} \label{eq:dichotomy-main-assumption-contracted}
	\frac{1}{2} \Delta_B \norm{\phi}^2 + \lVert \nabla^A \phi \rVert^2 + \norm{[\phi\wedge\phi]}^2 + \langle \Ric \phi, \phi \rangle = 0\ ,
\end{align}
where $\Delta_B = \nabla^\dagger \nabla$ is the Bochner Laplacian associated to the Levi-Civita connection.
This equation implies $\Delta_B \norm{\phi}^2 \leq 0$, so the function $\norm{\phi}^2$ is {subharmonic\footnotemark} and accordingly satisfies a version of the mean-value inequality.
\footnotetext{Note that the Bochner and connection Laplacian differ by a sign: $\Delta_B = -\tr \nabla^2$.}
To see this directly, multiply \eqref{eq:dichotomy-main-assumption-contracted} by $\beta G_x$ and integrate over $B_r(p)$
\begin{align}
	\int_{B_r(p)}\ \beta G_x \left( \Delta_B \norm{\phi}^2 + \lVert \nabla^A \phi \rVert^2 + \norm{[\phi\wedge\phi]}^2 + \langle \Ric \phi, \phi \rangle \right) = 0\ .
\end{align}
Upon integration by parts in the first term, using that $\beta G_x  = 0$ on $\del B_r(p)$, and assuming that $x$ is an element of the geodesic shell on which $\beta = 1$, we find
\begin{align}
	\norm{\phi(x)}^2
	+ \int_{B_r(p)}\ \left( g^{-1}( d\beta, d G_x ) + (\Delta_B \beta) G_x \right) \norm{\phi}^2
	+ \int_{B_r(p)}\ \beta G_x  \left( \lVert \nabla^A \phi \rVert^2 + \norm{[\phi\wedge\phi]}^2 + \langle \Ric \phi, \phi \rangle \right)
	= 0\ .
\end{align}
Since $\Ric\geq 0$, the last term on the left hand side is non-negative, so the equation provides the following estimate.
\begin{align}
	\norm{\phi(x)}^2 \leq \abs{ \int_{B_r(p)}\ \left( g^{-1}( d G_x, d\beta ) +  G_x \Delta_B \beta \right) \norm{\phi}^2 }
\end{align}
Assume ${x \in S(R+\frac{r-R}{8}, r-\frac{r-R}{8})}$, such that the distance from $x$ to the support of $d\beta$ and $\Delta \beta$ is greater or equal to $\frac{r-R}{16}$.
Furthermore, make $r$ large enough that the previously mentioned bounds on $G_x(y)$ hold for all points with distance $d(x,y) \geq \frac{r-R}{16}$.
Then, using the Cauchy-Schwarz inequality on the first term and the estimates for $G$, $\abs{dG}$, $\abs{d\beta}$ and $\abs{\Delta_B\, \beta}$, we arrive at
\begin{align}
	\norm{\phi(x)}^2
	\leq \frac{32 (1+\epsilon) c}{\vol X\: (r-R)^{n-k}} \int_{S(R,r)} \norm{\phi}^2\ .
\end{align}
As final step use that in the given domain of integration $\kappa$ is $\epsilon$-almost non-decreasing.
Hence, \autoref{cor:dichotomy-kappa-asymptotic-grönwall} with $N\geq 0$ provides the estimate $\kappa(t) \leq \left(\frac{r}{t} \right)^\epsilon \kappa(r)$ for all $t\leq r$.
The integral in the last equation is thus bounded from above by
\begin{align} \label{eq:dichotomy-integration-of-kappa}
	\int_{S(R,r)} \norm{\phi}^2
	= \int_R^r dt\ t^{n-k-1} \kappa^2(t)
	\leq \frac{r^{n-k}}{n-k-2\epsilon}\  \kappa^2(r)\ .
\end{align}
Plugging this in, assuming $r>2R$, and using $\epsilon< 1/2$ leads to
\begin{align}
	\norm{\phi(x)}^2 \leq \frac{2^{n-k+6} c}{(n-k-1)\, \vol X } \kappa^2(r) \leq c_0^2 \frac{\kappa^2(r)}{\vol X}\ .
\end{align}
We note that $c_0$ only depends on the dimension of $W^n$ and the dimension of the fibers at infinity.

\vspace{1em}
\textit{ad (ii)}\\[0.4em]
Fix some radius $t \in [R+\frac{6}{8}(r-R), R+\frac{7}{8}(r-R)]$ and consider the associated geodesic sphere $\del B_t$.
We are interested in the volume of the subset of $\del B_t$ where $\norm{\phi}$ is small compared to the bound in \textit{(i)}.

Hence, write $\Omega_1 \subseteq \del B_t$ for the points where ${\norm{\phi(x)} \leq \frac{\kappa(r)}{2\sqrt{\vol X}} }$.
Also introduce $\Omega_2 \subseteq \del B_t$ to denote the set of points at which $\norm{\phi(x)} \leq (1+\sqrt{N(r)}) \frac{\kappa(r)}{\sqrt{\vol X}}$.
Since $N\geq 0$, $\Omega_2$ contains $\Omega_1$.

Split up the contributions to $\kappa^2(t)$ that arise from integration over $\Omega_1$, $\Omega_2\setminus\Omega_1$, and their complement $\Omega_3 = \del B_t \setminus \Omega_2$.
\begin{align}
	\kappa^2(t) = \frac{1}{t^{n-k-1}} \left( \int_{\Omega_1} \norm{\phi}^2 + \int_{\Omega_2\setminus\Omega_1} \norm{\phi}^2 + \int_{ \Omega_3 } \norm{\phi}^2  \right)
\end{align}
On $\Omega_1$ the integrand is bounded by $\frac{\kappa^2(t)}{4 \vol X}\, $, on $\Omega_2\setminus\Omega_1$ we use $(1+\sqrt{N})^2 \frac{\kappa^2(r)}{\vol X}$, and the integral over $\Omega_3$ can't be larger than $\kappa^2(t)$ in any case.

With regard to the last of these bounds we now make use of the fact that $\kappa$ is almost non-decreasing, which allows us to compare $\kappa^2(t)$ to $\kappa^2(r)$.
To that end consider the derivative of $\kappa^2$.
\begin{align}
	\Restrict{\frac{d\kappa^2}{dr}}{\tilde r}
	= 2 \frac{N+D}{\tilde r} \kappa^2(\tilde r)
	\geq - 2\epsilon \frac{\tilde{r}^{1+2\epsilon}}{r^{2+2\epsilon}} \kappa^2(r)
\end{align}
For the estimate on the right hand side we have used on the one hand that $N\geq 0$ and $D \geq -\epsilon$, and on the other hand that $N\leq 1$ such that \autoref{cor:dichotomy-kappa-asymptotic-grönwall} implies that $\kappa^2(\tilde r) \geq (\tilde r / r)^{2+2\epsilon} \kappa^2(r)$.
Integration from $t$ to $r$ leads to
\begin{align}
	\kappa^2(t) \leq \frac{\epsilon}{1+\epsilon} \left( 2 - \left( \frac{t}{r} \right)^{2+2\epsilon} \right) \kappa^2(r)\ .
\end{align}
Since $t \leq r$, we may as well use the somewhat simpler statement $\kappa^2(t) \leq 2 \epsilon \kappa^2(r)$.

Plugging in the corresponding bounds on each of the $\Omega_i$, writing $\vol \Omega_i = f_{\Omega_i} \vol \del B_t$, $f_{\Omega_2} \leq 1$, and using $\vol \del B_t \leq (1+\epsilon) \vol X t^{n-k-1}$ thus leads to
\begin{align}
	\kappa^2(t)
	&\leq \left( \frac{1}{4} f_{\Omega_1} + (1-f_{\Omega_1})\ (1 + \sqrt{N(r)}\ )^2 + 2 \epsilon \right) (1+\epsilon) \kappa^2(r) \\
	&\leq \left( 1 + 2 \epsilon - \frac{3}{4} f_{\Omega_1} \right)  ( 1 + \sqrt{N(r)}\ )^2 (1+\epsilon) \kappa^2(r)\ ,
\end{align}
which can be rearranged to
\begin{align} \label{eq:dichotomy-size-of-set-where-phi-is-small}
	f_{\Omega_1}
	&\leq \frac{4}{3} \left( 1 + 2 \epsilon - \frac{\kappa^2(t)}{ \bigl( 1 + \sqrt{N(r)} \, \bigr)^2\, (1+\epsilon) \kappa^2(r) } \right)\ .
\end{align}

To make this expression more useful we'll now also need a lower bound for $\kappa^2(t)$ in terms of $\kappa^2(r)$.
This can again be achieved by considering the derivative of $\kappa^2$.
In this case we observe that the following function is non-decreasing in $\tilde{r}$.
\begin{align}
	\tilde r \mapsto \tilde{r}^{n-k-2}\kappa^2(\tilde r) N(\tilde r) = \int_{B_{\tilde{r}}(p)} \lVert \nabla^A \phi \rVert^2 + \norm{[\phi\wedge\phi]}^2 + \langle \Ric(\phi),\phi \rangle
\end{align}
Moreover, since $\kappa$ is $\epsilon$-almost non-decreasing for $\tilde r > R$, we have $\abs{D} \leq N+\epsilon$.
This yields the following upper bound for the derivative of $\kappa^2$:
\begin{align}
	\Restrict{ \frac{d \kappa^2}{dr} }{\tilde{r}}
	= 2 \frac{N + D}{\tilde{r}} \kappa^2(\tilde{r})
	\leq 4\: \frac{r^{n-k-2}}{\tilde{r}^{n-k-1}} N(r) \kappa^2(r) + 2\epsilon\: \frac{r^{2\epsilon}}{\tilde{r}^{1+2\epsilon}}\kappa^2(r)\ .
\end{align}
Integration\footnotemark~from $t$ to $r$ now leads to
\footnotetext{%
For notational simplicity we assume here that $k\neq n-2$.
However, the result holds similarly for the case $k=n-2$, where the only difference is that upon integration the formulas contain logarithms.
}
\begin{align}
	\kappa^2(r) - \kappa^2(t)
	&\leq \frac{4}{n-k-2} \left(\frac{r}{t}\right)^{n-k-2}  N(r) \kappa^2(r) + \left( \left(\frac{r}{t}\right)^{2\epsilon} - 1 \right) \kappa^2(r)\ .
\end{align}
Recall that $r/t < 4/3$ and observe that thus ${(r/t)^{2\epsilon} - 1 < \epsilon}$ for any choice of $\epsilon \in (0, 1/2)$.
It follows that
\begin{align}
	\kappa^2(t) \geq \Bigl( 1 - \epsilon - c N(r) \Bigr)\: \kappa^2(r)\ ,
\end{align}
where the constant $c$ only depends on $n$ and $k$.
Plugging this lower bound for $\kappa^2(t)$ into \eqref{eq:dichotomy-size-of-set-where-phi-is-small} and using that $\epsilon, N<1$, we conclude that
\begin{align}
	f_{\Omega_1}
	&\leq c_2 \epsilon + c_3 \sqrt{ N(r) }\ ,
\end{align}
where $c_2$ and $c_3$ only depend on $n$ and $k$.
\end{proof}

%%%%%%%%%%%%%%%%%%%%%%%%%%%%%%%%%%%%%%%%%%%%%%%%%%%%%%%%%%%%%%%%%%%%%%%%%%%%%%%%%%%%%%%%%%%%%%%%%%%%%%%%%%%%%%%%%%%%%%%%%%%%%%%%%%%%%%%%%%%%%%%%%%%%%%%%%%%%%%%%%%

\section{Proof of Taubes' Dichotomy on ALX spaces}
\label{sec:dichotomy-proof}

We are now in a position to prove \autoref{bigthm:dichotomy}.
As advertised before, the arguments are in complete analogy to Taubes' original proof \cite{Taubes2017a}.

\begin{theorem} \label{thm:dichotomy}
Let $W^n$ be an ALX$_k$ gravitational instanton of dimension $n\geq 2$ with asymptotic fibers of dimension $k \leq n-1$ and fix a point $p \in W^n$.
Assume $(A,\phi)$ satisfies the second-order differential equation~\eqref{eq:dichotomy-main-assumption}.
Then
\begin{enumerate}
	\item either there is an $a>0$ such that $\liminf_{r\to\infty} \frac{\kappa(r)}{r^a} > 0$,
	\item or $[\phi \wedge \phi]=0$.
\end{enumerate}
\end{theorem}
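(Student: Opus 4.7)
The plan is to argue by contradiction: suppose alternative (i) fails, so for every $a > 0$ one has $\liminf_{r\to\infty} \kappa(r)/r^a = 0$, and show that alternative (ii) must hold, i.e.\ $[\phi\wedge\phi]\equiv 0$. A preliminary reduction is to dispose of the degenerate case in which $\kappa$ vanishes on an interval extending to infinity: by the asymptotic unique continuation \autoref{lem:dichotomy-unique-continuation}, this forces $\phi$ to have (eventually) vanishing $L^2$ norm on spheres, and one hopes to argue via \cite{Aronszajn1957} (applied to the second-order equation \eqref{eq:dichotomy-main-assumption}) that then $\phi\equiv 0$, so trivially $[\phi\wedge\phi]=0$. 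Otherwise, we may assume $\kappa>0$ on $(0,\infty)$ and that it is $\epsilon$-almost non-decreasing beyond some $R_\epsilon$.

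Next I would use the failure of (i) to extract, for any prescribed $\epsilon$ and any $a$ as small as we like, a sequence of pairs $r_0<r_1$, both going to infinity, for which the slow-growth hypothesis $\kappa(r_1)\leq (r_1/r_0)^{a-\epsilon}\kappa(r_0)$ of \autoref{lem:dichotomy-weighted-frequency-function-bounds} is satisfied (together with the auxiliary size conditions on $r_1/r_0$). That lemma then produces a radius $t_a\in[r_1^{1-100\sqrt{a}},r_1]$ and a unit vector $v=v_a\in T_pW$ such that on the interval $[\tilde R, t_a]$ both $N$ and $N_v$ are bounded above by $a^{1/4}$, while $\kappa$ and $\kappa_v$ are bounded below by almost their values at $t_a$. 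On this interval, the a priori bounds of \autoref{lem:dichotomy-a-priori-bounds} apply to both $\phi$ and $\phi_v$: on a suitable spherical shell $S$ around $t_a$ one has a pointwise upper bound $\norm{\phi(x)}\leq c_0\,\kappa(t_a)/\sqrt{\vol X}$, and on a subset of $\del B_t$ of relative volume close to $1$ (namely of deficit $\leq c_1\epsilon+c_2 a^{1/8}$) a lower bound $\norm{\phi(x)}\geq \tfrac12\kappa(t_a)/\sqrt{\vol X}$, and analogously for $\phi_v$.

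The crucial input of Ricci-flatness enters now through the definition of $N$, which on a gravitational instanton reduces to
\begin{align}
N(t_a)\,t_a^{n-k-2}\,\kappa^2(t_a)=\int_{B_{t_a}}\bigl(\norm{\nabla^A\phi}^2+\norm{[\phi\wedge\phi]}^2\bigr)\,,
\end{align}
so that $\int_{B_{t_a}}\norm{[\phi\wedge\phi]}^2\leq a^{1/4}\, t_a^{n-k-2}\kappa^2(t_a)$. Dividing by the volume of the shell $S$, which is of order $t_a^{n-k}\vol X$, shows that the average of $\norm{[\phi\wedge\phi]}^2$ on $S$ is at most $a^{1/4}\kappa^2(t_a)/(\vol X\cdot t_a^2)$, i.e.\ exponentially smaller than the natural pointwise scale $\kappa^4(t_a)/(\vol X)^2$. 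Combined with the pointwise upper bound on $\norm{\phi}$ and standard elliptic regularity (the quantity $\norm{[\phi\wedge\phi]}^2$ is controlled in $C^0$ by its $L^2$-norm on a slightly larger ball via the equation \eqref{eq:dichotomy-main-assumption}), this forces $[\phi_i,\phi_j]$ to be pointwise small on a set of almost full measure in $S$, and in particular to vanish at a sequence of points $x_k\in W^n$ with $d(p,x_k)\to\infty$ as $a\to 0$.

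The hardest step, which I expect to require the most care, is converting this $L^2$ smallness of $[\phi\wedge\phi]$, valid on ever larger balls but with a relative scale that depends delicately on $\kappa(t_a)$ and the shell geometry, into the genuine conclusion $[\phi\wedge\phi]\equiv 0$ on all of $W^n$. The strategy I would adopt is to pass to the limit $a\to 0$ along the sequence $t_a$ and use unique continuation: on solutions of \eqref{eq:dichotomy-main-assumption} (which is elliptic in suitable gauges), if $[\phi\wedge\phi]$ vanishes on a set with a limit point, or more strongly satisfies a differential inequality to which Aronszajn's theorem applies, then it must vanish globally. An alternative and cleaner route is to promote the bounds above to a direct contradiction: if $[\phi\wedge\phi]\not\equiv 0$, then on some fixed ball $B_\rho(x_0)$ one has $\int_{B_\rho(x_0)}\norm{[\phi\wedge\phi]}^2\geq c>0$, and since for all large $t_a$ this ball is contained in $B_{t_a}$, one obtains $a^{1/4}\geq c\,t_a^{-(n-k-2)}\kappa^{-2}(t_a)$; combining with the lower growth information from the $\epsilon$-almost non-decreasing property of $\kappa$ (which forces $\kappa^2(t_a)\geq (R/t_a)^{2\epsilon}\kappa^2(R)$) and the freedom to take $a$ as small as we please contradicts the existence of such $t_a$ going to infinity. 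This is the step where the dimensional count $n-k\geq 2$ and the ALX volume growth enter most essentially, and where a few lines of bookkeeping will be needed to match Taubes' argument in \cite{Taubes2017a} exactly.
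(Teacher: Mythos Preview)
Your ``alternative cleaner route'' does not yield a contradiction. From $N(t_a)\leq a^{1/4}$ you correctly deduce
\[
c \;\leq\; a^{1/4}\, t_a^{\,n-k-2}\,\kappa^2(t_a),
\]
but this is harmless: the constraints in \autoref{lem:dichotomy-weighted-frequency-function-bounds} force $r_1$ (and hence $t_a\geq r_1^{1-100\sqrt a}$) to grow at least like $\bigl(\kappa^2(R)/\lambda(R)\bigr)^{1/2a}$ as $a\to 0$, so $t_a^{\,n-k-2}\kappa^2(t_a)\to\infty$ much faster than $a^{-1/4}$. The right-hand side therefore blows up and no contradiction arises. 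The underlying issue is that smallness of the \emph{frequency} $N$ says nothing about the absolute size of $\int_{B_{t_a}}\norm{[\phi\wedge\phi]}^2$, only about its ratio to $t_a^{n-k-2}\kappa^2(t_a)$; a compactly supported, nowhere-zero $[\phi\wedge\phi]$ is perfectly compatible with $N(t)\to 0$. Your first route has the same defect in disguise: the claimed $C^0$ control of $\norm{[\phi\wedge\phi]}^2$ from its $L^2$-norm is not available (this quantity satisfies no useful elliptic inequality), and even if it were, pointwise smallness at points escaping to infinity gives no information in the interior --- there is no unique-continuation principle for $[\phi\wedge\phi]$.

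The paper's argument is structurally different and produces a \emph{scale-invariant} contradiction. It works with \emph{two} eigenvectors of $T(t)$: $u$ for the largest and $v$ for the smallest eigenvalue. On a well-chosen sphere $\partial B_r$ one first bounds $\int_{\partial B_r}\abs{\Tr\phi_u\phi_v}^2$ via a Sobolev inequality on $\partial B_r$ (this is where the boundary-integral smallness of $\nu_u,\nu_v$ is used), then invokes the $\mathfrak{su}(2)$ identity
\[
\abs{[\phi_u,\phi_v]}^2 \;=\; 4\,\abs{\phi_u}^2\abs{\phi_v}^2 - 4\bigl(\Tr\phi_u\phi_v\bigr)^2
\]
to bound $\int_{\partial B_r}\abs{\phi_u}^2\abs{\phi_v}^2$. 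Finally, item~\textit{(ii)} of \autoref{lem:dichotomy-a-priori-bounds} (applied to $\phi_u$, not $\phi_v$) says $\abs{\phi_u}$ is bounded \emph{below} on almost all of $\partial B_r$, which converts the previous bound into $\kappa_v^2(r)\leq C a^{1/16}\kappa_v^2(t)$. This clashes with $\kappa_v^2(r)\geq\tfrac14\kappa_v^2(t)$ once $a$ is small --- a contradiction independent of how large $t$ or $\kappa_v(t)$ happen to be. Your outline never introduces $u$, never uses the $\mathfrak{su}(2)$ identity (which is where $G=SU(2)$ enters), and never reaches a dimensionless inequality; these are the missing ideas.
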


\begin{proof}
It is sufficient to consider the case where $\kappa$ is not asymptotically zero, since otherwise $\kappa$ -- and thus $\phi$ -- has compact support due to the (asymptotically) unique continuation property of \autoref{lem:dichotomy-unique-continuation} and in that case $[\phi\wedge\phi]$ must vanish.
To see this, assume to the contrary that $[\phi\wedge\phi]$ is non-zero on some neighbourhood of a point $p$.
Since $\phi$ is compactly supported, there is some radius $R$ such that $\phi$ vanishes on $W^n \setminus B_{R}(p)$ and $\kappa$ vanishes on $[R,\infty)$.
As a consequence, there is some positive constant $c$ such that for any $r>R$ we have
\begin{align}
	\int_{B_r(p)} \nu \geq \int_{B_r(p)} \norm{[\phi\wedge\phi]}^2 = c > 0\ .
\end{align}
Here, $\nu = \lVert \nabla^A \phi \rVert^2 + \norm{[\phi\wedge\phi]}^2 + \langle \Ric(\phi),\phi \rangle$ denotes the integrand that defines $N$.
The derivative of $\kappa^2$ at any $r > R$ is given by
\begin{align}
	\frac{d \kappa^2}{dr} = \frac{1}{r^{n-k-1}} \left( \int_{B_r(p)} \nu + \int_{\del B_r(p)} \left( \Delta r - \frac{n-k-1}{r} \right) \norm{\phi}^2 \right)\ .
\end{align}
The second integral vanishes, since $\phi=0$ at points with $d(x,p) > R$.
However, the first term is bounded from below by $\frac{c}{r^{n-k-1}} > 0$, which is in contradiction to the assumption that $\kappa$ is constant (namely $0$) on all of $[R, \infty)$.

By the same reasoning one also finds that $[\phi, \phi_v] =0$ whenever $\kappa_v$ has compact support.
As a consequence it is sufficient to consider the components of $\phi$ that are in the complement of the zero eigenspace of $T$ at infinity (see the related discussion in \autoref{sec:dichotomy-correlation-tensor}).
Note in particular that when only a single component $\phi_v$ is non-zero at infinity, then $[\phi\wedge\phi]=0$ everywhere.
Hence, assume from now on that $T$ acts on a vector space of dimension at least 2 and that $\lambda>0$ on all of $(0,\infty)$.

To prove the dichotomy stated in the theorem, assume that $\kappa$ is not asymptotically bounded below by any positive power of $r$.
This means that for any small $a>0$ (say, for example, small enough that ${a^{1/8n}< 0.1}$) we can do the following:
Set $\epsilon = a/2$ and let $R>1$ be a radius beyond which $\abs{D}\leq \epsilon$ and $\vol B_r(p) \leq (1+\epsilon) \vol X r^{n-k}$.
Then we can find an arbitrarily large $r_1 \in [R,\infty)$ such that ${\kappa(r_1)\leq \left(\frac{r_1}{R}\right)^{a-\epsilon} \kappa(R)}$.
In particular we may choose some $r_1$ that is larger than each of the four numbers $4 R$, $\left( \kappa^2(R)/\lambda(R) \right)^{1/2a} R$, $\kappa(R)^{-1/a}$, and $R^{1/({1-100\sqrt{a}} )}$.
We are then in the situation in which we can rely on \autoref{lem:dichotomy-weighted-frequency-function-bounds}.
The arguments in the upcoming five parts show that this leads to a contradiction if $N\neq 0$ and $a$ is too small.

\paragraph{Part 1}
Recall that \autoref{lem:dichotomy-weighted-frequency-function-bounds} provides the existence of a distinguished radius $t \in [r_1^{1-100 \sqrt{a}}, r_1] \subseteq [R,r_1]$.
In this part we collect our previous results and slightly expand on our knowledge about the eigenvalues of $T$ at and below $t$.

Write $u$ and $v\/$ for unit eigenvectors associated to the \emph{largest} and \emph{smallest} eigenvalue of $T(t)$, respectively.
Recall that these eigenvalues coincide at $t$ with the values of $\kappa_u^2(t)$ and $\kappa_v^2(t)$, so they satisfy ${\kappa_v^2(t)\leq \kappa_u^2(t)}$.
If $\kappa_v^2(t) \neq \kappa_u^2(t)$ the eigenvectors $v$ and $u$ are guaranteed to be orthogonal.
Otherwise $T(t)$ is a multiple of the identity matrix and we choose an arbitrary pair of orthonormal vectors.

Denote by $\tilde R$ the larger of $a^{\frac{1}{8n}} t$ and $R$.
\autoref{lem:dichotomy-weighted-frequency-function-bounds} establishes that on the interval $I:=[\tilde R, t]$ the frequency functions $N$ and $N_v$ are bounded from above by $a^{1/4}$ and provides associated lower bounds for $\kappa$ and $\kappa_v$.
As we show now, analogous estimates hold for $N_u$ and $\kappa_u$.

First observe that the largest eigenvalue satisfies $\kappa^2_u(t) \geq \frac{1}{n}\kappa^2(t)$ since $\kappa^2$ is the trace of $T$.
From this and the definition of $N$ and its $N_u$ version (cf. \autoref{prop:dichotomy-derivative-of-kappa} and \eqref{eq:dichotomy-weighted-frequency-function}, respectively) it follows that ${N_u(t) \leq n N(t)}$.
As a consequence $N_u \leq n a^{1/4}$ and, as long as we make sure that $a^{1/4} < 1/n$, a small variation of the second part of \autoref{prop:dichotomy-small-N} finds that $N_u \leq a^{1/8}$ on all of $I$.
Since $N_u$ is bounded from above, we can now deduce bounds on $\kappa_u$ as usual via \autoref{cor:dichotomy-kappa-asymptotic-grönwall}.
\begin{align}
	\kappa_u \geq a^{\frac{a^{1/8}+\epsilon}{8n}} \kappa_u(t)\ .
\end{align}
In the current situation with $\epsilon = a/2$ this bound and its analogue for $\kappa_v$ can be simplified considerably.
For that observe that $a^{\epsilon} = a^{a/2}$ is certainly larger than $1/2$, and similarly $a^{\frac{a^{1/4}}{4n}} > (1/2)^{\frac{4}{4n}} > 1/2$ and $a^{\frac{a^{1/8}}{8n}} > 1/2$.
Applying these observations to the bounds for $\kappa_u$ and $\kappa_v$ results in the main conclusions of this part.

The following estimates hold on all of $I=[\tilde R, t]$:
\begin{itemize}
	\item $N_v \leq a^{1/4}$ and $\kappa_v \geq \frac{1}{4} \kappa_v(t)$
	\item $N_u \leq a^{1/8}$ and $\kappa_u \geq \frac{1}{4} \kappa_u(t)$
\end{itemize}

\paragraph{Part 2}
The goal of this part is to show that there exist points in  ${J= \bigl[ \tilde R + \frac{6}{8}(t-\tilde R), \tilde R + \frac{7}{8}(t-\tilde R) \bigr]}$ for which the integrals that appear in $N_u$ and $N_v$ are both small.
This interval is of significance, because it corresponds to radii that are contained in the geodesic shell that appears in item \textit{(iii)} of \autoref{lem:dichotomy-a-priori-bounds}.

Denote by $\nu_u  := \left( \lVert \nabla^A \phi_u \rVert^2 + \norm{[\phi\wedge\phi_u]}^2 \right)$ the integrand in $N_u$ and analogously for $\nu_{v\/}$.
Consider the following sets of radii in $J$.
\begin{align}
	J_u &:= \set{ s \in J \suchthat \int_{\del B_s} \nu_u  \leq \frac{1}{2 \abs{J}}\; t^{n-k-2}\; \kappa_u^2(t)\; a^{1/16} }
	\label{eq:dichotomy-main-proof-N-boundary-integral1} \\
	J_v &:= \set{ s \in J \suchthat \int_{\del B_s} \nu_v  \leq \frac{1}{2 \abs{J}}\; t^{n-k-2}\; \kappa_v^2(t)\; a^{1/16} }
	\label{eq:dichotomy-main-proof-N-boundary-integral2}
\end{align}
Note for later that $\abs{J} = \abs{I}/8$, where $I = [\tilde R, t]$, so the fact that $a^{1/8n} t \leq \tilde R$ and using $a^{1/8n} < 0.5$ yields $\abs{J} \geq \frac{t}{16}$.

The measure of $J_u$ is bounded from below, since
\begin{align}
	t^{n-k-2}\kappa^2_u(t) N_u(t)
	\geq \int_J ds \int_{\del B_s} \nu_u
	\geq \left( \abs{J} - \abs{J_u} \right)\; \frac{1}{2 {\abs{J}}}\; t^{n-k-2}\; \kappa_u^2(t)\; a^{1/16}\ ,
\end{align}
and similarly for $J_v$.
Since both $N_u(t), N_v(t) \leq a^{1/8}$, we find
\begin{align}
	\abs{J_u}, \abs{J_v} > \left( 1 - 2 a^{1/16} \right) \abs{J}\ .
\end{align}
Since $2 a^{1/16} < 0.5$ (recall that $a^{1/8n}<0.1$ and $n\geq 2$ in any case), it follows that $J_u$ and $J_v$ must intersect.
Hence, choose and fix from now on a point $r \in J$ at which both \eqref{eq:dichotomy-main-proof-N-boundary-integral1} and \eqref{eq:dichotomy-main-proof-N-boundary-integral2} are satisfied.

\paragraph{Part 3}
As a next step we establish an $L^2$-bound for the function $\Tr \phi_u \phi_v$ on $\del B_r$, where $r$ is the fixed radius we found at the end of Part 2.
For this we view $\del B_r$ with the induced metric as a compact Riemannian manifold, where we can rely on a Hölder-Sobolev inequality.

First note that the derivative of $\Tr \phi_u\phi_v$ is bounded by
\begin{align}
	\norm{d \Tr \phi_u \phi_v } \leq \lVert \nabla^A \phi_u \rVert\; \lVert \phi_v \rVert + \lVert \phi_u \rVert\; \lVert \nabla^A \phi_v \rVert\ .
\end{align}
Because $r$ lies inside the geodesic shell of \autoref{lem:dichotomy-a-priori-bounds} we know that $\norm{\phi_u} \leq c_0 \kappa_u(t)$ and $\norm{\phi_v}\leq c_0 \kappa_v(t)$.
In light of the fact that both \eqref{eq:dichotomy-main-proof-N-boundary-integral1} and \eqref{eq:dichotomy-main-proof-N-boundary-integral2} are satisfied at $r$ (and using the Cauchy-Schwarz inequality), we find that
\begin{align}
	\int_{\del B_r} \norm{d \Tr \phi_u \phi_v}^2 \leq 16 c_0^2 t^{n-k-3} \kappa_u^2(t) \kappa_v^2(t) a^{1/16}\ .
\end{align}

On the $n-1$ dimensional compact manifold $\del B_r$ the Gagliardo-Nirenberg-Sobolev inequality holds, such that
\begin{align}
	\norm{ \Tr \phi_u\phi_v }_{L^q(\del B_r)} \leq C\, \norm{d \Tr \phi_u \phi_v}_{L^2(\del B_r)}\ ,
\end{align}
where $\frac{1}{q} = \frac{1}{2}- \frac{1}{n-1}$ and the constant $C$ only depends on $n$.
On the other hand, since $\del B_r$ has finite measure we can also use Hölder's inequality with $\frac{1}{p} = \frac{1}{q} + \frac{1}{n-1} (= \frac{1}{2})$, such that we obtain
\begin{align}
	\norm{ \Tr \phi_u\phi_v }_{L^2(\del B_r)}
		\leq \operatorname{meas}(\del B_r)^{\frac{1}{n-1}} \norm{ \Tr \phi_u\phi_v }_{L^q(\del B_r)}
		\leq C_{HS}\, r \norm{d \Tr \phi_u \phi_v}_{L^2(\del B_r)}\ ,
\end{align}
where we have used Bishop-Gromov's volume comparison \autoref{thm:dichotomy-bishop-gromov} to estimate the area of the geodesic sphere and absorbed any radius-independent contributions into the Hölder-Sobolev constant $C_{HS}$, which then still depends only on $n$.

More explicitly, we conclude that we have the following $L^2$-bound for $\Tr\phi_u\phi_v$ over $\del B_r$:
\begin{align}
	\int_{\del B_r} \abs{\Tr \phi_u \phi_v}^2
	&\leq C_{HS}\, c_0^2 t^{n-k-1} \kappa_u^2(t) \kappa_v^2(t) a^{1/16}\ .
	\label{eq:dichotomy-main-proof-bound-on-integral-of-second-casimir}
\end{align}

\paragraph{Part 4}
Our next goal is to similarly determine an $L^2$-bound for the function $\abs{\phi_u} \cdot \abs{\phi_v}$ on $\del B_r$, where $\abs{\phi_u}$ denotes pointwise application of the norm induced by the Killing form.
It is a property of $\mathfrak{su}(2)$ that
\begin{align}
	\abs{[\phi_u,\phi_v]}^2 = 4 \abs{\phi_u}^2 \abs{\phi_v}^2 - 4 \Tr\left( \phi_u \phi_v \right)^2\ .
\end{align}
Moreover, since $u$ and $v$ are orthonormal $\norm{[\phi\wedge\phi_v]}^2 \geq \abs{[\phi_u,\phi_v]}^2 \mu_{W^n}$, so we can bound the following integral with the help of \eqref{eq:dichotomy-main-proof-N-boundary-integral2}.
\begin{align}
	\int_{\del B_r} \abs{\phi_u}^2 \abs{\phi_v}^2 - \int_{\del B_r} \abs{ \Tr(\phi_u \phi_v) }^2
	= \frac{1}{4} \int_{\del B_r} \abs{[\phi_u, \phi_v]}^2
	\leq \frac{1}{4} \int_{\del B_r} \nu_v
	\leq 2\: t^{n-k-3}\: \kappa_v^2(t)\: a^{1/16}
\end{align}
Together with the main result of Part 3 in equation \eqref{eq:dichotomy-main-proof-bound-on-integral-of-second-casimir} this leads to
\begin{align}
	\int_{\del B_r} \abs{\phi_u}^2 \abs{\phi_v}^2
	\leq \left( C_{HS} + \frac{2}{c_0^2 t^2 \kappa_u^2(t)} \right) c_0^2 t^{n-k-1}\; \kappa_u^2(t)\; \kappa_v^2(t)\; a^{1/16}\ .
\end{align}
Observe that things have been set up in such a way that $t^2 \kappa_u^2(t)>1$:
First, $\kappa_u$ is almost non-decreasing, so $\kappa_u^2(t) \geq (t/R)^{2\epsilon} \kappa_u^2(R)$.
Second, we know that $\kappa^2_u(R) \geq \lambda(R)$ since the latter is the smallest eigenvalue of $T(R)$.
Third, we have previously chosen $r_1$ large enough that $\lambda(R) > (R/r_1)^{2a} \kappa^2(R)$.
Fourth, $t$ is larger than $r_1^{1-\sqrt{100}a}$ while $R>1$.
Plugging everything together yields
\begin{align}
	t^2 \kappa_u^2(t) \geq  r_1^{2(1-100\sqrt{a}-4a)} \kappa^2(r_1) > r_1^{2a} \kappa^2(r_1)\ ,
\end{align}
where the last step follows via $a^{1/4n} < 0.1$ and $n\geq 2$.
Finally, recall that we have also explicitly chosen $r_1$ to be large enough that the rightmost expression is larger than $1$.

The upshot of this part is that there exists a constant $C>1$ that only depends on $n$ and $k$, such that
\begin{align}
	\int_{\del B_r} \abs{\phi_u}^2 \abs{\phi_v}^2
	\leq C t^{n-k-1}\; \kappa_u^2(t)\; \kappa_v^2(t)\; a^{1/16}
	\label{eq:dichotomy-main-proof-bound-on-integral-of-first-casimir-product}\ .
\end{align}
From now on we allow the value of $C$ to increase from one equation to the next.

\paragraph{Part 5}
In this final part, we combine the results of the previous four parts with item \textit{(ii)} of \autoref{lem:dichotomy-a-priori-bounds}.
Thus, write $\Omega_1$ for the subset of $\del B_r$ where ${\norm{\phi_u} \leq \frac{\kappa_u(t)}{2 \vol X}}$.

The inequality in \eqref{eq:dichotomy-main-proof-bound-on-integral-of-first-casimir-product} remains true if we restrict the domain of integration to $\del B_r \setminus {\Omega_1}$, such that
\begin{align}
	\frac{\kappa_u^2(t)}{4\vol X} \int_{\del B_r \setminus {\Omega_1}} \abs{\phi_v}^2
	\leq \int_{\del B_r \setminus {\Omega_1}} \abs{\phi_u}^2 \abs{\phi_v}^2
	\leq C^2 t^{n-k-1}\; \kappa_u^2(t)\; \kappa_v^2(t)\; a^{1/16}\ .
\end{align}
More concisely, this leads to the inequality
\begin{align}
	\int_{\del B_r \setminus {\Omega_1}} \abs{\phi_v}^2 \leq C \vol X t^{n-k-1} \kappa_v^2(t) a^{1/16}\ .
\end{align}
Meanwhile, item $(i)$ of \autoref{lem:dichotomy-a-priori-bounds} provides the upper bound $\abs{\phi_v}\leq c_0^2 \kappa_v^2(t)$.
Writing $\vol \Omega_1 = f_{\Omega_1} \vol B_r$ this leads to
\begin{align}
	\int_{{\Omega_1}} \abs{\phi_v}^2 \leq f_{\Omega_1} (1+\epsilon) \vol X\: r^{n-k-1} c_0^2 \kappa_v^2(t)\ .
\end{align}
Item $(ii)$ of \autoref{lem:dichotomy-a-priori-bounds} states that $f_{\Omega_1} \leq c_1 \epsilon + c_2 \sqrt{N_u(t)}$.
Recall from Part 1 that $N_u(t) \leq a^{1/8}$ while $\epsilon = a/2$, such that $f_{\Omega_1}$ is bounded by some multiple of $a^{1/16}$.

It follows that the sum of the two integrals satisfies
\begin{align}
{\int_{\del B_r} \abs{\phi_v}^2 \leq C t^{n-k-1} \kappa_v^2(t) a^{1/16}}\ .
\end{align}
This is equivalent to the statement that ${\kappa_v^2(r) \leq C a^{1/16} \kappa_v^2(t)}$.
Finally, combining this with the bound $\kappa_v^2(r) \geq \frac{1}{4} \kappa_v^2(t)$ from Part 1 culminates in the inequality
\begin{align}
	\kappa_v^2(t) \leq 4C a^{1/16} \kappa_v^2(t)\ ,
\end{align}
which is absurd, as we are free to choose $a$ arbitrarily small and in particular such that $a^{1/16} < \frac{1}{4C}$.

\end{proof}

%%%%%%%%%%%%%%%%%%%%%%%%%%%%%%%%%%%%%%%%%%%%%%%%%%%%%%%%%%%%%%%%%%%%%%%%%%%%%%%%%%%%%%%%%%%%%%%%%%%%%%%%%%%%%%%%%%%%%%%%%%%%%%%%%%%%%%%%%%%%%%%%%%%%%%%%%%%%%%%%%%

\section{Proof of Taubes' Dichotomy for Kapustin-Witten Solutions}
\label{sec:dichotomy-proof2}

In this section we prove \autoref{bigthm:dichotomy2}, which enhances \autoref{thm:dichotomy} for solutions of the Kapustin-Witten equations on four-manifolds.
We again closely follow Taubes' arguments, who proved an analogous statement in case the four-manifold is Euclidean space $\R^4$.
\begin{theorem}
\label{thm:dichotomy2}
Let $W^4$ be an ALX gravitational instanton of dimension $4$, with asymptotic fibers of dimension $k \leq 3$, and such that sectional curvature is bounded from below.
Assume $(A,\phi)$ are solutions of the $\theta$-Kapustin-Witten equations and if $\theta\not\equiv 0,\pi$ also assume that $\int_{W^4} \norm{F_A}^2 < \infty$, then
\begin{enumerate}
	\item either there is an $a>0$ such that $\liminf_{r\to\infty} \frac{\kappa(r)}{r^a} > 0$,
	\item or $[\phi \wedge \phi]=0$, $\nabla^A \phi = 0$, and $A$ is self-dual if $\theta = 0$, flat if $\theta \in (0,\pi)$, and anti-self-dual if $\theta = \pi$.
\end{enumerate}
\end{theorem}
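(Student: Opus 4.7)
The plan is to feed Theorem~A into the structure of the $\theta$-Kapustin-Witten equations. If alternative~(i) of the statement fails, i.e.\ $\liminf_{r\to\infty}\kappa(r)/r^a=0$ for every $a>0$, then Theorem~A already gives $[\phi\wedge\phi]=0$ on all of $W^4$. Substituting this back into the $\theta$-KW equations collapses them to the linear relations $\cos(\theta/2)\,F_A^+=\sin(\theta/2)\,(d_A\phi)^+$ and $\sin(\theta/2)\,F_A^-=-\cos(\theta/2)\,(d_A\phi)^-$ together with $d_A^{\hodge}\phi=0$. The boundary cases $\theta=0$ and $\theta=\pi$ then immediately yield the claimed (anti-)self-duality of $A$ and the vanishing of one chirality of $d_A\phi$. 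For $\theta\in(0,\pi)$ both trigonometric coefficients are nonzero, so the relations are pointwise invertible and give $\norm{d_A\phi}^2\le C_\theta\norm{F_A}^2$, whence the hypothesis $\int_{W^4}\norm{F_A}^2<\infty$ upgrades to $d_A\phi\in L^2(W^4)$.

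Next I would observe that with $\Ric=0$ and $[\phi\wedge\phi]=0$ the second-order equation~\eqref{eq:dichotomy-main-assumption} reduces to the Bochner-harmonic equation ${\nabla^A}^\dagger\nabla^A\phi=0$. The natural tool is the standard cutoff calculation: choose $\chi_R$ supported in $B_{2R}(p)$, equal to $1$ on $B_R(p)$, with $\abs{d\chi_R}\le C/R$; pair the equation against $\chi_R^2\phi$; integrate by parts and absorb the cross-term by Cauchy-Schwarz. This yields the annular energy estimate
\begin{equation*}
\int_{B_R(p)}\norm{\nabla^A\phi}^2 \; \le \; \frac{C}{R^2}\int_{B_{2R}(p)\setminus B_R(p)}\norm{\phi}^2\, ,
\end{equation*}
and the goal is to send $R\to\infty$ along a carefully chosen sequence so that the right-hand side vanishes, giving $\nabla^A\phi\equiv 0$.

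The main obstacle is precisely this passage to the limit. Using only \autoref{prop:dichotomy-derivative-of-kappa} together with the sub-polynomial growth inherited from the failure of~(i), the annular $L^2$-norm is bounded by $C\,\kappa^2(2R)\,R^{n-k}$, and dividing by $R^2$ forces the bound to vanish on its own only when $n-k-2\le 0$, i.e.\ only for ALH ($k=3$) in the four-dimensional setting. For the lower values of $k$ I would couple the estimate above with a parallel Pohozaev-type identity for $d_A\phi$: the pointwise Weitzenb\"ock identity together with $\Ric=0$ and the clean cancellation $\langle\phi,\mathrm{curv}(F_A,\phi)\rangle=-\langle F_A,[\phi\wedge\phi]\rangle=0$ gives $d_A^{\hodge}d_A\phi=0$, so the squared $L^2$-norm of $d_A\phi$ on $B_R$ is itself controlled by a boundary flux through $\partial B_R$; the summability $\int_0^\infty\int_{\partial B_r}\norm{d_A\phi}^2\,dr<\infty$ inherited from $F_A\in L^2$ then produces a sequence of radii on which this boundary integral decays. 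Balancing this decay against the sub-polynomial $\phi$-growth along the same sequence, and using \autoref{lem:dichotomy-weighted-frequency-function-bounds} to select radii $R_j$ simultaneously favourable for both estimates, should force $\int_{W^4}\norm{\nabla^A\phi}^2=0$.

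Once $\nabla^A\phi=0$, in particular $d_A\phi=0$, so substituting back into the simplified Kapustin-Witten equations from the first paragraph closes the proof: for $\theta\in(0,\pi)$ both $F_A^+$ and $F_A^-$ vanish, giving flatness, while for $\theta=0$ and $\theta=\pi$ the relevant chirality of $F_A$ was already killed by the simplified KW equations plus $[\phi\wedge\phi]=0$. The hardest step is undoubtedly the subsequential limit in the annular energy estimate on ALE and ALF gravitational instantons, where the high ambient volume growth forces a delicate trade-off between the $L^2$-integrability of $F_A$ and the sub-polynomial growth of $\phi$ provided by Theorem~A.
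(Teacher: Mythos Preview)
Your reduction to $[\phi\wedge\phi]=0$ via Theorem~A and the subsequent simplification of the $\theta$-Kapustin--Witten system are correct, as is the observation that the second-order equation collapses to ${\nabla^A}^\dagger\nabla^A\phi=0$. However, the route you propose for extracting $\nabla^A\phi=0$ has genuine gaps and diverges substantially from the paper's argument.

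First, the pointwise claim $d_A^{\hodge}d_A\phi=0$ is false: the Weitzenb\"ock identity gives $d_A^{\hodge}d_A\phi=\mathcal{R}^{F_A}(\phi)$, and only its \emph{contraction} $\langle\phi,\mathcal{R}^{F_A}(\phi)\rangle=\langle F_A,[\phi\wedge\phi]\rangle$ vanishes. This still yields an integral identity $\int_{B_R}\norm{d_A\phi}^2=\text{(boundary flux)}$, but that flux is controlled by $R^{(3-k)/2}\kappa(R)\bigl(\int_{\partial B_R}\norm{d_A\phi}^2\bigr)^{1/2}$. Integrability of $\norm{d_A\phi}^2$ gives at best a sequence along which $\int_{\partial B_{R_j}}\norm{d_A\phi}^2=o(R_j^{-1})$, hence the flux is $o\bigl(R_j^{1-k/2}\kappa(R_j)\bigr)$; for ALE ($k=0$) and ALF ($k=1$) this does not tend to zero under mere sub-polynomial growth of~$\kappa$. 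Appealing to \autoref{lem:dichotomy-weighted-frequency-function-bounds} to pick favourable radii does not close the gap either: that lemma bounds $N(t)\le a^{1/4}$, i.e.\ $\int_{B_t}\norm{\nabla^A\phi}^2\le a^{1/4}\,t^{2-k}\kappa^2(t)$, but provides no upper bound on $\kappa(t)$, and for $k\le 1$ the factor $t^{2-k}$ diverges with no controlled rate relative to~$a$. Second, and more decisively, for $\theta\in\{0,\pi\}$ the theorem carries \emph{no} $L^2$ hypothesis on $F_A$, so there is no integrability of $d_A\phi$ to feed into your Pohozaev step at all; your argument then collapses to the bare cutoff estimate, which you yourself note fails for $k\le 2$.

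The paper proceeds quite differently and leans on $G=SU(2)$ explicitly. Once $[\phi\wedge\phi]=0$, one writes $\phi=\omega\otimes\sigma$ with $\omega\in\Omega^1(W^4)$ and $\sigma$ a unit section of $\ad E$; the simplified equations force $\nabla^A\sigma=0$ away from the zero set of $\phi$, and Taubes' results on $\Z/2$-harmonic spinors show this zero set has Hausdorff dimension~$2$, so $\sigma$ extends globally by parallel transport. Then $\norm{\omega}^2=\norm{\phi}^2$ is a scalar subharmonic function, and Yau's Liouville theorem forces it to be constant, giving $\nabla^A\phi=0$. For $\theta\in(0,\pi)$ the paper reduces to $\theta=\pi/2$, forms the self-dual connection $\hat A=A+\phi$, and uses Uhlenbeck's compactness theorem to upgrade $F_{\hat A}\in L^2$ to the \emph{pointwise} decay $\norm{F_{\hat A}}\le c\,r^{-(4-k)}$; then Stokes' theorem and the Bianchi identity give $\int_{B_r}\norm{F_{\hat A}}^2=\int_{\partial B_r}\Tr(F_{\hat A}\wedge\phi)\to 0$, whence $\hat A$ is flat and one is back in the previous case. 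The two ingredients your outline is missing are precisely this pointwise (not merely $L^2$) curvature decay for $\theta\in(0,\pi)$, and the passage to a scalar subharmonic problem via the $SU(2)$ decomposition for $\theta\in\{0,\pi\}$.
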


\begin{proof}
Since solutions of the Kapustin-Witten equations also satisfy equation \eqref{eq:dichotomy-main-assumption}, the dichotomy of \autoref{thm:dichotomy} holds.
It remains to show that in the case where $[\phi\wedge\phi]$ is identically zero, also $\nabla^A \phi$ vanishes and $A$ is either (anti-)self-dual or flat as stated.
Hence, assume from now on that $[\phi\wedge\phi]= 0$.
At points where $\phi$ is non-zero the Higgs field can then be written as $\phi = \omega \otimes \sigma$, where $\omega\in\Omega^1(M)$ and $\sigma \in \Gamma(M,\ad E)$, normalized such that $\norm{\sigma} = 1$.

Consider first the case where $\theta=0$ (this also covers the case $\theta = \pi$ by a reversal of orientation).
The Kapustin-Witten equations then state on the one hand $F_A^+ = 0$, so $A$ is anti-self-dual, and on the other hand $(d_A \phi)^-= 0$ and $d_A \hodge \phi = 0$.
The latter two equations can only be satisfied if the constituents of $\phi$ satisfy $\nabla^A \sigma = 0$ and $(d \omega)^- = 0 = d \hodge \omega$.
In particular $\sigma$ is guaranteed to be covariantly constant at points where $\phi\neq 0$.

The zero locus of $\phi$ coincides with the zero locus of $\omega$.
Since the one-form $\omega$ satisfies the first order differential equations from above, it is an example of what Taubes calls a $\Z /2$ harmonic spinor in \cite{Taubes2014}.
In that article he investigates the zero locus of such $\Z /2$ harmonic spinors in general and Theorem 1.3 of \cite{Taubes2014} states that the zero locus has Hausdorff dimension 2.
The relevance for us is that the complement of the zeroes of $\omega$ in any given ball in $W^4$ is path connected.
This means that $\sigma$ can be defined at points where $\omega=0$ by parallel transport along paths where $\omega$ is non-zero.
Since $A$ is smooth and $\sigma$ is $\nabla^A$-parallel, parallel transport along two different paths results in the same value.

Since $\sigma$ is defined everywhere, the same is true for $\omega = \frac{1}{2}\Tr(\phi \sigma)$.
The elliptic differential equations $(d \omega)^- = 0 = d\hodge \omega$ imply that $\norm{\omega}^2$ is subharmonic.
Thus, by a classical result of Yau \cite[Theorem 3 \& Appendix (ii)]{Yau1976}, either $\norm{\omega}^2$ is constant or $\lim_{r\to\infty} r^{-1} \int_{B_r} \norm{\omega}^2 > 0$.
The latter is precluded by our assumptions and we conclude that $\nabla^A \phi =  0$.

In the case that $\theta \not\equiv 0 \pmod{\pi}$, neither of the terms $(d_A \phi)^\pm$ vanishes automatically.
However, if we additionally assume that $F_A$ is square-integrable, we can employ Uhlenbeck's compactness theorem for (anti-)self-dual connections to deduce that $A$ must be flat.
To see this, first note that we can find a coefficient $c(\theta)$ such that the connection $A + c(\theta) \phi$ satisfies the $\pi/2$ version of the Kapustin-Witten equations, so it is sufficient to consider the case $\theta=\pi/2$.

With that in mind and since $[\phi\wedge\phi]=0$, the Kapustin-Witten equations state that $F_A = \hodge d_A \phi$.
As a consequence the connection $\hat A := A+\phi$ is self-dual.
Since $\int_{W^4} \lVert F_{\hat A} \rVert^2 < \infty$, this connection $\hat A$ is the pullback of a smooth, regular connection on the one-point compactification of $W^4$, by Uhlenbeck's compactness theorem.
It follows that at large radius the field strength falls off faster than the volume of geodesic balls grows, i.e. $\lVert F_{\hat A} \rVert \leq \frac{c}{r^{4-k}}$.
Keeping this in mind, note that $\nabla^{\hat A} \phi = \nabla^A \phi$, such that $F_{\hat A} = 2 (d_{\hat A} \phi)^+$ and
\begin{align}
	\int_{B_r} \lVert F_{\hat A}\rVert^2 = \int_{B_r} \Tr F_{\hat A} \wedge d_{\hat A} \phi = \int_{\del B_r} \Tr F_{\hat A} \wedge \phi\ ,
\end{align}
where we have used Stokes' theorem and the Bianchi identity in the last equality.
With the bound on $\lVert F_{\hat A}\rVert$ the integral on the right is bounded by a multiple of $\frac{\kappa}{r}$, which approaches $0$ for $r\to \infty$, so $\hat A$ is flat.
From here we are back in the situation where $(d_A \phi)^+ = 0 = d_A \hodge \phi$ and the same argument as before leads to $\nabla^A \phi = 0$.

\end{proof}

\printbibliography[heading=bibintoc]

\end{document}